\newtheorem{theo}{Theorem}[section]
\newtheorem{lem}[theo]{Lemma}
\newtheorem{cor}[theo]{Corollary}
\theoremstyle{definition}
\theoremstyle{remark}
\newtheorem{rem}[theo]{Remark}
\numberwithin{equation}{section}
\newcommand{\M}{\operatorname{M}}
\newcommand{\de}{\operatorname{d}}
\newcommand{\Z}{\mathbb{Z}}
\newcommand{\R}{\mathbb{R}}
\newcommand{\al}{\alpha}
\newcommand{\be}{\beta}
\newcommand{\Ga}{\Gamma}
\newcommand{\ps}{\psi}
\mathchardef\pFcomma=\mathcode`, 
\newcommand*\pFq[5]{%
  \begingroup
  \begingroup\lccode`~=`,
    \lowercase{\endgroup\def~}{\pFcomma\mkern\pFqskip}%
  \mathcode`,=\string"8000
  {}_{#1}F_{#2}\biggl[\genfrac..{0pt}{}{#3}{#4};#5\biggr]%
  \endgroup
}
\begin{document}

\title{Boundary dents, the arctic circle and the arctic ellipse}

\author{Mihai Ciucu and Christian Krattenthaler\\
  \\ \textnormal {with an appendix by} Michael Larsen}

\address{Indiana University, Department of Mathematics, Bloomington, Indiana 47405, USA}
\address{Universit\"at Wien, Fakult\"at f\"ur Mathematik, Oskar-Morgenstern-Platz 1, A-1090 Wien, Austria}

\thanks{M.C.'s research supported in part by Simons Foundation Collaboration Grant 710477.}

\begin{abstract} The original motivation for this paper goes back to the mid-1990's, when James Propp was interested in natural situations when the number of domino tilings of a region increases if some of its unit squares are deleted. Guided in part by the intuition one gets from earlier work on parallels between the number of tilings of a region with holes and the 2D Coulomb energy of the corresponding system of electric charges, we consider Aztec diamond regions with unit square defects along two adjacent sides. We show that for large regions, if these defects are at fixed distances from a corner, the ratio between the number of domino tilings of the Aztec diamond with defects and the number of tilings of the entire Aztec diamond approaches a Delannoy number.
  When the locations of the defects are not fixed but instead approach given points on the boundary of the scaling limit $S$ (a square) of the Aztec diamonds, we prove that, provided the line segment connecting these points is outside the circle inscribed in $S$, this ratio has the same asymptotics as the Delannoy number corresponding to the locations of the defects; if the segment crosses the circle, the asymptotics is radically different. We use this to deduce (under the assumption that an arctic curve exists) that the arctic curve for domino tilings of Aztec diamonds is the circle inscribed in $S$. We also discuss counterparts of this phenomenon for lozenge tilings of hexagons.
  \end{abstract}

\maketitle

\section{Introduction}

A domino tiling of a region $R$ on the square lattice is a covering of $R$ by dominos (unions of two unit squares that share an edge) so that there are no gaps or overlaps. The question of finding the number of different domino tilings of a given family of regions is one of great relevance in statistical physics --- it is one of the main goals in the dimer model. The case of an $m\times n$ rectangular region $R_{m,n}$ was solved in 1961 independently by Temperley and Fisher~\cite{TF} and by Kasteleyn~\cite{Kast}, who proved that\footnote{ For a region $R$ on the square lattice, we denote by $\M(R)$ the number of domino tilings of $R$.} 
\begin{equation}
\M(R_{m,n})=\prod_{j=1}^{\lceil m/2\rceil} \prod_{k=1}^{\lceil n/2\rceil}
\left(4\cos^2\frac{j\pi}{m+1}+4\cos^2\frac{k\pi}{n+1}\right).
\label{eaa}
\end{equation}

Another natural family of regions on the square lattice consists of the Aztec diamonds. The Aztec diamond of order $n$, denoted $AD_n$, is obtained by stacking $2n$ unit height strips of respective lengths $2,4,6,\dotsc,2n,2n,2n-2,\dotsc,4,2$ so that their centers line up along a common vertical lattice line (see Figure~\ref{fca} for a picture of $AD_8$). In their 1992 paper~\cite{EKLP}, Elkies, Kuperberg, Larsen and Propp proved that the number of domino tilings of $AD_n$ is given by the strikingly simple formula

\begin{equation}
\M(AD_n)=2^{n(n+1)/2}.
\label{eab}
\end{equation}

A natural refinement is to remove some fixed collection of unit squares from a given region~$R$, and consider the number of domino tilings of the leftover region~$R'$ . One can then ask: Is it possible to have
$M(R')>\M(R)$?
For the special case when $R$ is a square region this is believed not to be possible (this was mentioned by James Propp in a 1997 post on the domino email forum).
The current paper grew out of an exploration of this question for Aztec diamonds, and then considering the analogous question for lozenge tilings of hexagons.

Useful intuition can be derived from the first author's earlier work~\cite{sc,ov,ec,ef,gp,tf} on the behavior of the number $\M$ of perfect matchings of a large, fixed, finite subgraph $G$ of the square grid\footnote{Our earlier work also addresses the case of the hexagonal lattice and that of more general planar, two-periodic bipartite lattices.}
with a given collection of gaps, as the gaps move around (this applies just as well for gaps in perfect matchings of a subgraph of the hexagonal lattice, which correspond to gaps in lozenge tilings of regions on the triangular lattice; for brevity, in the following discussion we focus on the square lattice). Our results give strong support for the ``electrostatic conjecture'': as long as the gaps stay far from the boundary (i.e., in the bulk), $\log(\M)$ is given\footnote{ In the limit as $G$ becomes infinitely large, and the separations between the gaps approach infinity.}  by the 2D Coulomb energy of the system of charges obtained by regarding each gap as a point charge of magnitude equal to the number of white vertices minus the number of black vertices in the gap (in a proper coloring\footnote{ I.e., each edge is incident to one black and one white vertex.} of the square grid).

For general regions, the way $\log(\M)$ changes as the gaps move away from the bulk and interact with the boundary turns out to be more conveniently described by the steady-state heat energy of the system obtained by regarding each gap as a heat source or sink (of intensity given by the statistic that defined charge in the previous paragraph) in a uniform block of material having the shape of the region being tiled; see \cite{angle,rangle,tf}. However, in the case of Aztec diamonds (as well as that of hexagons on the triangular lattice), the interaction of the gaps with the boundary can still be understood to a good extent by the more suggestive electrostatic intuition.

Indeed, fix an Aztec diamond $AD_n$ and remove from it two unit squares, one white and one black\footnote{ We are using here the fact that domino tilings of a region~$R$ on the square lattice can be identified with perfect matchings of the planar dual graph of $R$.} (this is necessary in order for the leftover region to admit any domino tiling). Then, according to the electrostatic parallel described above, in the bulk these two gaps will have a tendency to attract (meaning that there are more tilings with them close by than with them far apart), reaching a maximum when they share an edge. However, this way the number of tilings of the region with the two gaps is just the number of tilings of $AD_n$ which contain the resulting domino, so it is not more than $\M(AD_n)$.

More successful in our quest to increase the number of tilings is to realize that the portion of the exterior of $AD_n$ which adjoins its southwestern side is in some sense a huge gap of positive charge\footnote{ We are assuming that the checkerboard coloring was chosen so that the unit squares of the Aztec diamond along its northwestern side are white.} (the same holds for the northeastern side). Therefore, away from the bulk there should be a great tendency of the black gap to be attracted to the southwestern or northeastern side, and similarly for the white gap to be attracted to the southeastern or northwestern side\footnote{ Given that the portions outside the boundary act like huge charges, the attraction of the gaps to the boundary should swamp the mild attraction tendency between the gaps.}. This suggests that a good location for the gaps (if we want to end up with a region that has more tilings than $AD_n$) is for instance to have the black gap glued to the southwestern boundary, and the white one glued to the southeastern boundary --- thus becoming dents instead of gaps.

Even for moderate size Aztec diamonds, with the dents close to the southern corner, we noticed that the ratio between the number of tilings of the dented region and the number of tilings of the entire Aztec diamond is close to being an integer. Then we discovered that the integers which they are close to are actually Delannoy numbers.

We prove this in Theorem~\ref{tba}.
The more general case when the Aztec diamond has $k$ dents along the southwestern side and $k$ along the southeastern one can be deduced from the case $k=1$ using a determinant identity published by Jacobi \cite[Eq.~(XX.4), p.\!\!\!~208]{Muir} (see Corollary~\ref{tcd}).

An interesting question is what happens in the $k=1$ case if the dents, instead of having fixed locations (as in Theorem~\ref{tba}), are positioned so that in the scaling limit their locations approach given points on the boundary of the square $S$, the scaling limit of Aztec diamonds. This led us to the surprising discovery that the asymptotic behavior of the ratio between the number of tilings of the dented and plain Aztec diamonds has two radically different regimes, depending on whether or not the line segment joining the dents crosses the circle inscribed in~$S$. This is stated in Theorem~\ref{tbaa}. Its implications for the arctic circle phenomenon --- including a new derivation (under the assumption that an arctic curve exists) of the fact that the arctic curve for domino tilings of Aztec diamonds is the inscribed circle --- are discussed after its statement, in Remarks~\ref{rem:2} and~\ref{rem:3}.
Sections~3 and~4 contain the proofs of our results, and also the explicit asymptotics of the ratio between the number of tilings of the dented Aztec diamond and the number of tilings of the plain Aztec diamond (see Theorem~\ref{tdb}). A surprising consequence of our explicit formulas is that in the case of $k$ dents on each of the bottom two sides of the Aztec diamond, provided all segments connecting dents on different sides cross the inscribed circle, the joint correlation of the dents is determined by the individual interactions of the dents with the corners
of the Aztec diamond; this is detailed in Theorem~\ref{tde},
Corollary~\ref{tdf} and Remark~\ref{rem:7}.

In Section~5 we consider the same questions for lozenge tilings of hexagons. We prove that the same phenomenon holds (with the ellipse inscribed in the hexagon playing the role of the circle inscribed in $S$), and find that the corresponding ratios approach binomial coefficients.

Our proofs of the results about domino tilings are essentially self-contained,
while those of the results about lozenge tilings are based on a counting formula
due to the first author and Fischer~\cite{boundarydents}.
They imply new versions of the arctic circle theorem for domino tilings of
the Aztec diamond
(see~\cite{JPS,CEP}) and of the analogous arctic curve result for lozenge tilings
of a hexagon (see~\cite{CLP}). Our new version of the arctic circle theorem
concerns Aztec diamonds with two dents on adjacent sides.
It states that, provided the line segment connecting the dents is outside
the inscribed circle, the arctic curve for domino tilings of such dented
Aztec diamonds is the union of a circle and a line segment (see
Theorem~\ref{tbc}). The analogous version for lozenge tilings of a similarly
dented hexagon says that the arctic curve is the union of an ellipse and
a line segment (see Remark~\ref{rem:8}). Both of these results need a
kind of ``folklore" fact from probability theory that says that lattice paths
from the origin (say) to a ``far away" point~$w$ are with high probability
close to the straight line segment connecting the origin and~$w$.
Since it seems that such a result has never been written down except in
special cases, the Appendix written by Michael Larsen provides a precise
statement and proof of that fact.

In view of the well-known interpretation of domino tilings (resp., lozenge tilings) as families of non-intersecting Delannoy paths, our results imply that, provided the line segment connecting the dents is outside the inscribed circle (resp., the inscribed ellipse), the path connecting the defects is asymptotically independent from the other paths in the family of non-intersecting lattice paths encoding the tiling.
For a study (from a different viewpoint) of dented hexagons in which the dents are not on adjacent sides but on alternating sides, see Condon's paper~\cite{Daniel}.

Our derivation of the fact that the arctic curve for domino tilings of Aztec diamonds is the inscribed circle (under the assumption that an arctic curve exists), and the analogous result for the ellipse inscribed in the hexagon, is very reminiscent of
the tangent method of Colomo and Sportiello~\cite{CS}. More precisely, these fit the framework of what Sportiello calls the ``2-refined tangent method'' in \cite[p.~33]{Sportiello}.
Our arguments are direct and completely self-contained.

After posting our paper on \texttt{arxiv.org}, the closely related work \cite{DR} by Debin and Ruelle (which we were not aware of when we wrote our paper) was pointed out to us by its second author. Indeed, in \cite{DR} a parallel analysis is carried out for the Aztec diamond (lozenge tilings of hexagons, the other example we study, are not considered in \cite{DR}). However, there is a crucial difference between the families of Delannoy paths considered by us and those in \cite{DR}: while the outermost path in our case connects two dents in the Aztec diamond, in \cite{DR} it connects two vertical segments on the sides of the $90^\circ$ wedge which contains the Aztec diamond --- and its starting and ending points are always {\it outside} the Aztec diamond itself.
Another difference is that our set-up allows us to deduce from our result the scaling limit of a family of non-intersecting Delannoy paths of the type resulting from domino tilings of Aztec diamonds, but having one starting point and one ending point removed (see Remark \ref{gaps}). The fact that this scaling limit is again determined by the line segment joining the removed starting and ending points seems to be a new and surprising result.
The two approaches thus complement each other, and the fact that they both lead to the same phenomenon is a satisfying confirmation of the 2-refined tangent method in this case. 

Two other points of contact with the previous literature are the work~\cite{BK} of Bufetov and Knizel and the paper~\cite{DFG} of Di Francesco and Guitter, in which the arctic curve for Aztec rectangles with dents along one side is determined.

\section{Main results}

For any $1\leq i,j\leq n$, let $AD_n^{i,j}$ be the region obtained from the Aztec diamond $AD_n$ by removing the $i$th unit square on its southwestern boundary and the $j$th unit square on its southeastern boundary (both counted from bottom to top); $AD_8^{2,4}$ is shown in Figure~\ref{fba}.

After some experimentation with the {\tt vaxmacs} program%
\footnote{This program, implemented by David B. Wilson, allows one to compute (among other things) the number of domino tilings of regions on the square lattice.
}
one quickly finds that for small values of $i$ and $j$, already for moderately large $n$ the ratio $\M(AD_n^{i,j})/\M(AD_n)$ is quite close to an integer. For instance, $\M(AD_{20}^{2,3})/\M(AD_{20})\approx 4.996$. On the other hand, 5 is the Delannoy number $D(1,2)$, where 
$D(k,l)$ is defined to be the number of paths on $\Z^2$ from $(0,0)$ to $(k,l)$ using only steps $(1,0)$, $(0,1)$ or $(1,1)$.

This turns out to hold in general.

\begin{theo}
\label{tba}
For any $1\leq i,j\leq n$, we have:

$(${\rm a}$)$. 
\begin{equation}
\frac{\M(AD_n^{i,j})}{\M(AD_n)}=\sum_{k=0}^{n-1}{\binom k {i-1}}{\binom k {j-1}}\frac{1}{2^{k+1}}.
\label{eba}
\end{equation}

$(${\rm b}$)$. 
\begin{equation}
\lim_{n\to\infty}\frac{\M(AD_n^{i,j})}{\M(AD_n)}=D(i-1,j-1),
\label{ebb}  
\end{equation}
where $D(k,l)$ is the Delannoy number, defined to be the number of paths on $\Z^2$ from $(0,0)$ to $(k,l)$ using only steps $(1,0)$, $(0,1)$ or $(1,1)$.


\end{theo}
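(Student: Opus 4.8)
The plan is to route everything through non-intersecting lattice paths together with the Lindström--Gessel--Viennot (LGV) lemma, and thereby reduce part~(a) to the inversion of a Delannoy matrix; part~(b) then falls out by letting $n\to\infty$ and a short generating-function computation.

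First I would invoke the standard bijection between domino tilings of $AD_n$ and $n$-tuples of non-intersecting Delannoy paths: the $r$-th path $\gamma_r$ starts at a lattice point $S_r$ attached to the $r$-th unit square of the southwestern boundary and ends at a point $T_r$ attached to the $r$-th unit square of the southeastern boundary (both counted from the bottom corner), the paths use steps $(1,0)$, $(0,1)$, $(1,1)$ in suitable coordinates, and they are pairwise non-intersecting. The decisive feature of this set-up is that passing from $AD_n$ to $AD_n^{i,j}$ --- i.e.\ deleting the $i$-th southwestern and the $j$-th southeastern unit square --- corresponds precisely to deleting the source $S_i$ and the sink $T_j$; planarity and the non-intersecting condition then force the surviving $n-1$ paths to join $\{S_r:r\neq i\}$ to $\{T_r:r\neq j\}$ in the unique order-preserving way. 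I expect \emph{this} step to be the main obstacle: one has to pin down the bijection, check that the boundary deletions create no uncontrolled forced dominoes, and compute the individual path counts. Everything afterwards is linear algebra and routine asymptotics.

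Let $N=(N_{rs})_{1\le r,s\le n}$ with $N_{rs}$ the number of Delannoy paths from $S_r$ to $T_s$; a direct count gives $N_{rs}=2\,D(r-1,s-1)$, where $D$ is the Delannoy number. By the LGV lemma, $\M(AD_n)=\det N$ and $\M(AD_n^{i,j})=\det N_{\hat\imath,\hat\jmath}$, the $(n-1)\times(n-1)$ minor of $N$ obtained by deleting row $i$ and column $j$. Using $D(a,b)=\sum_{\ell\ge0}\binom a\ell\binom b\ell 2^{\ell}$ one factors $N=2\,P\,\Delta\,P^{\mathsf T}$, where $P=\bigl(\binom{r-1}{\ell-1}\bigr)_{r,\ell}$ is the unitriangular Pascal matrix and $\Delta=\operatorname{diag}(2^{0},2^{1},\dots,2^{n-1})$; hence $\det N=2^{n}\cdot 2^{\,0+1+\dots+(n-1)}=2^{n(n+1)/2}$, re-deriving \eqref{eab}. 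For part~(a), combine Cramer's rule $\det N_{\hat\imath,\hat\jmath}/\det N=(-1)^{i+j}(N^{-1})_{ji}$ with $N^{-1}=\tfrac12(P^{-1})^{\mathsf T}\Delta^{-1}P^{-1}$ and $(P^{-1})_{r,\ell}=(-1)^{r-\ell}\binom{r-1}{\ell-1}$, to obtain
\[
\frac{\M(AD_n^{i,j})}{\M(AD_n)}=(-1)^{i+j}(N^{-1})_{ji}=\sum_{k=0}^{n-1}\binom{k}{i-1}\binom{k}{j-1}\frac1{2^{k+1}},
\]
which is \eqref{eba}.

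For part~(b) I would let $n\to\infty$ in \eqref{eba}. The series $\sum_{k\ge0}\binom{k}{i-1}\binom{k}{j-1}2^{-(k+1)}$ converges absolutely (the ratio of consecutive terms tends to $\tfrac12$), so the partial sums converge to it, and it only remains to identify the sum with $D(i-1,j-1)$. This is quickest via generating functions: $\sum_{a,b\ge0}\bigl(\sum_{k\ge0}\binom ka\binom kb 2^{-(k+1)}\bigr)x^{a}y^{b}=\tfrac12\sum_{k\ge0}\bigl(\tfrac{(1+x)(1+y)}2\bigr)^{k}=\frac1{1-x-y-xy}=\sum_{a,b\ge0}D(a,b)\,x^{a}y^{b}$, the last equality being the defining generating function of the Delannoy numbers; comparing coefficients yields $\sum_{k\ge0}\binom ka\binom kb 2^{-(k+1)}=D(a,b)$ and hence \eqref{ebb}. (The $k$-dent refinement announced after the statement will then follow by applying Jacobi's complementary-minor identity to $N$, whose inverse has just been made explicit.)
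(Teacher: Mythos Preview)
Your algebraic engine---the factorization of the Delannoy matrix as $P\,\Delta\,P^{\top}$ with $P$ the lower-triangular Pascal matrix and $\Delta=\operatorname{diag}(2^{0},\dots,2^{n-1})$, the explicit inverse, and Cramer's rule---is exactly what the paper uses (its Theorem~\ref{tca} and Lemma~\ref{tcc}). Part~(b) is likewise the same: the identity $D(a,b)=\sum_{k\ge0}\binom ka\binom kb\,2^{-(k+1)}$ that you derive via generating functions is simply quoted in the paper as equation~\eqref{ebn}.

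The gap is precisely the step you yourself flag. You posit an $n$-path Delannoy encoding of $AD_n$ in which (i)~removing the two boundary unit squares deletes source $S_i$ and sink $T_j$, and (ii)~the \emph{unrestricted} path count from $S_r$ to $T_s$ equals $2D(r-1,s-1)$. Statement~(i) does hold in the ``shifted'' encoding the paper records in Remark~\ref{gaps}, but~(ii) is not justified, and the unexplained factor of~$2$ is a warning sign: for Delannoy steps on the ambient lattice the single-path counts between lattice points are plain Delannoy numbers. The underlying obstruction is that once you drop the auxiliary bottom point $u_0=v_0$ from the paper's $(n{+}1)$-point set-up (which is what makes its LGV matrix exactly $D_{n+1}$), non-intersecting Delannoy families on the full lattice are no longer forced to stay inside $AD_n$, so the LGV determinant over unrestricted paths need not equal $\M(AD_n)$. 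Your $\det N=2^{n(n+1)/2}$ and its minors do return the right numbers, but that is a \emph{consequence} of the theorem rather than a proof of it.

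The paper closes this gap by a different combinatorial detour. It keeps the $(n{+}1)$-point Delannoy encoding with LGV matrix $D_{n+1}$; in that encoding, removing the two boundary unit squares does \emph{not} delete a source and sink (it adds an extra path joining the dents). Instead the paper passes to the region $\overline{AD}_n^{\,i,j}$ obtained by \emph{adding} two unit squares just outside the boundary; for that region the added squares genuinely delete a source and a sink, so LGV gives $\M(\overline{AD}_n^{\,i,j})=\det (D_{n+1})_{\{i\}}^{\{j\}}$. The link back to the dented Aztec diamond is then the complementation theorem of~\cite{CT}, which yields $\M(AD_{n+1}^{\,i+1,j+1})=2^{\,n}\,\M(\overline{AD}_n^{\,i,j})$. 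That external ingredient is exactly where the stray factor of~$2$ you were hunting for actually lives.
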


\begin{rem} \label{rem:1}
We note that another expression for $\M(AD_n^{i,j})$ (equivalent to the one given in part (a) above) was proved by Saikia in \cite[Proposition~4.9]{Saikia}. The approach in~\cite{Saikia} is to first
come up with the expression,
    and then prove  it by induction, using the graphical condensation method of Kuo~\cite{Kuo}. By contrast, we prove our formula directly, using a bafflingly simple factorization of the Delannoy matrix that ought to be better known (this is presented at the beginning of Section~3).
\end{rem}
    
\medskip
By Theorem~\ref{tba}(b),
$\M(AD_n^{i,j})$ and  $\M(AD_n)\cdot D(i-1,j-1)$ have the same $n\to\infty$ asymptotics for fixed $i$ and $j$. It is natural to ask what happens if $i$ and $j$ are allowed to grow with $n$. The answer is given by the following surprising result.

\begin{theo}
\label{tbaa}
Let $C$ be the circle inscribed in the unit square.
Then as $n,i,j\to\infty$ so that $i/n\to a$ and $j/n\to b$, where $0<a,b<1$, we have 
\begin{equation}
\lim_{n\to\infty}\frac{\M(AD_n^{i,j})}{\M(AD_n) D(i-1,j-1)}=
\begin{cases}
  1,  & \text{\rm if the segment $[(a,0),(0,b)]$ is outside $C$}, \\
  0,  & \text{\rm if the segment $[(a,0),(0,b)]$ crosses $C$}.
\end{cases}
\label{ebbbb}  
\end{equation}

\end{theo}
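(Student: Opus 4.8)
The starting point is formula \eqref{eba} of Theorem~\ref{tba}(a), which expresses the ratio $\M(AD_n^{i,j})/\M(AD_n)$ as the partial sum
\[
S_n(i,j):=\sum_{k=0}^{n-1}\binom{k}{i-1}\binom{k}{j-1}\frac{1}{2^{k+1}}.
\]
Dividing by the Delannoy number $D(i-1,j-1)$ and using that the full sum $\sum_{k\ge 0}\binom{k}{i-1}\binom{k}{j-1}2^{-(k+1)}$ equals $D(i-1,j-1)$ (this is the $n\to\infty$ limit in part (b)), the quantity in \eqref{ebbbb} becomes
\[
\frac{S_n(i,j)}{D(i-1,j-1)}=1-\frac{1}{D(i-1,j-1)}\sum_{k\ge n}\binom{k}{i-1}\binom{k}{j-1}\frac{1}{2^{k+1}}.
\]
Thus the whole problem reduces to the asymptotic comparison of the \emph{tail} $\sum_{k\ge n}\binom{k}{i-1}\binom{k}{j-1}2^{-(k+1)}$ with the full sum $D(i-1,j-1)$, as $i\sim an$, $j\sim bn$. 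The plan is to locate, via a Laplace/saddle-point analysis, the value $k=k^{*}$ at which the summand $f(k):=\binom{k}{i-1}\binom{k}{j-1}2^{-k}$ is maximized, and then decide whether $k^{*}$ lies below or above the cutoff $n$: if $k^{*}<n$ the tail is exponentially negligible and the limit is $1$; if $k^{*}>n$ the tail captures essentially the whole mass and the limit is $0$. The threshold $k^{*}=n$ should translate, after simplification, precisely into the geometric condition that the chord $[(a,0),(0,b)]$ is tangent to the inscribed circle $C$.

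Concretely, I would write $k=tn$ with $t\in(0,\infty)$ and use Stirling to get $\frac{1}{n}\log f(tn)\to \varphi(t)$ for an explicit entropy-type function
\[
\varphi(t)=t\log t-(t-a)\log(t-a)-(t-b)\log(t-b)+a\log a+b\log b-t\log 2,
\]
valid for $t>\max(a,b)$ (and $\varphi(t)=-\infty$ otherwise, since the binomials vanish). Setting $\varphi'(t)=0$ gives $t^{2}=2(t-a)(t-b)$, i.e.\ $t^{2}-2(a+b)t+2ab=0$, whose relevant root is $t^{*}=(a+b)+\sqrt{a^{2}+b^{2}}$; since this coincides with the known expression for the Delannoy asymptotics, the sum $D(i-1,j-1)$ itself has exponential growth rate $e^{n\varphi(t^{*})}$ (up to a polynomial factor coming from the Gaussian integral around $t^{*}$). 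Comparing $t^{*}$ with $1$: one has $t^{*}<1 \iff \sqrt{a^{2}+b^{2}}<1-(a+b) \iff a^{2}+b^{2}<1-2(a+b)+(a+b)^{2}$, which rearranges to $ab< \tfrac12\big(1-(a+b)\big)\cdot\text{(something)}$; I expect it to simplify exactly to the statement that the line $x/a+y/b=1$ has distance from the center $(\tfrac12,\tfrac12)$ greater than $\tfrac12$, i.e.\ the segment is outside $C$. (The boundary case $t^{*}=1$ corresponds to tangency and is excluded by the strict hypotheses, though one should note which side the degenerate case falls on.)

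With $t^{*}$ and $\varphi$ in hand, the argument splits cleanly. If $t^{*}<1$: the summand $f(k)$ is unimodal with peak at $k\approx t^{*}n<n$, so $\sum_{k\ge n}f(k)\le \text{(poly)}\cdot e^{n\varphi(1)}$ while $D(i-1,j-1)\asymp \text{(poly)}\cdot e^{n\varphi(t^{*})}$ with $\varphi(1)<\varphi(t^{*})$ by strict concavity of $\varphi$ near its max; hence the tail-to-total ratio is $O(e^{-cn})$ and the limit in \eqref{ebbbb} is $1$. If $t^{*}>1$: the peak lies above the cutoff, so the truncated sum $S_n(i,j)=\sum_{k<n}f(k)/2$ misses the dominant terms — more precisely $S_n(i,j)\le \text{(poly)}\cdot e^{n\varphi(1)}$ whereas $D(i-1,j-1)\asymp \text{(poly)}\cdot e^{n\varphi(t^{*})}$ with $\varphi(1)<\varphi(t^{*})$ — so $S_n(i,j)/D(i-1,j-1)\to 0$. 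In both regimes one needs the standard local central limit / Laplace estimate that the sum over a range is controlled by its largest term times a polynomial factor, together with monotonicity of $\varphi$ on each side of $t^{*}$ to guarantee $\varphi(1)<\varphi(t^{*})$ strictly whenever $t^{*}\ne 1$.

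The main obstacle I anticipate is purely bookkeeping rather than conceptual: carrying out the Stirling estimates uniformly in $i,j$ as they drift with $n$ (the binomials $\binom{k}{i-1}$ degenerate when $k$ is close to $i-1$, so the approximation $\varphi$ must be handled with care near $t=\max(a,b)$), and then verifying that the algebraic inequality $t^{*}\lessgtr 1$ is genuinely equivalent to the chord $[(a,0),(0,b)]$ being outside or crossing $C$ — i.e.\ computing the distance from $(\tfrac12,\tfrac12)$ to the line $bx+ay=ab$, which is $\dfrac{|ab-\tfrac12(a+b)|}{\sqrt{a^{2}+b^{2}}}$, and checking this exceeds $\tfrac12$ exactly when $(a+b)+\sqrt{a^{2}+b^{2}}<1$. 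Once that identification is pinned down, the dichotomy in \eqref{ebbbb} follows immediately from the exponential separation of growth rates. (An alternative, possibly cleaner, route avoids Stirling altogether: interpret $S_n(i,j)$ probabilistically — $f(k)2^{-1}$ is, up to the Delannoy normalization, the distribution of a sum related to negative-binomial waiting times — and apply a large-deviation/Chernoff bound with rate function the Legendre dual of $\log$ of the probability generating function; the event $\{k\ge n\}$ then has probability tending to $0$ or $1$ according to whether $n$ exceeds or falls short of the mean $t^{*}n$. I would likely present the Laplace version as primary and remark on the probabilistic reading.)
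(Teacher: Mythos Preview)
Your approach is essentially identical to the paper's: it too reduces Theorem~\ref{tbaa} to Theorem~\ref{tba}(a) together with the infinite-sum expression~\eqref{ebn} for $D(i-1,j-1)$, then locates the peak of the summand via Stirling at $t^*=a+b+\sqrt{a^2+b^2}$ and decides the dichotomy by whether $t^*\lessgtr 1$, which it records as $(1-a)(1-b)\gtrless 1/2$ (your distance computation gives the same inequality). One minor slip: your displayed $\varphi(t)$ should read $2t\log t-(t-a)\log(t-a)-(t-b)\log(t-b)-a\log a-b\log b-t\log 2$; since you nevertheless write the correct critical equation $t^2=2(t-a)(t-b)$, this is evidently just a transcription error.
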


\begin{figure}[t]
\vskip0.3in
\centerline{
\hfill
{\includegraphics[width=0.15\textwidth]{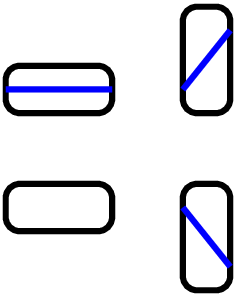}}
\hfill
}
\caption{\label{fbaa} Four types of marked dominos.}
\vskip0.3in
\centerline{
\hfill
{\includegraphics[width=0.43\textwidth]{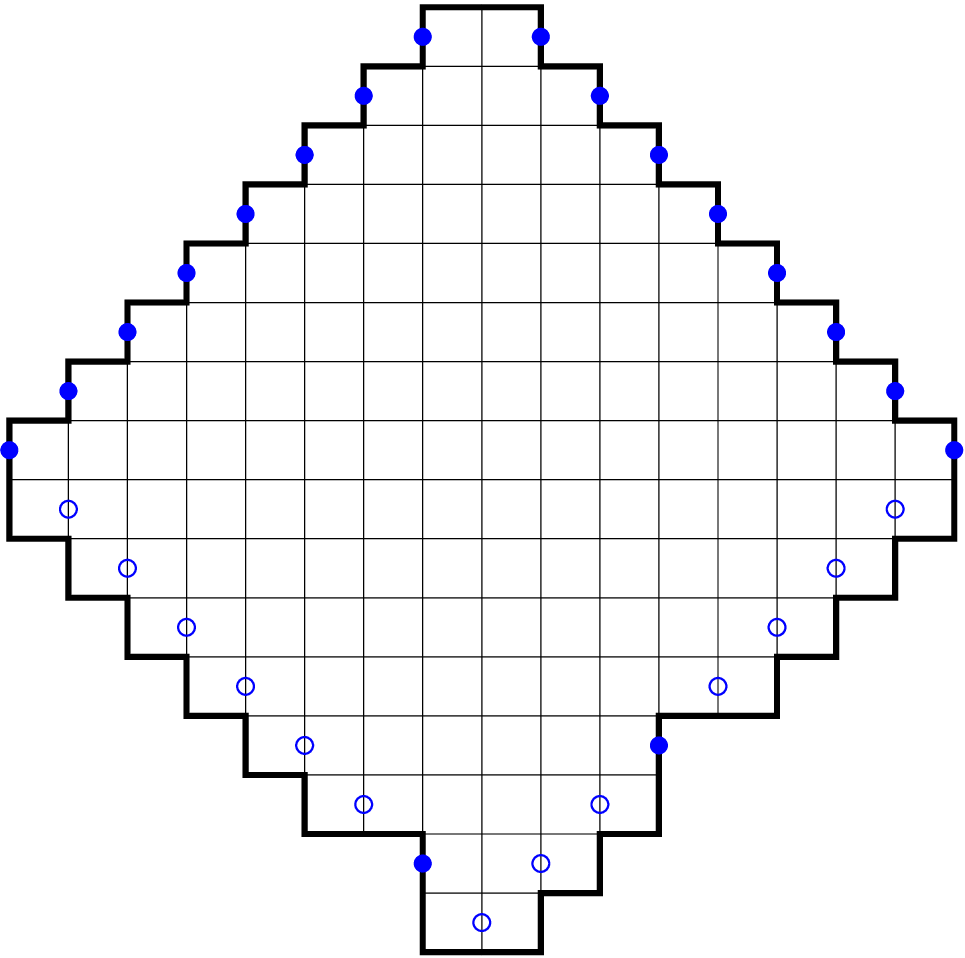}}
\hfill
{\includegraphics[width=0.43\textwidth]{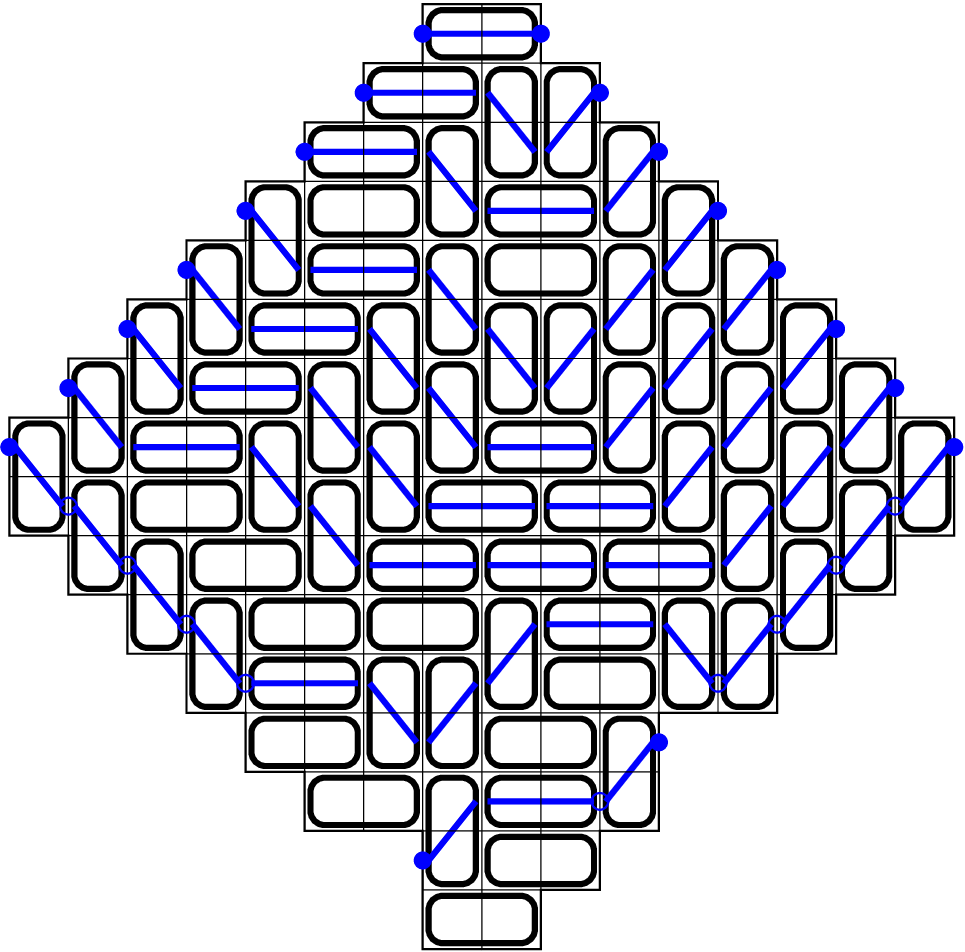}}
\hfill
}
\caption{\label{fba} The dented Aztec diamond $AD_{n}^{i,j}$ for $n=8$, $i=2$, $j=4$: Marked points on the boundary (left); the family of non-intersecting Delannoy paths that connect the marked points corresponding to a domino tiling (right).}
\end{figure}

\begin{rem} \label{rem:2}
Domino tilings of a simply connected region~$R$ on the square lattice are well-known to be in bijection with families of non-intersecting Delannoy paths having certain fixed starting and ending points on the boundary of~$R$ (see~\cite{LRS}). This bijection works as follows. Mark the midpoint $v$ of a vertical unit lattice segment on the boundary of $R$, and mark also the midpoints of all vertical unit lattice segments that can be obtained from $v$ by translating it by the vector $(x,y)$, where $x,y\in\Z$ and $x+y$ is even\footnote {Note that depending on the choice of $v$, there are two different possible $(\sqrt{2}\Z)\times(\sqrt{2}\Z)$ lattices that can result in this way; choosing one or the other can make a crucial difference in a given situation.}. Consider a domino tiling $T$ of the region~$R$. Then there is a unique way of marking the dominos in one of the four ways shown in Figure~\ref{fbaa} such that the marked points on their boundaries agree among each other and with the marked points on the boundary of $R$. This creates a family of non-intersecting Delannoy paths connecting the marked points on the boundary of $R$. For one domino tiling of the dented Aztec diamond $AD_8^{2,4}$ this correspondence is illustrated in Figure~\ref{fba}.

For the region $AD_n^{i,j}$, this implies that its tilings are in bijection with families of $n+1$ non-intersecting Delannoy paths, the first of which connects the two dents, and the last $n$ of which encode a domino tiling of the full Aztec diamond $AD_n$.

By Theorem~\ref{tbaa}, as long as the segment $[(a,0),(0,b)]$ is outside the circle $C$ inscribed in the unit square, the number of such families of $n+1$ non-intersecting paths is asymptotically the same as the number of {\it independent} pairs $(P,\mathscr P)$, where $P$ is a Delannoy path connecting the two dents, and $\mathscr P$ is a family of $n$ non-intersecting Delannoy paths encoding a tiling of the full Aztec diamond $AD_n$. Therefore, in the limit as $n\to\infty$, sampling uniformly at random from the former set is the same as sampling uniformly at random from the latter set --- which amounts to a pair consisting of a Delannoy path $P$ connecting the two dents chosen uniformly at random, and an $n$-tuple $\mathscr P$ of Delannoy paths corresponding to a domino tiling of $AD_n$ chosen uniformly at random (and independently of $P$).

\begin{figure}[t]
\vskip0.2in
\centerline{
\hfill
{\includegraphics[width=0.65\textwidth]{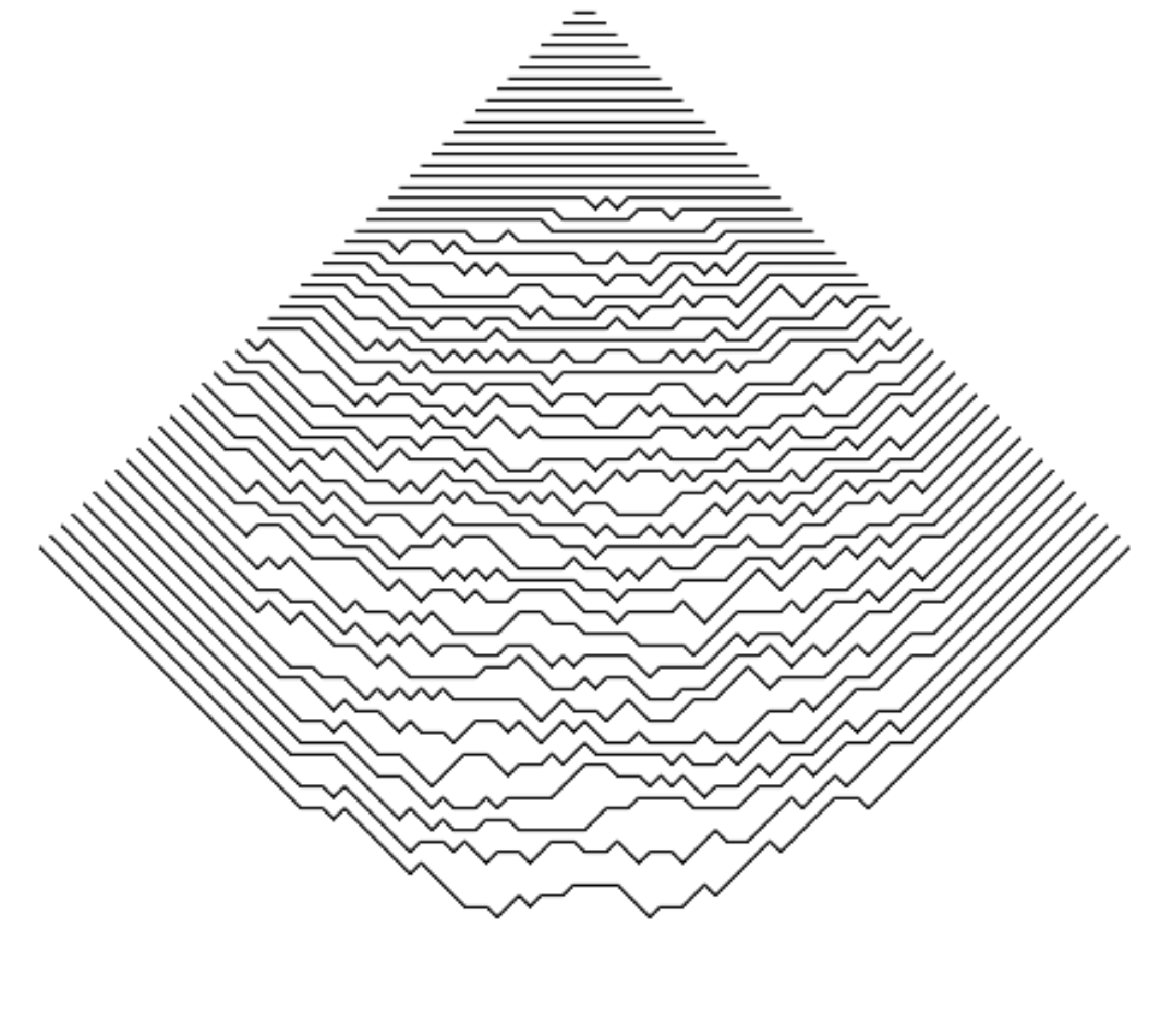}}
\hfill
}
\caption{\label{fbb} The family of non-intersecting Delannoy paths corresponding to a typical domino tiling of $AD_{50}$.}
\end{figure}

The typical form of the $n$-tuple $\mathscr P$ of paths follows by the arctic circle theorem~\cite{JPS,CEP}; see Figure~\ref{fbb} (this figure was produced using Antoine Doeraene's Aztec Diamond Generator found at {\tt https://sites.uclouvain.be/aztecdiamond/}; see also~\cite{JRV}). The typical Delannoy path $P$ connecting the two dents approaches in the limit the segment\footnote{ See $(ii)$ in Remark~\ref{rem:3} for a precise statement. Its proof is provided in the Appendix, which is due to Michael Larsen.
} $[(a,0),(0,b)]$.
Therefore, a corollary of Theorem~\ref{tbaa} is that, provided $[(a,0),(0,b)]$ is outside the circle $C$, in the scaling limit the family of paths encoding a typical domino tiling of $AD_n^{i,j}$ (where $i/n\to a$, $j/n\to b$) looks like the family of paths encoding a typical domino tiling of the undented Aztec diamond, with an additional path that in the limit becomes the line segment $[(a,0),(0,b)]$. This proves then the following new variant of the arctic circle phenomenon.
\end{rem}

\begin{theo}
\label{tbc}
Consider the scaling limit of the dented Aztec diamonds $AD_n^{i,j}$ as $n,i,j\to\infty$ so that $i/n\to$ and $j/n\to b$, and let $S$ denote the scaling limit of the boundary. Then, provided the line segment $s=[(a,0),(0,b)]$ is outside the circle $C$ inscribed in $S$, the arctic curve for domino tilings of the dented Aztec diamonds $AD_n^{i,j}$ is\footnote{ I.e., with probability approaching 1 as $n\to\infty$, around each point $u\in S\setminus(\overline{int(C)}\cup s)$, all the dominos in the tiling have the same type as the domino that fills the corner of the Aztec diamond which is closest to $u$, and $\overline{int(C)}\cup s$ is the maximal set with this property (two dominos are said to have the same type if one is obtained from the other by a translation which preserves the checkerboard coloring of the square lattice).} $C\cup s$.
\end{theo}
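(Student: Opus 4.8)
\emph{Proof idea.} The plan is to reduce the statement, via Theorem~\ref{tbaa}, to the corresponding facts for an undented Aztec diamond and for a single long Delannoy path. By Remark~\ref{rem:2}, a domino tiling of $AD_n^{i,j}$ is the same thing as a non-intersecting pair $(P,\mathscr P)$, where $P$ is a Delannoy path joining the two dents and $\mathscr P$ is an $n$-tuple of non-intersecting Delannoy paths encoding a domino tiling of $AD_n$. Forgetting the non-intersection requirement embeds the set of such pairs into the set of \emph{all} pairs $(P,\mathscr P)$, whose cardinality is exactly $D(i-1,j-1)\cdot\M(AD_n)$ --- the number of Delannoy paths between the two dents (the source of the Delannoy number in Theorem~\ref{tba}) times the number of domino tilings of $AD_n$. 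Since the segment $s$ lies outside $C$, Theorem~\ref{tbaa} gives $\M(AD_n^{i,j})/(D(i-1,j-1)\,\M(AD_n))\to 1$; that is, a uniformly random pair $(P,\mathscr P)$ is non-intersecting with probability tending to~$1$. Consequently the uniform measure on tilings of $AD_n^{i,j}$ --- which is the uniform measure on all pairs conditioned on the (asymptotically certain) non-intersection event --- has total variation distance $o(1)$ from the \emph{product} of the uniform measure on Delannoy paths joining the dents and the uniform measure on domino tilings of $AD_n$. So it suffices to prove the two halves of the assertion --- that around each $u\in S\setminus(\overline{int(C)}\cup s)$ all dominos are of the corner type with probability $\to1$, and that $\overline{int(C)}\cup s$ is maximal with this property --- under this product measure, with $P$ and $\mathscr P$ independent and uniform.

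Under the product measure the two components can be handled separately. Since $\mathscr P$ is a uniform domino tiling of $AD_n$, the arctic circle theorem~\cite{JPS,CEP} tells us that for $u\notin\overline{int(C)}$, with probability $\to1$ every domino of $\mathscr P$ in a fixed window around $nu$ is frozen of the corner type, whereas for $u\in int(C)$ such a window fails to be corner-frozen with probability bounded away from~$0$ ($u$ being in the liquid region). Since $P$ is a uniform Delannoy path between two lattice points at distance $\Theta(n)$, the result proved in the Appendix shows that $\tfrac1n P$ converges in probability, in the Hausdorff metric, to the straight segment joining the rescaled endpoints --- which, under the affine identification of the path lattice with $S$, is precisely $s$. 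For $u\notin\overline{int(C)}\cup s$ we therefore have, with probability $\to1$, that $\mathscr P$ is corner-frozen near $nu$ \emph{and} $P$ stays within $o(n)$ of $s$, hence outside a fixed neighbourhood of $nu$; the pair $(P,\mathscr P)$ then encodes a tiling of $AD_n^{i,j}$ that near $nu$ agrees with the corner-frozen pattern. Transporting this through the total variation bound of the preceding paragraph proves the first half of the statement and, in particular, that the arctic curve is contained in $C\cup s$.

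For the maximality half I would show that freezing fails, with probability bounded away from $0$, at each point of $int(C)$ and at each point of the relative interior of $s$. If $u\in int(C)$, then $u\notin s$ because $s$ misses $\overline{int(C)}$ by hypothesis, so $P$ avoids a fixed neighbourhood of $nu$ with probability $\to1$ and the encoded tiling agrees near $nu$ with $\mathscr P$, which as noted above is non-frozen there. If $u$ lies in the relative interior of $s$, then the Hausdorff convergence forces $P$ to cross every fixed window around $nu$ with probability $\to1$; and since a uniform Delannoy path between endpoints whose two coordinate displacements are both of order $n$ carries a positive density of at least two of its three step types, the portion of $P$ inside such a window necessarily contains dominos that are not of the corner type, so the window is not corner-frozen. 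Carrying these conclusions back to $AD_n^{i,j}$ via the total variation bound identifies $\overline{int(C)}\cup s$ as exactly the closure of the set where freezing fails; hence the arctic curve is $C\cup s$.

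The step I expect to cause the most trouble is not any of the geometry but the two ``transfer'' moves: converting the enumerative identity of Theorem~\ref{tbaa} into the total variation estimate, and verifying that the extra path $P$ genuinely disrupts the corner-frozen pattern throughout the window it traverses --- so that $s$, and not merely $C$, appears in the arctic curve. The underlying fact that a long uniform Delannoy path hugs the straight segment joining its endpoints is ``folklore'' but is used in an essential way here, which is exactly why it is isolated and proved carefully in the Appendix.
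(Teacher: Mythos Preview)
Your proposal is correct and follows essentially the same route as the paper's own argument (given in Remark~\ref{rem:2} immediately preceding the theorem): use Theorem~\ref{tbaa} to show that the uniform measure on tilings of $AD_n^{i,j}$ is asymptotically the product of the uniform measures on a single Delannoy path $P$ and on a tiling $\mathscr P$ of $AD_n$, then invoke the arctic circle theorem for $\mathscr P$ and the Appendix result for $P$. Your write-up is in fact more explicit than the paper's about the total-variation transfer and about the maximality direction (checking that freezing fails both inside $C$ and along $s$), points the paper leaves largely implicit.
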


\begin{rem} \label{gaps}
  Tilings of the dented Aztec diamond $AD_n^{i,j}$ can also be encoded by non-intersecting Delannoy paths in a different way. Namely, instead of the points marked in Figure \ref{fba}, mark the points obtained from them by translating them one unit down. Note that after this translation, only the bottom four of the original $2n+2$ starting and ending points remain on the boundary of $AD_n^{i,j}$ (the others are moved to the interior), while $2n-6$ of the marked points that were originally in the interior end up on the lower half of the boundary. These $2n-2$ points are the starting and ending points of a new familly of $n-1$ non-intersecting Delannoy paths, which also encodes the tiling (this new family is obtained as before, by placing on the dominos the markings of Figure \ref{fbaa} so that the endpoints of the markings agree with each other and with the above-described $2n-2$ marked points on the boundary).

This new family of non-intersecting Delannoy paths is almost identical to one corresponding to the plain Aztec diamond $AD_n$ --- the only difference is that the $i$th starting point and the~$j$th ending point are removed. It is an interesting (and seemingly hard) question to ask: What will be the effect of these two gaps in the starting and ending points on the typical shape of such a family of non-intersecting paths?

 Since these families of paths are in one to one correspondence with tilings of $AD_n^{i,j}$, which in turn are in one to one correspondence with the families of non-intersecting lattice paths considered in Remark \ref{rem:2}, the answer to this question follows by Theorem \ref{tbc}: the typical paths look like what one obtains by reflecting Figure \ref{fbb} across the horizontal symmetry axis of the Aztec diamond, with one change in the bottom frozen region, which is illustrated in Figure~\ref{fgap}. Namely, provided that the line segment $L$ joining the two gaps is outside the arctic circle $C$, in the neighborhood of $L$ the Delannoy paths will also have some diagonal steps, which take them across the ``rift'' created in the frozen region by the original Delannoy path connecting the boundary dents (shown in blue in Figure \ref{fba}).

\end{rem}

\begin{figure}[t]
\vskip0.2in
\centerline{
\hfill
{\includegraphics[width=0.65\textwidth]{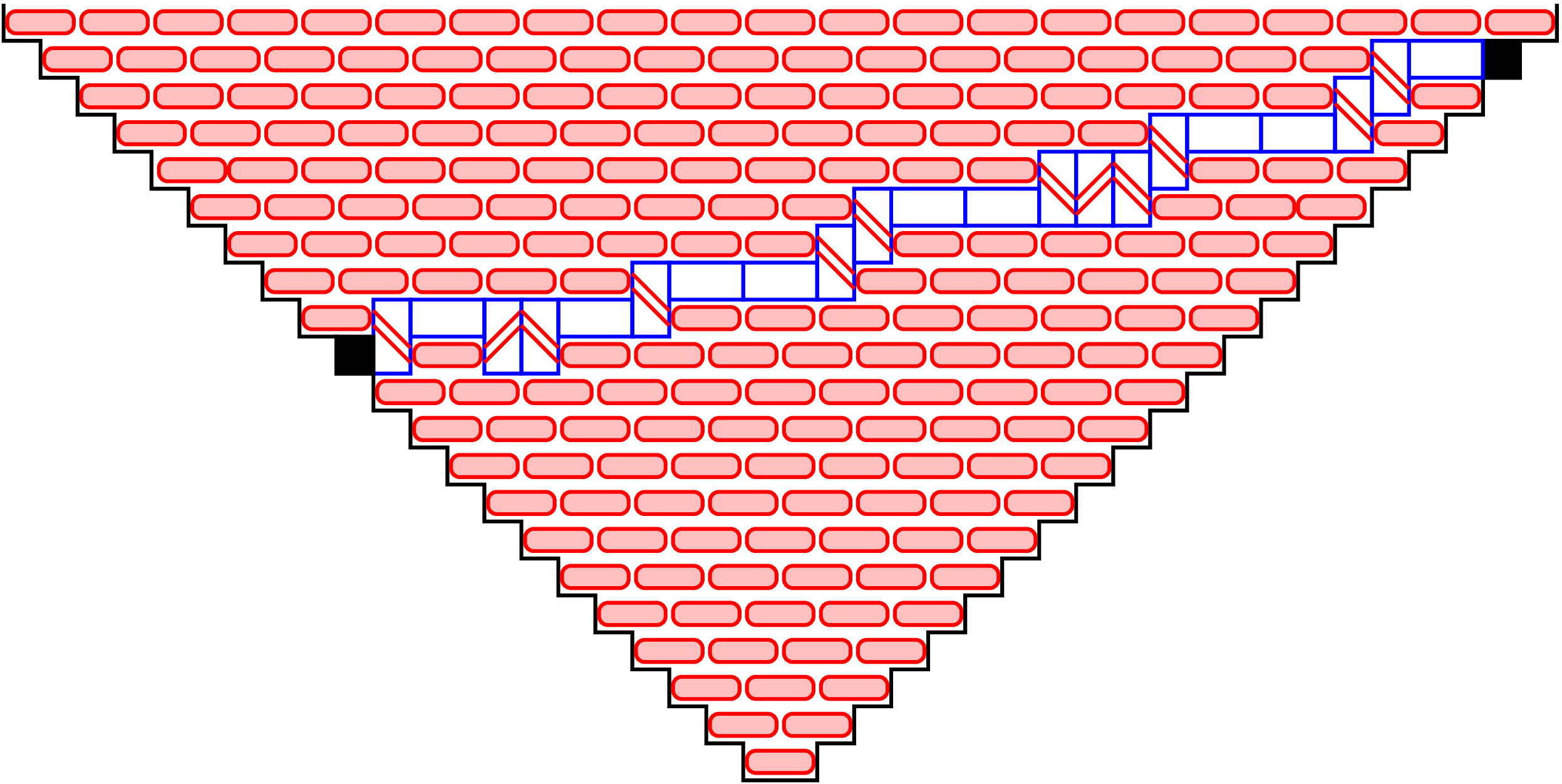}}
\hfill
}
\caption{\label{fgap} The way two unit boundary dents change the appearance of a typical family of non-intersecting Delannoy paths in the frozen region (assuming the line segment $L$ joining the gaps is outside the arctic circle). The paths are obtained following the horizontal portions consisting of the shaded horizontal dominos and the indicated diagonal steps connecting them, which make them pass across the ``rift'' produced by the single blue Delannoy path (of the original encoding) in this frozen region. Compared to the top of Figure~\ref{fbb}, where the paths consist completely of horizontal steps, in this case there are some diagonal steps in the neighborhood of $L$, which are needed (because of the effect of the gaps) in order to match the starting points to the ending points.}
\end{figure}

\begin{rem} \label{rem:3}
One can deduce from Theorem~\ref{tbaa} that measuring the ``observable'' $\M(AD_n^{i,j})$ reveals that the arctic curve for domino tilings of Aztec diamonds is the inscribed circle. 

In order to make this argument, we need to assume that an arctic curve for domino tilings of Aztec diamonds exists. More precisely, assume that there exists a convex arc $\mathscr C$ tangent to the two bottom sides of
the square $S$
so that if $\widehat{\mathscr C}$ is the closed region bounded below by the arc ${\mathscr C}$ and above by the boundary of $S$ we have:

\medskip
$(i)$ for any open set $U$ containing $\widehat{\mathscr C}$, the family of $n$ non-intersecting Delannoy paths corresponding to a tiling of $AD_n$ is contained in $U$ with probability approaching 1, as $n\to\infty$; on the other hand, for any open set $V\subsetneq \widehat{\mathscr C}$, with probability approaching 1 this family of $n$ non-intersecting Delannoy paths is {\it not} contained in $V$.

\medskip
We claim that under this assumption it follows from Theorem~\ref{tbaa} that the arctic curve --- which, by symmetry, is the union of $\mathscr C$ with its rotation by $90^\circ$,  $180^\circ$ and  $270^\circ$ around the center of $S$ --- is the inscribed circle $C$.

We will also use in our arguments the following fact, which is proved in the Appendix:

\medskip
$(ii)$ let $0<a,b<1$; then for any open set $U$ containing the segment $[(a,0), (0,b)]$, the image through a homothethy of factor $1/n$ through the origin of a Delannoy path chosen uniformly at random from the set of paths connecting the lattice points $(i,0)$ and $(0,j)$ is contained in $U$ with probability approaching 1, as $n,i,j\to\infty$ so that $i/n\to a$, $j/n\to b$.

\bigskip
Indeed, suppose that the segment $[(a,0), (0,b)]$ does not cross $\mathscr C$. Then there exist disjoint open sets $U$ and $V$ so that $[(a,0), (0,b)]\subset U$ and $\widehat{\mathscr C}\subset V$. By $(ii)$, basically all Delannoy paths $P$ connecting the two dents are contained in $U$, while by $(i)$, basically all $n$-tuples $\mathscr P$ of non-intersecting Delannoy paths connecting the remaining $n$ pairs of starting and ending points on the boundary of $AD_n^{i,j}$ (equivalently, connecting the same pairs of points on the boundary of $AD_n$) are contained in $V$. Therefore, basically all such pairs $(P,\mathscr P)$ (with $P$ and $\mathscr P$ chosen independently) form a family of $n+1$ non-intersecting Delannoy paths, thus encoding a tiling of $AD_n^{i,j}$. This implies that 
\begin{equation}
\lim_{n\to\infty}\frac{\M(AD_n^{i,j})}{\M(AD_n) D(i-1,j-1)}=1
\label{ebc}
\end{equation}
(because there are $D(i-1,j-1)$ paths connecting the two dents).

By Theorem~\ref{tbaa},
it follows that the segment $[(a,0), (0,b)]$ does not cross the inscribed circle $C$. Thus, the fact that the segment $[(a,0), (0,b)]$ does not cross $\mathscr C$, implies that the same segment does not cross the inscribed circle $C$ either. Since $\mathscr C$ is convex, this implies that
$\widehat C\subset\widehat{\mathscr C}$, where $\widehat C$ is the closed region bounded below by the lower quarter of $C$ and bounded above by the boundary of $S$.

Suppose now that the segment $[(a,0), (0,b)]$ crosses the arc $\mathscr C$. Then by $(i)$ and $(ii)$, for all but a negligible fraction of the pairs $(P,\mathscr P)$, the family of $n+1$ paths it forms fails to be non-intersecting. It follows that in this case
\begin{equation}
\lim_{n\to\infty}\frac{\M(AD_n^{i,j})}{\M(AD_n) D(i,j)}=0.
\label{ebd}
\end{equation}
But then by Theorem~\ref{tbaa} it follows that the segment $[(a,0), (0,b)]$ crosses the inscribed circle~$C$. Therefore, the fact that the segment $[(a,0), (0,b)]$ crosses $\mathscr C$, implies that the same segment also crosses the inscribed circle $C$. This in turn implies that $\widehat{\mathscr C}\subset\widehat{C}$. Thus the arc $\mathscr C$ must be the lower quarter of $C$, and by symmetry the arctic curve must be the inscribed circle $C$, as claimed.

\medskip
The explicit formulas for the asymptotics of $\M(AD_n^{i,j})/\M(AD_n)$ in the two regimes are given in Theorem~\ref{tdb}. An immediate consequence is the formula for the limiting ``helmet'' surface for
$\frac {1} {n}\log(\M(AD_n^{i,j})/\M(AD_n))$
as a function of $a$ and $b$ (see Corollary~\ref{cor:tdc} and Figure~\ref{fda}). A surprising consequence of Theorem~\ref{tdb} is that if the dents in $AD_n^{i_1,\dotsc,i_k,j_1,\dotsc,j_k}$ are positioned so that each segment connecting an $i$-dent with a $j$-dent crosses the inscribed circle, then the asymptotics of $\M(AD_n^{i_1,\dotsc,i_k,j_1,\dotsc,j_k})/\M(AD_n)$ is given by an explicit product of linear factors; this is presented in Theorem~\ref{tde} and Corollary~\ref{tdf}, and interpreted in Remark~\ref{rem:7}.

We present analogs of Theorem~\ref{tbaa} and Theorem~\ref{tbc} for lozenge tilings of dented hexagons (when the dents are on two adjacent sides) in Section~5 (see Theorem~\ref{teb} and Remark~\ref{rem:8}). It is noteworthy that the phenomenon we noticed for dented Aztec diamonds holds here as well: when each segment connecting two dents on different sides crosses the ellipse inscribed in the hexagon, the asymptotics of the ratio between the number of lozenge tilings of the dented and undented hexagon is given by an explicit product of linear factors (see Theorem~\ref{teg} and a discussion of its geometric interpretation in Remarks~\ref{rem:10} and~\ref{rem:12}).
\end{rem}

\section{Proof of Theorem \ref{tba}}

Let the $n\times n$ matrices $L_n$ and $S_n$ be defined by
\begin{equation}
  L_n=\left({\binom i  j}\right)_{0\leq i,j<n},\ \ \
  S_n=\left({\binom {i+j} j}\right)_{0\leq i,j<n}.
\label{ecc}
\end{equation}
Then it is well known (see e.g.~\cite{ES}) that
\begin{equation}
  S_n=L_n\,L_n^{\top}.
 \label{ecd}
\end{equation}
It turns out that a simple modification of the right-hand side above produces the Delannoy matrix
\begin{equation}
  D_n=\left(D(i,j)\right)_{0\leq i,j<n}.
 \label{ebe}
\end{equation}

\begin{theo} 
\label{tca}
We have
\begin{equation}
D_n=L_n
\begin{bmatrix}
  1 &   &    &       & \\
    & 2 &    &       & \\
    &   & 4  &       & \\
    &   &    &\ddots & \\
    &   &    &       & 2^{n-1}
\end{bmatrix}
L_n^{\top}.
\label{ebf}
\end{equation}

\end{theo}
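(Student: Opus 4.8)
The plan is to read off \eqref{ebf} entrywise and reduce it to the scalar identity
\[
  D(i,j)=\sum_{k\ge 0}\binom ik\binom jk 2^k ,\qquad i,j\ge 0 .
\]
Just as \eqref{ecd} is, entrywise, the Chu--Vandermonde identity $\binom{i+j}{j}=\sum_{k}\binom ik\binom jk$, the $(i,j)$-entry of the right-hand side of \eqref{ebf} is $\sum_{k=0}^{n-1}\binom ik\,2^k\binom jk$; since $\binom ik=0$ for $k>i$ and $i\le n-1$, this finite sum already contains all the nonzero terms of $\sum_{k\ge 0}\binom ik\binom jk 2^k$, so it suffices to prove the displayed identity.

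To establish that identity I would pass to ordinary generating functions in two variables. On one side, reading a Delannoy path as a finite sequence of steps each contributing a factor $x$ (East), $y$ (North) or $xy$ (diagonal) gives the classical evaluation
\[
  \sum_{i,j\ge 0}D(i,j)\,x^iy^j=\frac1{1-x-y-xy}.
\]
On the other side, using the elementary expansion $\sum_{i\ge 0}\binom ik x^i=\dfrac{x^k}{(1-x)^{k+1}}$ and then summing the geometric series in $k$,
\[
  \sum_{i,j\ge 0}\Bigl(\sum_{k\ge 0}\binom ik\binom jk 2^k\Bigr)x^iy^j
  =\sum_{k\ge 0}2^k\,\frac{x^k}{(1-x)^{k+1}}\cdot\frac{y^k}{(1-y)^{k+1}}
  =\frac1{(1-x)(1-y)-2xy}.
\]
Since $(1-x)(1-y)-2xy=1-x-y-xy$, the two generating functions coincide; comparing coefficients of $x^iy^j$ yields the scalar identity, hence \eqref{ebf}.

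Every manipulation above is an identity of formal power series --- for fixed $i,j$ only finitely many summands contribute --- so interchanging the two sums and summing the formal geometric series in $k$ is legitimate; this is the only point that needs a word of justification, and I anticipate no real obstacle, since the whole theorem is this one short computation (which is what makes the factorization so simple). Should one wish to bypass generating functions, the scalar identity also follows by induction on $i+j$: with $M(i,j):=\sum_{k}\binom ik\binom jk 2^k$ one has $M(i,0)=M(0,j)=1$, and expanding both binomial coefficients in $M(i,j)$ by Pascal's rule together with the index shift $\sum_{k}\binom{i-1}{k-1}\binom{j-1}{k-1}2^k=2\sum_{k}\binom{i-1}k\binom{j-1}k 2^k$ shows that $M$ satisfies the Delannoy recurrence $M(i,j)=M(i-1,j)+M(i,j-1)+M(i-1,j-1)$, whence $M\equiv D$.
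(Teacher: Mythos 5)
Your proposal is correct and follows essentially the same route as the paper: both reduce \eqref{ebf} to the entrywise identity $D(i,j)=\sum_{k}\binom ik\binom jk 2^k$ by computing the $(i,j)$-entry of the triple product. The only difference is that the paper simply cites this as a classical expression for the Delannoy numbers (referring to Banderier--Schwer), whereas you supply a self-contained verification via the generating function $1/(1-x-y-xy)$ (and an inductive alternative), both of which check out.
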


This beautiful factorization seems to have been noticed first by Douglas Zare, who stated it in a message on the domino email forum in 1999.

\begin{proof} Let $i,j\in\{0,1,\dotsc,n-1\}$. Then the $(i,j)$-entry of the product of the three matrices on the right-hand side of~\eqref{ebf} is
\begin{align}
\sum_{k=0}^{n-1}L_n(i,k)\, 2^k L_n^\top(k,j)
&=\sum_{k=0}^{n-1} 2^k{\binom i k} {\binom j k}\\
&=\sum_{k=0}^{\min(i,j)} 2^k{\binom i k} {\binom j k},
\label{ebg}
\end{align}
which is a classical expression for the Delannoy number $D(i,j)$ (see e.g.~\cite{BS}). \end{proof}

As pointed out by Zare in his 1999 email on the domino forum, an immediate corollary of this factorization is a very short proof of the Aztec diamond theorem of Elkies, Kuperberg, Larsen and Propp~\cite{EKLP}.

\begin{cor}
\label{tcb}
The number of domino tilings of the Aztec diamond of order $n$ is $2^{n(n+1)/2}$.
\end{cor}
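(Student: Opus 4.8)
The plan is to deduce Corollary~\ref{tcb} from Theorem~\ref{tca} by expressing $\M(AD_n)$ as the determinant of a Delannoy matrix via non-intersecting lattice paths.

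First I would invoke the classical bijection (recalled below in Remark~\ref{rem:2}; see also~\cite{LRS}) between domino tilings of a simply connected region and families of non-intersecting Delannoy paths joining fixed marked points on its boundary. Applied to $AD_n$, with the lattice of marked points and the labelling of the sources $A_0,\dots,A_n$ and sinks $B_0,\dots,B_n$ chosen so that the number of Delannoy paths from $A_i$ to $B_j$ equals $D(i,j)$ --- for instance, placing the sources on one coordinate half-axis and the sinks on the other, so that $A_0=B_0$ yields a trivial path and the remaining $n$ paths carry the tiling --- the Lindstr\"om--Gessel--Viennot (LGV) lemma gives
\[
\M(AD_n)=\det\bigl(D(i,j)\bigr)_{0\le i,j\le n}=\det D_{n+1}.
\]

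Then I would simply apply Theorem~\ref{tca} with $n+1$ in place of $n$,
\[
D_{n+1}=L_{n+1}\operatorname{diag}(1,2,4,\dots,2^{n})L_{n+1}^{\top},
\]
and take determinants. Since $L_{n+1}=\bigl(\binom{i}{j}\bigr)_{0\le i,j\le n}$ is lower triangular with all diagonal entries $\binom{i}{i}=1$, we have $\det L_{n+1}=1$, hence
\[
\M(AD_n)=\det D_{n+1}=\prod_{k=0}^{n}2^{k}=2^{n(n+1)/2},
\]
which is exactly the assertion. (Equivalently, one could use the $n\times n$ path matrix $\bigl(2D(i,j)\bigr)_{0\le i,j<n}$ coming directly from the $n$-path encoding of $AD_n$; by Theorem~\ref{tca} its determinant is $2^{n}\det D_n=2^{n}\cdot 2^{n(n-1)/2}$, the same number.)

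I expect the only step requiring real care to be the identification $\M(AD_n)=\det D_{n+1}$: one must pin down the conventions in the path bijection (which $(\sqrt{2}\Z)\times(\sqrt{2}\Z)$ lattice of marked points to use, and the positions and ordering of the sources and sinks) so that the LGV matrix is precisely the Delannoy matrix and the non-intersecting condition is precisely the LGV compatibility condition. This is routine --- and is readily checked against small cases, e.g.\ $\det D_2=2=\M(AD_1)$ and $\det D_3=8=\M(AD_2)$ --- but it is the place where precision is needed; everything after it is immediate from Theorem~\ref{tca}.
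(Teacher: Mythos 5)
Your proposal is correct and follows essentially the same route as the paper: encode tilings of $AD_n$ as families of non-intersecting Delannoy paths (with an extra common source/sink $u_0=v_0$ placed just below the diamond so that non-intersecting families are automatically confined to $AD_n$), identify the Lindstr\"om--Gessel--Viennot matrix with $D_{n+1}$, and take determinants in the factorization of Theorem~\ref{tca} using that $L_{n+1}$ is unitriangular. You also correctly flag the identification $\M(AD_n)=\det D_{n+1}$ as the one step requiring care, which is exactly the point the paper addresses by adjoining the point $w$.
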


\begin{proof} 
Mark the $2n$ midpoints of the vertical unit segments on the boundary of the lower half of the Aztec diamond $AD_n$. Let $u_1,\dotsc,u_n$ be the leftmost $n$ marked points, and $v_1,\dotsc,v_n$ the remaining $n$ marked points (both listed from bottom to top). Mark also the midpoint $w$ of the vertical segment just below $AD_n$ along its vertical symmetry axis (see Figure~\ref{fca}), and let $u_0=v_0=w$.

\begin{figure}[t]
\vskip0.2in
\centerline{
\hfill
{\includegraphics[width=0.45\textwidth]{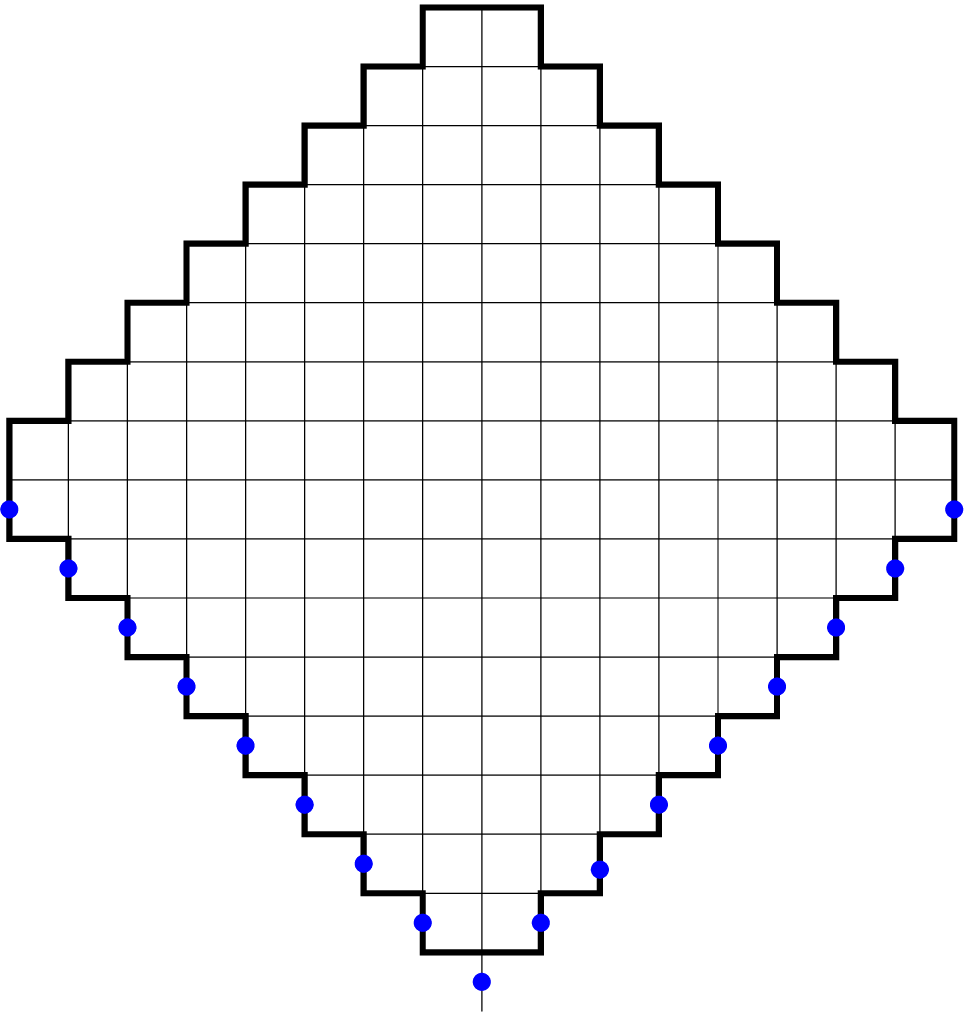}}
\hfill
}
\caption{\label{fca} Marked points on the boundary of $AD_n$ for $n=8$. The marked points on the left, from bottom to top, are $u_0,u_1,\dotsc,u_n$, while those on the right are $v_0,v_1,\dotsc,v_n$ ($u_0=v_0$ is the point at the very bottom).}
\end{figure}

The set of midpoints of vertical unit segments on the square grid with the unit square to the left of them having the same color as the unit squares to the left of these marked points (in a checkerboard coloring) form a lattice ${\mathcal L}$ isomorphic to $\Z^2$; choose the positive directions of the coordinate axes in ${\mathcal L}$ to point southeast and northeast. For $u,v\in{\mathcal L}$, denote by $D(u,v)$ the number of Delannoy paths starting at $u$ and ending at $v$. Then the matrix $(D(u_i,v_j))_{0\leq i,j\leq n}$ is precisely the Delannoy matrix $D_{n+1}$. Therefore, by the Lindstr\"om--Gessel--Viennot theorem (see~\cite{L,GV}), the number of families of non-intersecting Delannoy paths with starting points $u_0,\dotsc,u_n$ and ending points $v_0,\dotsc,v_n$ is equal to $\det D_{n+1}$. 

Note that as a consequence of the geometric positions of these starting and ending points, all Delannoy paths in such a non-intersecting family must in fact be contained in $AD_n$ (the inclusion of the points $u_0$ and $v_0$ was necessary for this to hold; for families in which the paths are allowed to intersect this is not necessarily the case). Therefore, by the bijection described in Remark~\ref{rem:2}, the families of non-intersecting Delannoy paths with starting points $u_0,\dotsc,u_n$ and ending points $v_0,\dotsc,v_n$ can be identified with domino tilings of $AD_n$. This implies $\M(AD_n)=\det D_{n+1}$. Since by Theorem~\ref{tca}
we have $\det(D_{n+1})=2^{1+2+\dotsc+n}$, the proof is complete. \end{proof}

We note that another short proof, using Schr\"oder paths, was given by Eu and Fu~\cite{EF}.

The factorization~\eqref{ebf} can also be used to obtain the following result.

\begin{lem}
\label{tcc} $(${\rm a}$)$. The $(i,j)$ entry of the inverse of the Delannoy matrix $D_n$ is given by
\begin{equation}
D_n^{-1}(i,j)=2(-1)^{i+j}\sum_{k=0}^{n-1}{\binom k i}{\binom k j}\frac{1}{2^{k+1}}.
\label{ebh}  
\end{equation}

$(${\rm b}$)$. In the limit as $n\to\infty$, each fixed position entry of the inverse of $D_n$ approaches, up to sign, the double of the corresponding entry of $D_n$:
\begin{equation}
\lim_{n\to\infty}D_n^{-1}(i,j)=2(-1)^{i+j}D(i,j).
\label{ebi}  
\end{equation}

\end{lem}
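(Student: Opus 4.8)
The plan is to invert the factorization~\eqref{ebf} directly. From $D_n = L_n \Delta_n L_n^\top$, where $\Delta_n = \operatorname{diag}(1,2,4,\dots,2^{n-1})$, we get $D_n^{-1} = (L_n^\top)^{-1}\Delta_n^{-1}L_n^{-1} = (L_n^{-1})^\top \Delta_n^{-1} L_n^{-1}$. So the first step is to record the well-known inverse of the binomial (Pascal) matrix $L_n$: its $(i,j)$ entry is $(-1)^{i+j}\binom{i}{j}$. (This follows from the binomial inversion identity $\sum_k (-1)^{i+k}\binom ik\binom kj = \delta_{ij}$, or one may simply verify $L_n^{-1}L_n = I$ using the Vandermonde-type sum.) Consequently $L_n^{-1}(k,i) = (-1)^{k+i}\binom ki$.

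For part~(a), I would then just expand the matrix product entrywise. Writing $M = L_n^{-1}$, we have
\begin{equation}
D_n^{-1}(i,j) = \sum_{k=0}^{n-1} M(k,i)\, 2^{-k}\, M(k,j)
  = \sum_{k=0}^{n-1} (-1)^{k+i}\binom ki \cdot 2^{-k} \cdot (-1)^{k+j}\binom kj,
\label{eqn:Dinv-expand}
\end{equation}
and since $(-1)^{k+i}(-1)^{k+j} = (-1)^{i+j}$, this equals $(-1)^{i+j}\sum_{k=0}^{n-1}\binom ki \binom kj 2^{-k}$, which after pulling out a factor to match the stated normalization is exactly $2(-1)^{i+j}\sum_{k=0}^{n-1}\binom ki\binom kj \frac{1}{2^{k+1}}$, as claimed in~\eqref{ebh}.

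For part~(b), I would take $n\to\infty$ in~\eqref{ebh} with $i,j$ fixed. Because $\binom ki = 0$ for $k<i$, the series $\sum_{k\ge 0}\binom ki\binom kj 2^{-(k+1)}$ converges (the summands decay geometrically), so the finite sum converges to the infinite one. It then remains to identify $2\sum_{k=0}^{\infty}\binom ki\binom kj 2^{-(k+1)} = \sum_{k=0}^{\infty}\binom ki\binom kj 2^{-k}$ with $2\,D(i,j)$ — equivalently, $\sum_{k\ge 0}\binom ki\binom kj 2^{-k} = 2\sum_{k=0}^{\min(i,j)}\binom ik\binom jk 2^k\cdot \tfrac12$. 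One clean route is to note from~\eqref{ebf} that $D_n^{-1}$ and $D_n$ are related by conjugating the diagonal factor by the anti-diagonal (reversal) permutation combined with sign flips; more concretely, the reversal $k\mapsto n-1-k$ turns $\Delta_n^{-1}$ into $2^{-(n-1)}\Delta_n$, and $L_n^{-1}$ reversed is (up to sign) again a Pascal-type matrix, so the $n\to\infty$ limit of $D_n^{-1}(i,j)$ picks out precisely $2(-1)^{i+j}$ times the classical sum $\sum_k 2^k\binom ik\binom jk = D(i,j)$ appearing in~\eqref{ebg}. Alternatively, one verifies the scalar identity $\sum_{k\ge 0}\binom ki\binom kj 2^{-k} = 2\sum_{k\ge 0}\binom ik\binom jk 2^k$ directly by a generating-function argument: $\sum_{k\ge 0}\binom ki x^k = x^i/(1-x)^{i+1}$, so the left side is a diagonal-type coefficient extraction that can be evaluated in closed form and matched against the right side. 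I would present whichever of these is shortest.

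The only genuine obstacle is part~(b): the algebraic manipulation showing the limiting infinite series equals $2D(i,j)$. Everything in part~(a) is formal matrix algebra once the Pascal-matrix inverse is in hand. For~(b) the subtlety is purely that two different-looking binomial double sums — one with $2^{-k}$ over all $k$, one with $2^{+k}$ truncated at $\min(i,j)$ — represent the same number; I expect the reversal-symmetry observation on~\eqref{ebf} to make this essentially immediate, but if it does not, the generating-function check is a safe fallback.
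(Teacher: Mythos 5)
Your proposal is correct and follows essentially the same route as the paper: invert the factorization $D_n=L_n\Delta_n L_n^{\top}$ using $L_n^{-1}=\bigl((-1)^{i+j}\binom ij\bigr)$ and expand entrywise for part (a), then pass to the infinite series for part (b). The only divergence is at the very last step of (b): the paper simply quotes the classical identity $D(i,j)=\sum_{k\ge 0}\binom ki\binom kj 2^{-(k+1)}$ (equation~\eqref{ebn}, with a reference to Banderier--Schwer), whereas you propose to derive it; of your two suggested derivations, the ``reversal symmetry'' of the factorization does not actually relate the sum over the upper index to the sum over the lower index and should be dropped, but your generating-function fallback is sound, since $\sum_{k}\binom ki\binom kj 2^{-k}=2[y^j](1+y)^i(1-y)^{-i-1}$ matches the standard generating function $\sum_j D(i,j)y^j=(1+y)^i/(1-y)^{i+1}$.
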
  

\begin{proof} It is readily checked that the matrix $L_n$ defined by~\eqref{ebc} has inverse
\begin{equation}
L_n^{-1}=\left((-1)^{i+j}{\binom i  j}\right)_{0\leq i,j<n}.
\label{ebj}
\end{equation}
Indeed, denote by $U_n$ the matrix on the right-hand side of~\eqref{ebj}. Then if $i\geq j$, the $(i,j)$-entry of $L_n\,U_n$ is
\begin{align}
\sum_{k=0}^{n-1}L_n(i,k)\,U_n(k,j)&=\sum_{k=0}^{n-1}{\binom i k}\cdot(-1)^{k+j}{\binom k j}\\
&=\sum_{k=0}^{n-1}(-1)^{k+j}{\binom {i-j} {i-k}}{\binom i j}
&=(-1)^j{\binom i j}\sum_{k=0}^{n-1}(-1)^{k}{\binom {i-j} {i-k}},
\label{ebk}
\end{align}
which is zero for $i>j$ by the binomial theorem, and 1 for $i=j$. A similar calculation verifies that the $(i,j)$-entry of $L_n\,U_n$ is zero if $i<j$. This proves~\eqref{ebj}.

Taking inverses in the factorization~\eqref{ebf} yields
\begin{equation}
D_n^{-1}=(L_n^{-1})^{\top}
\begin{bmatrix}
  1 &   &    &       & \\
    & 1/2 &    &       & \\
    &   & 1/4  &       & \\
    &   &    &\ddots & \\
    &   &    &       & 1/2^{n-1}
\end{bmatrix}
L_n^{-1}.
\label{ebl}
\end{equation}
Therefore, by \eqref{ebj}, the $(i,j)$-entry of $D_n^{-1}$ is
\begin{equation}
\sum_{k=0}^{n-1}(-1)^{i+k}{\binom k i}\frac{1}{2^k}(-1)^{k+j}{\binom k j}
=2(-1)^{i+j}\sum_{k=0}^{n-1}{\binom k i}{\binom k j}\frac{1}{2^{k+1}},
\label{ebm}
\end{equation}
which proves part (a). Part (b) follows from another classical expression for the Delannoy numbers:
\begin{equation}
D(i,j)=\sum_{k=0}^{\infty}{\binom k i}{\binom k j}\frac{1}{2^{k+1}}
\label{ebn}
\end{equation}
(see e.g.~\cite{BS}). \end{proof}


\begin{figure}[t]
\vskip0.3in
\centerline{
\hfill
{\includegraphics[width=0.43\textwidth]{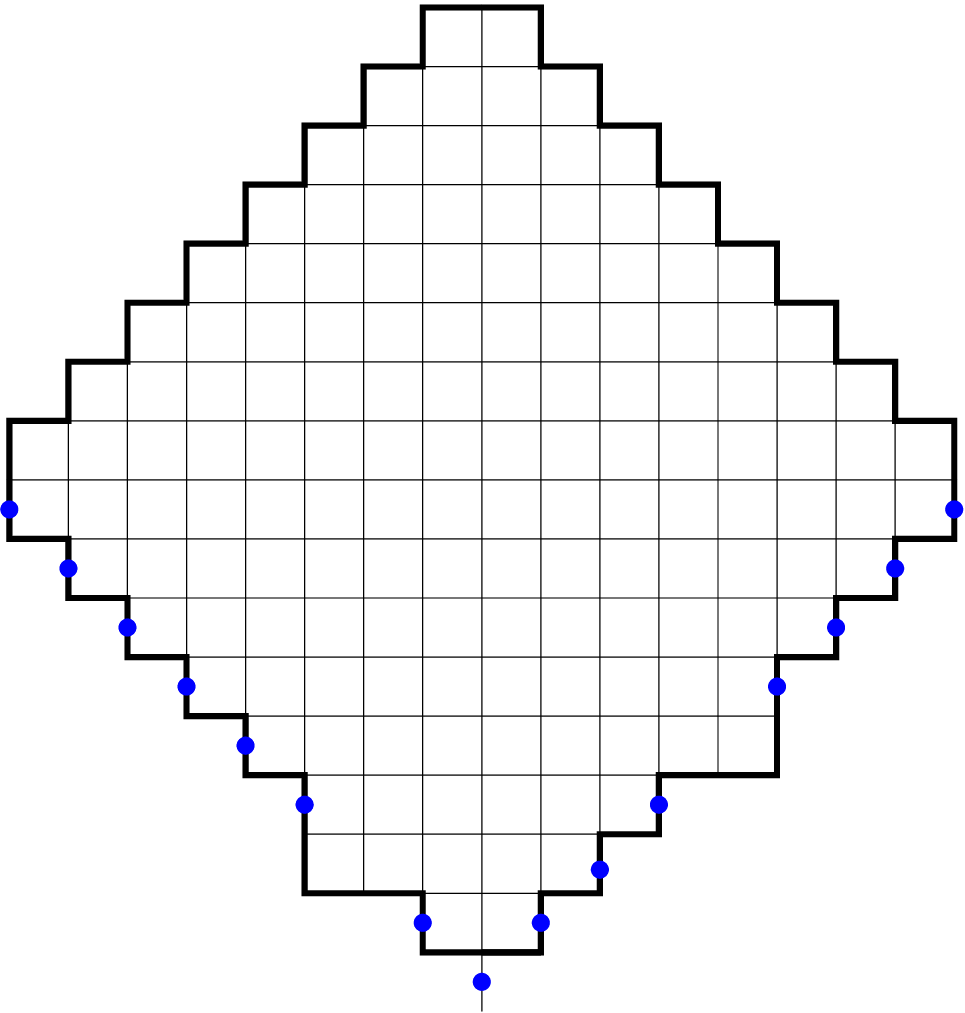}}
\hfill
}
\caption{\label{fcb} The region $\overline{AD}_{n}^{i,j}$ for $n=8$, $i=2$, $j=4$. The set of marked points on its boundary is the same as for $AD_n$ (see Figure~\ref{fca}), except the starting point labeled $i$ and the ending point labeled $j$ are omitted.}
\end{figure}

\begin{proof}[Proof of Theorem \ref{tba}]
Let $\overline{AD}_n^{\,i,j}$ be the region obtained from the Aztec diamond $AD_n$ by {\it adding} two unit squares: the one sharing an edge with the unit squares labeled $i$ and $i+1$ on the southwestern boundary, and the one sharing an edge with the unit squares labeled $j$ and $j+1$ on the southeastern boundary (see Figure~\ref{fcb}). We will first prove that
\begin{equation}
\frac{\M(\overline{AD}_n^{\,i,j})}{\M(AD_n)}
=
(-1)^{i+j}D_{n+1}^{-1}(i,j).
\label{ebo}
\end{equation}
Then we will deduce part (a) of Theorem~\ref{tba} from~\eqref{ebo} and Lemma~\ref{tcc}(a).

In order to prove~\eqref{ebo}, note that when one encodes the tilings of $\overline{AD}_n^{\,i,j}$ by families of non-intersecting Delannoy paths, the resulting Lindstr\"om--Gessel--Viennot matrix is simply\footnote{ Given a matrix $A$ with rows and columns labeled by $0,1\dotsc,n$, we denote by $M_{\{i_1,\dotsc,i_k\}}^{\{j_1,\dotsc,j_k\}}$ the submatrix obtained from $A$ by deleting rows $i_1,\dotsc,i_k$ and columns $j_1,\dotsc,j_k$.} $M_{\{i\}}^{\{j\}}$, where $M=D_{n+1}$ is the Lindstr\"om--Gessel--Viennot matrix obtained when encoding by Delannoy paths the tilings of the Aztec diamond $AD_n$. Indeed, this follows because the effect of adding the two unit squares to $AD_n$ to make the region $\overline{AD}_n^{\,i,j}$ is to remove the starting point labeled~$i$ and the ending point labeled $j$ from the boundary. By the Lindstr\"om--Gessel--Viennot theorem, this implies (using also that the Delannoy matrix is symmetric) that
\begin{equation}
\frac{\M(\overline{AD}_n^{\,i,j})}{\M(AD_n)}
=
\frac{\det (D_{n+1})_{\{i\}}^{\{j\}}}{\det D_{n+1}}
=
(-1)^{i+j}(D_{n+1})^{-1}(i,j),
\label{ebp}
\end{equation}
which proves \eqref{ebo}.

We will now show that 
\begin{equation}
\M(AD_{n+1}^{i+1,j+1})
=
2^n\M(\overline{AD}_{n}^{i,j}).
\label{ebq}
\end{equation}
Since $\M(AD_{n+1})=2^{n+1}\M(AD_n)$, this will imply by~\eqref{ebo} that
\begin{equation}
\frac{\M(AD_{n+1}^{i+1,j+1})}{\M(AD_{n+1})}
=
\frac12\frac{\M(\overline{AD}_{n}^{\,i,j})}{\M(AD_{n})}
=
\frac{(-1)^{i+j}}{2}D_{n+1}^{-1}(i,j).
\label{ebr}
\end{equation}
Note that part (a) of Theorem~\ref{tba} follows directly from~\eqref{ebr} and Lemma~\ref{tcc}(a), while part~(b) follows from~part (a) and
equation~\eqref{ebn}.

To prove \eqref{ebq}, recall that domino tilings of a region~$R$  on the square lattice can be identified with perfect matchings of the planar dual of~$R$. One readily verifies that, if we denote by~$G$ the planar dual graph of the region $AD_{n+1}^{i+1,j+1}$, then $G$ is a graph to which the complementation theorem of \cite[Theorem~2.1]{CT} can be applied, and the complement~$G'$ (in the sense of the quoted result) of $G$ is precisely the planar dual of $\overline{AD}_{n}^{i,j}$. It follows then by \cite[Theorem~2.1]{CT} that 
\begin{equation}
\M(AD_{n+1}^{i+1,j+1})
=
2^{n}\M(\overline{AD}_{n}^{i,j}),
\label{ebs}
\end{equation}
which proves~\eqref{ebq}. This completes the proof of Theorem~\ref{tba}. \end{proof}

More generally, for $1\leq i_1<\cdots<i_k\leq n$ and $1\leq j_1<\cdots<j_k\leq n$, define the dented Aztec diamond $AD_{n}^{i_1,\dotsc,i_k,j_1\dotsc,j_k}$ to be the region obtained from~$AD_n$ by removing the unit squares labeled $i_1,\dotsc,i_k$ on its southwestern boundary, and the ones labeled $j_1,\dotsc,j_k$ on its southeastern boundary. Then a determinant identity published by Jacobi\footnote{\label{foot:J}Jacobi's determinant identity (see Muir \cite[Eq.~(XX.4), p.~208]{Muir}) states that for any square matrix $M$ one has
\begin{equation*}
\det \left(\det M_{\{i_\mu\}}^{\{j^\nu\}}\right)_{1\leq \mu,\nu\leq k}
=
(\det M)^{k-1}\det M_{\{i_1,\dotsc,i_k\}}^{\{j_1,\dotsc,j_k\}}.
\end{equation*}
Provided $\det M\neq0$, this can be restated as
\begin{equation*}
\det \left(\frac{\det M_{\{i_\mu\}}^{\{j^\nu\}}}{\det M}\right)_{1\leq \mu,\nu\leq k}
=
\frac{\det M_{\{i_1,\dotsc,i_k\}}^{\{j_1,\dotsc,j_k\}}}{\det M},
\end{equation*}
which after transposing the matrix on the left-hand side gives
\begin{equation*}
\det \left((-1)^{i_\mu+j_\nu} M^{-1}(i_\mu,j_\nu)\right)_{1\leq \mu,\nu\leq k}
=
\frac{\det M_{\{i_1,\dotsc,i_k\}}^{\{j_1,\dotsc,j_k\}}}{\det M}.
\end{equation*}
} (see~\cite{Muir}) readily implies the following result. 

\begin{cor}
\label{tcd}
We have
\begin{equation}
\lim_{n\to\infty}\frac{\M(AD_{n}^{i_1,\dotsc,i_k,j_1\dotsc,j_k})}{\M(AD_n)}
=
\det \left(D(i_\mu-1,j_\nu-1)\right)_{1\leq \mu,\nu\leq k}.
\label{ebt}
\end{equation}
\end{cor}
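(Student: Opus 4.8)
The plan is to follow the same strategy used in the proof of Theorem~\ref{tba}, only now working with the full collection of dents rather than a single one on each side, and invoking Jacobi's determinant identity at the end. First I would introduce the region $\overline{AD}_n^{\,i_1,\dots,i_k,j_1,\dots,j_k}$ obtained from $AD_n$ by \emph{adding} $k$ unit squares on the southwestern boundary (opposite the edges between the squares labeled $i_\mu$ and $i_\mu+1$) and $k$ unit squares on the southeastern boundary (opposite the edges between the squares labeled $j_\nu$ and $j_\nu+1$). Encoding tilings of this region by non-intersecting Delannoy paths, the Lindstr\"om--Gessel--Viennot matrix is exactly $(D_{n+1})_{\{i_1,\dots,i_k\}}^{\{j_1,\dots,j_k\}}$, because adding these $2k$ unit squares deletes the starting points labeled $i_1,\dots,i_k$ and the ending points labeled $j_1,\dots,j_k$ from the boundary. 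Hence, by the Lindstr\"om--Gessel--Viennot theorem,
\begin{equation*}
\frac{\M(\overline{AD}_n^{\,i_1,\dots,i_k,j_1,\dots,j_k})}{\M(AD_n)}
=\frac{\det(D_{n+1})_{\{i_1,\dots,i_k\}}^{\{j_1,\dots,j_k\}}}{\det D_{n+1}}.
\end{equation*}

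Next I would establish the many-dents analog of~\eqref{ebq}, namely
\begin{equation*}
\M(AD_{n+1}^{i_1+1,\dots,i_k+1,j_1+1,\dots,j_k+1})
=2^{kn-\binom{k}{2}}\,\M(\overline{AD}_n^{\,i_1,\dots,i_k,j_1,\dots,j_k})
\end{equation*}
(the exact power of $2$ will come out of the bookkeeping; what matters is that it is an explicit power of $2$ independent of the $i$'s and $j$'s). This should again follow from iterated application of the complementation theorem \cite[Theorem~2.1]{CT} to the planar dual graph $G$ of $AD_{n+1}^{i_1+1,\dots,i_k+1,j_1+1,\dots,j_k+1}$: each application peels off one added unit square and contributes a factor of $2$, with the exponent being the number of faces removed. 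Combining this with $\M(AD_{n+1})=2^{n+1}\M(AD_n)$ and the displayed LGV identity yields an explicit formula for $\M(AD_{n+1}^{i_1+1,\dots,i_k+1,j_1+1,\dots,j_k+1})/\M(AD_{n+1})$ as a power of $2$ times $\det(D_{n+1})_{\{i_1,\dots,i_k\}}^{\{j_1,\dots,j_k\}}/\det D_{n+1}$, generalizing~\eqref{ebr}.

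Now Jacobi's determinant identity (in the restated form given in Footnote~\ref{foot:J}, applied to $M=D_{n+1}$, which is invertible since $\det D_{n+1}=2^{n(n+1)/2}\neq 0$) converts
$\det(D_{n+1})_{\{i_1,\dots,i_k\}}^{\{j_1,\dots,j_k\}}/\det D_{n+1}$
into the $k\times k$ determinant $\det\bigl((-1)^{i_\mu+j_\nu}D_{n+1}^{-1}(i_\mu,j_\nu)\bigr)_{1\le\mu,\nu\le k}$. By Lemma~\ref{tcc}(b), $(-1)^{i_\mu+j_\nu}D_{n+1}^{-1}(i_\mu,j_\nu)\to 2D(i_\mu,j_\nu)$ as $n\to\infty$ for fixed indices, so the $k\times k$ determinant tends to $2^k\det\bigl(D(i_\mu,j_\nu)\bigr)$. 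Putting everything together, reindexing via $i_\mu\mapsto i_\mu-1$ and $j_\nu\mapsto j_\nu-1$, and checking that the accumulated powers of $2$ cancel, gives exactly the claimed limit $\det\bigl(D(i_\mu-1,j_\nu-1)\bigr)_{1\le\mu,\nu\le k}$.

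The main obstacle I anticipate is the second step: verifying that the complementation theorem of \cite{CT} applies to the graph $G$ in the multi-dent case and correctly identifying the complement $G'$ as the planar dual of $\overline{AD}_n^{\,i_1,\dots,i_k,j_1,\dots,j_k}$, together with pinning down the precise power of $2$ produced. In the single-dent case this was a clean one-shot application; with several added unit squares on each side one must either apply the theorem in a suitably general form or iterate it, keeping careful track of how the ``removable'' forced-matching structure behaves when several dents are present simultaneously (in particular ensuring the added squares do not interfere with one another). Once that combinatorial identity and its exponent are nailed down, the rest is the routine linear-algebra manipulation with Jacobi's identity and the limit from Lemma~\ref{tcc}(b), with only a power-of-$2$ bookkeeping check at the end.
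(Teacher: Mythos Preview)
Your approach is exactly that of the paper: introduce $\overline{AD}_n^{\,i_1,\dots,i_k,j_1,\dots,j_k}$, compute its tiling count via Lindstr\"om--Gessel--Viennot as $\det(D_{n+1})_{\{i_1,\dots,i_k\}}^{\{j_1,\dots,j_k\}}$, relate it to $\M(AD_{n+1}^{i_1+1,\dots,j_k+1})$ via the complementation theorem, apply Jacobi's identity, and pass to the limit via Lemma~\ref{tcc}(b). Two small corrections: the power of~$2$ in the complementation step is $2^{n+1-k}$ (not $2^{kn-\binom{k}{2}}$; for $k=1$ both agree with $2^n$, but your guess diverges for $k\ge2$), and no iteration is needed --- the complementation theorem applies in a single shot to the whole dual graph just as in the $k=1$ case, so your anticipated obstacle does not in fact arise. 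With $2^{n+1-k}$ in hand, dividing by $\M(AD_{n+1})=2^{n+1}\M(AD_n)$ gives the prefactor $1/2^k$, which exactly cancels the $2^k$ coming from $\det(2D(i_\mu,j_\nu))$ in the limit.
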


\begin{proof} In analogy to $\overline{AD}_{n}^{\,i,j}$, define $\overline{AD}_{n}^{\,i_1,\dotsc,i_k;j_1\dotsc,j_k}$ to be the region obtained from $AD_n$ by adding to it the $k$ unit squares immediately to the left of the unit squares labeled $i_1,\dotsc,i_k$ on its southwestern boundary, and the $k$ unit squares immediately to the right of the unit squares labeled $j_1,\dotsc,j_k$ on its southeastern boundary. Encoding the tilings of $\overline{AD}_{n}^{\,i_1,\dotsc,i_k;j_1\dotsc,j_k}$ by families of non-intersecting Delannoy paths starting and ending at boundary points in the lower half of the region as indicated in Figure~\ref{fca}, one readily sees that the obtained Lindstr\"om--Gessel--Viennot matrix is precisely $B_{\{i_1,\dotsc,i_k\}}^{\{j_1,\dotsc,j_k\}}$, where $B=D_{n+1}$. Therefore we have
\begin{equation}
\M(\overline{AD}_{n}^{\,i_1,\dotsc,i_k;j_1\dotsc,j_k})
=
\det B_{\{i_1,\dotsc,i_k\}}^{\{j_1,\dotsc,j_k\}}.
\label{ebu}
\end{equation}
The argument that proved~\eqref{ebq} also implies
\begin{equation}
\M({AD}_{n+1}^{i_1+1,\dotsc,i_k+1;j_1+1\dotsc,j_k+1})
=
2^{n+1-k}
\M(\overline{AD}_{n}^{\,i_1,\dotsc,i_k;j_1\dotsc,j_k}),
\label{ebv}
\end{equation}
which in turn yields
\begin{equation}
\frac{\M(AD_{n+1}^{i_1+1,\dotsc,i_k+1;j_1+1\dotsc,j_k+1})}{\M(AD_{n+1})}
=
\frac{1}{2^k}\frac{\M(\overline{AD}_{n}^{\,i_1,\dotsc,i_k;j_1\dotsc,j_k})}{\M(AD_{n})}.
\label{ebw}
\end{equation}
However, since $B=D_{n+1}$ and $\M(AD_n)=\det B$, we obtain from~\eqref{ebu} and the restatement in footnote~\ref{foot:J} of Jacobi's determinant identity that
\begin{align}
\frac{\M(AD_{n+1}^{i_1+1,\dotsc,i_k+1;j_1+1\dotsc,j_k+1})}{\M(AD_{n+1})}
&=
\frac{1}{2^k}\frac{\det B_{\{i_1,\dotsc,i_k\}}^{\{j_1,\dotsc,j_k\}}}{\det B}
\nonumber
\\
&=
\frac{1}{2^k}
\det \left( (-1)^{i_\mu+j_\nu}B^{-1}(i_\mu\,j_\nu)\right)_{1\leq \mu,\nu\leq k}.
\label{ebx}
\end{align}
Since $B=D_{n+1}$, the statement of the corollary follows now from Lemma~\ref{tcc}(b). \end{proof}

\begin{rem} \label{rem:4}
The matrix on the right-hand side in~\eqref{ebt} is the Lindstr\"om--Gessel--Viennot matrix of a certain sub-region $R^{\,i_1,\dotsc,i_k;j_1\dotsc,j_k}$ of ${AD}_{n}^{\,i_1,\dotsc,i_k;j_1\dotsc,j_k}$ determined by the dents.
Therefore, the determinant on the right-hand side of~\eqref{ebt} is equal to $\M(R^{\,i_1,\dotsc,i_k;j_1\dotsc,j_k})$. In the special case when $\{i_1,\dotsc,i_k\}=\{1,\dotsc,k\}$, the region $R^{\,1,\dotsc,k;j_1\dotsc,j_k}$ is an Aztec rectangle with dents on the bottom, and by \cite[Eq.~(4.4)]{FT} its number of domino tilings is given by a simple product formula.
\end{rem}

\section{Proof of Theorem \ref{tbaa}}

The statement of Theorem~\ref{tbaa} follows directly from Theorem~\ref{tba}(a)
and part (a) of the following lemma.

\begin{lem}
\label{tda}  
$(${\rm a}$)$
As $n, x, y\to\infty$ so that $x/n \to a$ and $y/n \to b$, where $0 < a, b <1$ are fixed, we have
\begin{equation}
\lim_{n\to\infty}\frac{\displaystyle \sum_{k=0}^{n-1}{\binom k x}{\binom k y}\frac{1}{2^{k+1}}}{D(x,y)}
=
\begin{cases}
  1,  & \text{\rm if\ } (1-a)(1-b)>1/2, \\
  0,  & \text{\rm if\ } (1-a)(1-b)<1/2.
\end{cases}
\label{eda}  
\end{equation}  
$(${\rm b}$)$ As $n, x, y\to\infty$ so that $x/n \to a$ and $y/n \to b$, where $0 < a, b <1$ are fixed, the asymptotics of the numerator on the left-hand side above is given by
\begin{equation}
\sum_{k=0}^{n-1}{\binom k x}{\binom k y}\frac{1}{2^{k+1}}
\sim
\begin{cases}
  \frac
  {(a+b+\sqrt{a^2+b^2})\left(\frac{(a+\sqrt{a^2+b^2})^b(b+\sqrt{a^2+b^2})^a}{a^a b^b}\right)^n}
  {2\sqrt{2\pi n}\sqrt{ab\sqrt{a^2+b^2}}},  & \text{\rm if\ } (1-a)(1-b)>1/2, \\
  \\
  \frac{\sqrt{(1-a)(1-b)}\left(2a^ab^b(1-a)^{1-a}(1-b)^{1-b}\right)^{-n}}{4\pi n\sqrt{ab}\left(\frac12-(1-a)(1-b)\right)},  & \text{\rm if\ } (1-a)(1-b)<1/2.
\end{cases}
\label{edaa}  
\end{equation}

\end{lem}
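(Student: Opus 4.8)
The quantity to analyze is $T_n := \sum_{k=0}^{n-1}\binom{k}{x}\binom{k}{y}\,2^{-(k+1)}$, and by \eqref{ebn} we have $D(x,y)=\sum_{k=0}^\infty \binom{k}{x}\binom{k}{y}\,2^{-(k+1)}$, so the ratio in \eqref{eda} equals $1$ minus a tail, and the whole problem reduces to estimating the summand $f(k):=\binom{k}{x}\binom{k}{y}\,2^{-(k+1)}$ as $k$, $x$, $y$ all grow linearly in $n$. The strategy is a standard saddle-point / Laplace-type analysis: first locate where $f(k)$ is maximized over $k\ge \max(x,y)$, then compare that location to $n$, and finally sum a Gaussian approximation around the maximum.

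First I would set $k=tn$ and use Stirling to write $\tfrac1n\log f(tn)\to \varphi(t):= [t\log t-(t-a)\log(t-a)-a\log a]+[t\log t-(t-b)\log(t-b)-b\log b]-t\log 2$, valid for $t>\max(a,b)$. Differentiating, $\varphi'(t)=\log\frac{t^2}{(t-a)(t-b)}-\log 2$, so the critical point $t^*$ solves $t^2 = 2(t-a)(t-b)$, i.e. $t^*=(a+b+\sqrt{a^2+b^2})$ (the larger root, which is the relevant one). A short computation shows $\varphi(t^*)=0$ exactly when $(1-a)(1-b)=1/2$; more precisely one checks that $\varphi(t^*)$ is an increasing function that is positive when $(1-a)(1-b)>1/2$ and negative when $(1-a)(1-b)<1/2$ — this is the dichotomy. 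Now the two regimes split:

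\emph{Case $(1-a)(1-b)>1/2$.} Here $t^*<1$ (since $\varphi(1)<0<\varphi(t^*)$ forces $t^*<1$), so the maximizing index $k^*\approx t^* n$ lies \emph{inside} the range $[0,n-1]$, and the missing tail $\sum_{k\ge n}f(k)$ is exponentially negligible compared to $f(k^*)$ because $\varphi$ is decreasing past $t^*$ and $\varphi(1)<\varphi(t^*)$. Hence $T_n\sim D(x,y)$, giving the limit $1$ in \eqref{eda}; for part (b) one runs the full Laplace approximation $T_n\sim f(k^*)\sqrt{2\pi/(-n\varphi''(t^*))}$ with $\varphi''(t^*)=\frac{1}{t^*}-\frac{1}{t^*-a}-\frac{1}{t^*-b}$, plugs in $e^{n\varphi(t^*)}$ for the dominant exponential (this is where the factor $\bigl(\tfrac{(a+\sqrt{a^2+b^2})^b(b+\sqrt{a^2+b^2})^a}{a^ab^b}\bigr)^n$ and the prefactor $\tfrac{a+b+\sqrt{a^2+b^2}}{2\sqrt{2\pi n}\sqrt{ab\sqrt{a^2+b^2}}}$ come from, after simplifying the algebraic pieces using $t^{*2}=2(t^*-a)(t^*-b)$).

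\emph{Case $(1-a)(1-b)<1/2$.} Now $t^*>1$, so over the truncated range $[0,n-1]$ the summand $f(k)$ is \emph{increasing} all the way up to the last term $k=n-1$; the sum is therefore dominated by a geometric-type tail near $k=n$. Writing $k=n-1-j$ and using $f(n-1-j)/f(n-2-j)\to r:=\frac{1}{2}\cdot\frac{1}{(1-a)(1-b)}>1$ as $n\to\infty$ (uniformly for $j=o(n)$), I sum the resulting near-geometric series: $T_n\sim f(n-1)\cdot\frac{1}{1-1/r}=f(n-1)\cdot\frac{(1-a)(1-b)}{(1-a)(1-b)-1/2}\cdot(-1)$... more carefully, $T_n\sim f(n-1)\,\frac{r}{r-1}$ with the sign bookkeeping giving the $\bigl(\tfrac12-(1-a)(1-b)\bigr)$ in the denominator, and $f(n-1)$ evaluated by Stirling at $k=n$ yields $\bigl(2a^ab^b(1-a)^{1-a}(1-b)^{1-b}\bigr)^{-n}$ times the algebraic prefactor $\tfrac{\sqrt{(1-a)(1-b)}}{4\pi n\sqrt{ab}\,(\frac12-(1-a)(1-b))}$ (the $1/n$ rather than $1/\sqrt n$ reflecting that here we sum a geometric series of terms each of size $\sim n^{-1}$, not a Gaussian of width $\sqrt n$). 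Comparing with $D(x,y)=\sum_{k=0}^\infty f(k)$, whose tail $\sum_{k\ge n}f(k)$ is now of strictly larger exponential order (since $\varphi(t^*)>0$... wait, here $\varphi(t^*)<0$, so actually $D(x,y)$ itself has a convergent sum dominated near $t^*>1$ \emph{if} the sum extended to infinity — one must be careful: $D(x,y)$ is finite, with dominant terms near $k=t^*n>n$), shows the ratio $T_n/D(x,y)\to 0$.

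\textbf{Main obstacle.} The genuinely delicate point is the \emph{uniformity} of the Stirling/Laplace estimates as $x,y$ range over $x/n\to a$, $y/n\to b$ simultaneously with $k$: the binomial $\binom{k}{x}$ must be approximated uniformly for $k$ in a window of width $O(\sqrt n)$ (Case 1) or $O(1)$-to-$o(n)$ many terms below $n$ (Case 2), and one needs the error terms in $\log\binom{k}{x}= n\,[\,\cdot\,]+O(\log n)$ to be controlled uniformly in that window, not just pointwise. I would handle this by fixing the ratio parameters first, writing $x=an+O(1)$, $y=bn+O(1)$, and checking that shifting $x,y$ by $O(1)$ changes $f(k)$ only by a bounded multiplicative factor that converges — so the $O(1)$ corrections to $x,y$ contribute only to lower-order terms and can be absorbed; the truncation-tail comparison in Case 1 (showing $\sum_{k\ge n}f(k)=o(f(k^*))$) and the geometric-ratio estimate in Case 2 are then routine once the uniform local expansion is in hand. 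I would also record the boundary case $(1-a)(1-b)=1/2$ is excluded from the statement, so no separate treatment is needed there.
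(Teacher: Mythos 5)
Your overall strategy is exactly the paper's: locate the maximizing index $k\approx t^*n$ of the summand via Stirling, find $t^*=a+b+\sqrt{a^2+b^2}$ from $t^2=2(t-a)(t-b)$, and split according to whether this saddle lies inside or outside the truncated range $[0,n-1]$, with a Gaussian (Laplace) sum in the first regime and a near-geometric series anchored at $k=n-1$ (with ratio $2(1-a)(1-b)$) in the second. The exponents, prefactors and the geometric ratio all match the paper's.

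However, your justification of the dichotomy rests on false claims. You assert that $\varphi(t^*)=0$ exactly when $(1-a)(1-b)=1/2$, that $\varphi(t^*)<0$ when $(1-a)(1-b)<1/2$, and that $\varphi(1)<0$. None of these is true. In fact
\[
\varphi(t^*)=\log\frac{(a+\sqrt{a^2+b^2})^b\,(b+\sqrt{a^2+b^2})^a}{a^ab^b}
=b\log\frac{a+\sqrt{a^2+b^2}}{b}+a\log\frac{b+\sqrt{a^2+b^2}}{a}>0
\]
for all $0<a,b<1$ (since $a+\sqrt{a^2+b^2}>a+b>b$ and symmetrically), which is just the statement that $D(x,y)$ always grows exponentially; and $\varphi(1)=-\log\bigl(2a^a(1-a)^{1-a}b^b(1-b)^{1-b}\bigr)$ equals $\log 2>0$ at $a=b=1/2$ --- its sign changes along the critical curve of Corollary~\ref{tdd}, not along the hyperbola $(1-a)(1-b)=1/2$. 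Moreover, the inference ``$\varphi(1)<0<\varphi(t^*)$ forces $t^*<1$'' is a non sequitur even granting the premises: knowing $\varphi(1)<\varphi(t^*)$ tells you only that $1\neq t^*$, not on which side of the maximum the point $1$ lies. The correct (and elementary) criterion, which is what the paper uses, is to compare $t^*$ with $1$ directly: $a+b+\sqrt{a^2+b^2}<1$ iff $a^2+b^2<(1-a-b)^2$ with $a+b<1$, iff $(1-a)(1-b)>1/2$. Once you substitute this, the rest of your argument goes through, but your Case~2 discussion of why the ratio tends to $0$ must also be repaired: the reason is not that $\varphi(t^*)<0$, but that the full sum $D(x,y)$ is dominated by the terms near $k\approx t^*n>n$, which are exponentially larger than every term of the truncated sum, whose largest term sits at $k=n-1$ with exponent $\varphi(1)<\varphi(t^*)$ (as $t^*$ is the unique interior maximum). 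A minor slip besides: $\varphi''(t)=\frac{2}{t}-\frac{1}{t-a}-\frac{1}{t-b}$, not $\frac{1}{t}-\frac{1}{t-a}-\frac{1}{t-b}$; with the correct value one gets $-\varphi''(t^*)=2\sqrt{a^2+b^2}/(a+b+\sqrt{a^2+b^2})^2$, which reproduces the paper's Gaussian width.
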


Indeed, one can readily check that the line segment $[(a,0),(0,b)]$ is contained in the exterior of the circle inscribed in the unit square precisely when $(1-a)(1-b)>1/2$.

\begin{proof}[Proof of Lemma \ref{tda}]
Note that, by equation~\eqref{ebn}, the denominator on the left-hand side in~\eqref{tda} is $\sum_{k=0}^{\infty}{\binom k x}{\binom k y}\frac{1}{2^{k+1}}$.

We want to estimate
\begin{equation}
\sum_{k=0}^{n-1}\binom kx\binom ky2^{-k-1}
\end{equation}
as $n\to\infty$, where $x=an$ and $y=bn$. Let us write
\begin{align*}
F(a,b;n,k)&:=\binom k{an}\binom k{bn}2^{-k-1}\\
&=\frac {\Gamma^2(k+1)} {\Gamma(an+1)\,\Gamma(k-an+1)\,
\Gamma(bn+1)\,\Gamma(k-bn+1)}2^{-k-1}.
\end{align*}
In a first step, we want to determine the maximum of $F(a,b;n,nt)$
as a function in~$t$ (and with $a,b,n$ fixed). 
Obviously, we must compute the
derivative of $F(a,b;n,nt)$ with respect to~$t$ and
equate it to zero:
\begin{equation*}
\frac {\partial} {\partial t}\frac {\Gamma^2(nt+1)} {\Gamma(an+1)\,\Gamma(tn-an+1)\,
\Gamma(bn+1)\,\Gamma(tn-bn+1)}2^{-tn-1}=0.
\end{equation*}
Denoting the classical digamma function $\Gamma'(x)/\Gamma(x)$ by $\psi(x)$,
this leads to the equation
\begin{equation*}
2\psi(tn+1)-\psi(tn-an+1)-\psi(tn-bn+1)-\log 2=0.
\end{equation*}
Now, it is well known that $\psi(x)\sim\log x$ as $x\to\infty$. Hence, in a first approximation the above equation yields
\begin{equation*}
2\log(tn)-\log(tn-an)-\log(tn-bn)-\log2=0,
\end{equation*}
or, equivalently, 
\begin{equation*}
\frac {t^2} {(t-a)(t-b)}=2.
\end{equation*}
The solutions to this equation are $t=a+b\pm\sqrt{a^2+b^2}$, the maximum point
is $t=a+b+\sqrt{a^2+b^2}$. We have found that the maximum of $F(a,b;n,nt)$
occurs (roughly) at $t=a+b+\sqrt{a^2+b^2}$.

Now the question is whether this is smaller or larger than~1, meaning
whether the point of maximum is inside the summation range or not.
Hence, we must look at
\begin{equation*}
a+b+\sqrt{a^2+b^2}<1,
\end{equation*}
or, equivalently, at
\begin{equation*}
a^2+b^2<(1-a-b)^2,
\end{equation*}
or, again equivalently, at
\begin{equation}
\frac {1} {2}<(1-a)(1-b).
\label{tdone}
\end{equation}
Thus, if \eqref{tdone} is satisfied, then the point of maximum is in the interior of the summation, otherwise not.

\medskip
Let us first assume that~\eqref{tdone} holds, and write $t_0=a+b+\sqrt{a^2+b^2}$. Then, by Stirling's approximation in the form
\begin{align} 
\log\Ga(an+bl+c)&=\left(a n+b l+c-\frac {1} {2}\right)\left(\log\left(a+b \tfrac
{l\vphantom{1}} {n}\right)
+\log(n)+\log\left(1+\tfrac {c} {an+bl}\right)\right)
\nonumber
\\
&\kern1cm
        -(a n+b l+c)+
        \frac {1} {2}\log(2\pi)+O\left(\frac {1} {an+bl}\right)
\nonumber
\\
&=\left(a n+b l+c-\frac {1} {2}\right)(\log(a+b \tfrac ln)+\log(n))
\nonumber
\\
&\kern1cm
-(a n+b l)+
        \frac {1} {2}\log(2\pi)+O\left(\frac {1} {an+bl}\right),
\label{tdtwo}
\end{align}
we get
\begin{equation}
F(a,b;n,nt_0+l)=\frac {1} {2\pi\sqrt{
2ab}\,n}\exp\left(d(a,b)n
+e(a,b)\frac {l} {n}-f(a,b)\frac {l^2} {n}+O\left(\frac {l^3} {n^2}\right)\right),
\label{estar}
\end{equation}
where
\begin{align*}
d(a,b)&=-a \log a - b \log b + b \log\left(a + \sqrt{a^2 + b^2}\right) + 
 a \log\left(b + \sqrt{a^2 + b^2}\right),\\
e(a,b)&=-\frac{\sqrt{a^2+b^2}}{
   \left(a+b+\sqrt{a^2+b^2}\right)^2},\\
f(a,b)&=\frac{\sqrt{a^2+b^2}}
   {\left(\sqrt{a^2+b^2}+a+b\right)^2}
\end{align*}
In this computation, the (surprising) identity
\begin{equation*}
2\left(a+\sqrt{a^2+b^2}\right)\left(b+\sqrt{a^2+b^2}\right)
=\left(a+b+\sqrt{a^2+b^2}\right)^2
\end{equation*}
is used several times.

Now, in the sum
\begin{equation*}
\sum_lF(a,b;n,nt_0+l),
\end{equation*}
one restricts $l$ to $|l|<n^{3/5}$ (and such that $nt_0+l$ is an integer).
This has the
effect that this range captures the asymptotically relevant part of the
sum, while $\frac {l} {n}=O(n^{-2/5})$ and $\frac {l^3} {n^2}=O(n^{-1/5})$
tend to zero and are therefore asymptotically negligible, as is the
remaining sum (corresponding to the $l$'s outside this range;
this follows by the fact that, due to \eqref{estar}, at $l\sim\pm n^{3/5}$
we have $l^2/n\sim n^{1/5}$, implying that the corresponding summand is
exponentially small compared with the dominating terms, and, since the
summands with $|l|>n^{3/5}$ are even smaller, the sum over $|l|>n^{3/5}$
is negligible).
This leads to 
\begin{equation*}
\sum_{k=0}^{n-1}F(a,b;n,k)\sim 
\frac {e^{d(a,b)n}} {2\pi\sqrt{2ab}\,n}
\sum_{|l|<n^{3/5}}\exp\left(
-f(a,b)\frac {l^2} {n}\right).
\end{equation*}

Rewrite the sum on the right-hand side above as
\begin{equation*}
n^{1/2}
\sum_{|l|<n^{3/5}}n^{-1/2}\exp\left(
-f(a,b)\frac {l^2} {n}\right).
\end{equation*}
Now the sum (without the outer term $n^{1/2}$) is a Riemann sum
for the integral
\begin{equation*}
\int_{-n^{1/10}}^{n^{1/10}} \exp\left(-f(a,b)x^2\right) dx.
\end{equation*}

By replacing this integral by the integral from $-\infty$ to $\infty$, we make  an error that is exponentially small. Hence, as $n\to\infty$, the sum is asymptotically $n^{1/2}\int_{-\infty}^\infty \exp\left(-f(a,b)x^2\right)\,dx$. This yields
\begin{equation}
\sum_{k=0}^{n-1}F(a,b;n,k)\sim 
\frac {e^{d(a,b)n}} {2\sqrt{2\pi abf(a,b)n}},
\label{tdc}
\end{equation}
thus proving the asymptotics in the first branch of part (b).

Clearly, the above arguments also prove that the infinite sum $\sum_{k=0}^{\infty}F(a,b;n,k)$ has its $n\to\infty$ asymptotics given by the same expression on the right-hand side of~\eqref{tdc}. This,
together with~\eqref{ebn},
implies the statement in the first branch of part (a).

\medskip
If, on the other hand, we have $(1-a)(1-b)<\frac12$, then one
argues as follows. As we just said, the above arguments provide
the asymptotic approximation of
the complete sum $\sum_{k=0}^\infty F(a,b;n,k)$.
In particular, now the dominating part of the sum where $k$ is close
to $(a+b+\sqrt{a^2+b^2})n$ lies outside the range $0\le k\le n-1$.
This means at the same time that this range lies in the tail of
the complete sum that is negligible compared to the complete sum, thus proving the second branch in part (a). 

If we want to determine the exact asymptotics for the case where
$\frac {1} {2}>(1-a)(1-b)$, then we start by observing that the
summand in our sum is increasing in the whole range $0\le k\le n-1$.
It might  therefore be a good idea to reverse the order of summation,
and rewrite the sum as
\begin{align}
\sum_{k=0}^{n-1}\binom kx\binom ky2^{-k-1}
&=\sum_{k=0}^{n-1}F(a,b;n,k)
=\sum_{l=0}^{n-1}F(a,b;n,n-1-l)
\nonumber
\\
&=F(a,b;n,n-1)\sum_{l=0}^{n-1}\frac {F(a,b;n,n-1-l)} {F(a,b;n,n-1)}.
\label{tdthree}
\end{align}
Using again Stirling's formula in the
form~\eqref{tdtwo}, we get
\begin{align}
&
\frac {F(a,b;n,n-1-l)} {F(a,b,n,n-1)}=
\exp\left(l\cdot \log  \big(2(1-a)(1-b)\big)\right.
\nonumber
\\
&
\ \ \ \ \ \ \ \ \ \ \ \ \ \ \ \ \ \ \ \ \ \ \ \ \ \ \ \ \ \ \ \ \ 
\left.
+\left(\frac {l} {n}+\frac {l^2} {n}\right) \frac{2 a b-a-b}{2 (1-a) (1-b) }
   +O\left(\frac {l^3} {n^2}\right)\right)
\label{tdfour}
\end{align}
and
\begin{equation*}
F(a,b,n,n-1)=\frac {\sqrt{(1-a)(1-b)}} {2\pi\sqrt{ab}\, n}
\left(2a^ab^b(1-a)^{1-a}(1-b)^{1-b}\right)^{-n}
\left(1+O\left(\frac {1} {n}\right)\right).
\end{equation*}
Now it is easy to compute the asymptotics of the sum on the
right-hand side of~\eqref{tdthree}. Very roughly, it is
proportional to the geometric series
\begin{equation*}
\sum_{l=0}^{n-1}\big(2(1-a)(1-b)\big)^l,
\end{equation*}
which indeed converges to
\begin{equation*}
\sum_{l=0}^{\infty}\big(2(1-a)(1-b)\big)^l
=\frac {1} {1-2(1-a)(1-b)}
\end{equation*}
since we are in the case where $\frac {1} {2}> (1-a)(1-b)$.
If we do the fineprint, then the estimate~\eqref{tdfour} is not good enough for
the whole range of summation. Therefore, again, we first consider
a smaller range, here $0\le l<n^{2/5}$. In this range, we have
$\frac {l} {n}=O(n^{-3/5})$, $\frac {l^2} {n}=O(n^{-1/5})$,
and $O(\frac {l^3} {n^2})=O(n^{-4/5})$. Hence, these terms are
negligible for the summation in this range. Second, at $l=n^{2/5}$,
the summand is exponentially small compared to $F(a,b;n,n-1)$.
Since $F(a,b;n,n-l-1)$ is decreasing, the summands for $l>n^{2/5}$ are
even smaller. These are $O(n)$ summands, hence the contribution of the
range $n^{2/5}\le l\le n-1$ is exponentially smaller than the
contribution of the other summands. This yields
\begin{align*}
&
\sum_{k=0}^{n-1}\binom kx\binom ky2^{-k-1}
=\frac {\sqrt{(1-a)(1-b)}} {2\pi\sqrt{ab}\big(1-2(1-a)(1-b)\big) n}\\
&
\ \ \ \ \ \ \ \ \ \ \ \ \ \ \ \ \ \ \ \ \ \ \ \ \ \ \ \ \ \
\times
\left(2a^ab^b(1-a)^{1-a}(1-b)^{1-b}\right)^{-n}
\left(1+O\left(\frac {1} {n}\right)\right),
\end{align*}
which implies the asymptotic formula in the second branch in part (b). \end{proof}

The asymptotics of $\M(AD_n^{i,j})/\M(AD_n)$ follows immediately from Lemma~\ref{tda}. We obtain the following result, which gives a satisfying answer to the question that was the original motivation for this paper.

\begin{theo}
\label{tdb}
As $n,i,j\to\infty$ so that $i/n\to a$, $j/n\to b$, we have
\begin{equation}
\frac{\M(AD_n^{i,j})}{\M(AD_n)} 
\sim
\begin{cases}
  \frac
  {(a+b+\sqrt{a^2+b^2})\left(\frac{(a+\sqrt{a^2+b^2})^b(b+\sqrt{a^2+b^2})^a}{a^a b^b}\right)^n}
  {2\sqrt{2\pi n}\sqrt{ab\sqrt{a^2+b^2}}},  & \text{\rm if\ } (1-a)(1-b)>1/2, \\
  \\
  \frac{\sqrt{(1-a)(1-b)}\left(2a^ab^b(1-a)^{1-a}(1-b)^{1-b}\right)^{-n}}{4\pi n\sqrt{ab}\left(\frac12-(1-a)(1-b)\right)},  & \text{\rm if\ } (1-a)(1-b)<1/2.
\end{cases}
\label{edc}
\end{equation}

\end{theo}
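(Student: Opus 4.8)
The plan is to deduce Theorem~\ref{tdb} immediately from Theorem~\ref{tba}(a) together with Lemma~\ref{tda}(b), exactly as was done for Theorem~\ref{tbaa}. Recall that Theorem~\ref{tba}(a) gives the exact identity
\[
\frac{\M(AD_n^{i,j})}{\M(AD_n)}=\sum_{k=0}^{n-1}\binom k{i-1}\binom k{j-1}\frac{1}{2^{k+1}},
\]
so the left-hand side of~\eqref{edc} is literally the sum appearing in the numerator of the left-hand side of~\eqref{eda}, with $x=i-1$ and $y=j-1$. Since $i/n\to a$ and $j/n\to b$, we also have $(i-1)/n\to a$ and $(j-1)/n\to b$, so the hypotheses of Lemma~\ref{tda}(b) are met with these values of $x$ and $y$.

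The first step is therefore to invoke Lemma~\ref{tda}(b) with $x=i-1$, $y=j-1$, which gives precisely the two-branch asymptotic expression on the right-hand side of~\eqref{edc} --- the formulas in~\eqref{edaa} and~\eqref{edc} are identical. The one point that needs a brief remark is that Lemma~\ref{tda}(b) is stated for the sum with $x=an$, $y=bn$ exactly, whereas here $x=i-1$ and $y=j-1$ only satisfy $x/n\to a$, $y/n\to b$. One should check that replacing $an$ by $i-1$ (a quantity differing from $an$ by $o(n)$, in fact by $O(1)+o(n)$) does not affect the stated asymptotics. This is routine: in the Stirling expansions leading to~\eqref{estar} and~\eqref{tdfour}, the relevant quantities are $\log\Gamma(x+1)$ etc., and a shift of the argument by an amount that is $o(n)$ changes $d(a,b)n$ only through the continuity of $d$ in $a$ (and the same for $e,f$ and for $F(a,b,n,n-1)$); more carefully, writing $i-1=an+\delta_n$ with $\delta_n=o(n)$ one re-runs the saddle point computation with the perturbed parameter, and the leading exponential and polynomial prefactor are unchanged because $d$, the prefactor exponents, and $f(a,b)$ are continuous functions of $a,b$ and because an $O(1)$ additive shift in $x$ contributes only a subexponential, in fact bounded, multiplicative factor $\binom{k}{an}/\binom{k}{an+O(1)}=1+o(1)$ uniformly in the relevant range of $k$. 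I would phrase this as a one-sentence observation rather than a separate lemma.

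Concretely, the proof reads: \textbf{Proof of Theorem~\ref{tdb}.} By Theorem~\ref{tba}(a) we have $\M(AD_n^{i,j})/\M(AD_n)=\sum_{k=0}^{n-1}\binom k{i-1}\binom k{j-1}2^{-k-1}$. Since $i/n\to a$ and $j/n\to b$ with $0<a,b<1$, applying Lemma~\ref{tda}(b) (with the roles of $x,y$ played by $i-1,j-1$, which is legitimate since the asymptotic analysis there depends on $x,y$ only through $x/n\to a$, $y/n\to b$, a shift of the arguments by $O(1)$ being absorbed into the $1+o(1)$ factors) yields exactly the two branches of~\eqref{edc}. \epf

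The main obstacle, such as it is, is not conceptual but the bookkeeping in the previous sentence: one must be satisfied that Lemma~\ref{tda}(b) is genuinely being used with $x=an,y=bn$ only \emph{asymptotically}, and that the displayed asymptotic formulas are robust under replacing $an,bn$ by $i-1,j-1$. Given the explicit forms in~\eqref{edaa}, where every ingredient ($a^a$, $b^b$, $\sqrt{a^2+b^2}$, $(1-a)(1-b)$, etc.) is a continuous function of $a$ and $b$, and the error terms in the Stirling expansions~\eqref{tdtwo}, \eqref{estar}, \eqref{tdfour} are uniform over the ranges considered, this robustness is immediate; so in fact there is no real obstacle, and Theorem~\ref{tdb} is a direct corollary. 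It might be worth noting in passing that the first branch also recovers Theorem~\ref{tbaa} (the ratio tends to~$1$) and the second branch recovers the vanishing there, since by~\eqref{ebn} the Delannoy number $D(i-1,j-1)\sim\sum_{k=0}^\infty\binom k{i-1}\binom k{j-1}2^{-k-1}$ has, in the first regime, the \emph{same} asymptotics as the truncated sum.
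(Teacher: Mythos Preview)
Your proposal is correct and follows essentially the same route as the paper: the paper simply states that Theorem~\ref{tdb} ``follows immediately from Lemma~\ref{tda}'' (together with the identity of Theorem~\ref{tba}(a)), which is exactly what you do. Your extra paragraph about the $O(1)$ shift from $x=an$ to $x=i-1$ is unnecessary, since Lemma~\ref{tda} is already \emph{stated} for arbitrary $x,y$ with $x/n\to a$, $y/n\to b$; the simplification $x=an$, $y=bn$ appears only inside its proof.
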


The log-asymptotics of $\M(AD_n^{i,j})/\M(AD_n)$ follows immediately from Theorem~\ref{tdb}. We obtain the following result.

\begin{cor}
\label{cor:tdc}
As $n,i,j\to\infty$ so that $i/n\to a$, $j/n\to b$, we have
\begin{equation}
\frac{1}{n}\log\frac{\M(AD_n^{i,j})}{\M(AD_n)}\to f(a,b),
\label{edd}
\end{equation}
where the function $f(a,b)$ is defined by
\begin{equation}
f(a,b)=
\begin{cases}
  \log{\left(\dfrac{a}{b}+\sqrt{1+\dfrac{a^2}{b^2}}\right)^b\left(\dfrac{b}{a}+\sqrt{1+\dfrac{b^2}{a^2}}\right)^a},  & (1-a)(1-b)>\frac12, \\
\\
  -\log \,2a^ab^b(1-a)^{1-a}(1-b)^{1-b},  & (1-a)(1-b)<\frac12.
\end{cases}
\label{ede}
\end{equation}

\end{cor}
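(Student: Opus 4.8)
The plan is to derive Corollary~\ref{cor:tdc} as an immediate consequence of the asymptotic formulas in Theorem~\ref{tdb}, by taking logarithms, dividing by~$n$, and passing to the limit. Since Theorem~\ref{tdb} has already supplied two-term (leading exponential times polynomial prefactor) asymptotics in each of the two regimes $(1-a)(1-b)>1/2$ and $(1-a)(1-b)<1/2$, the only work left is to observe that the subexponential factors contribute nothing to $\frac1n\log(\cdot)$ in the limit, and then to massage the resulting exponential growth rates into the closed forms displayed in~\eqref{ede}.

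Concretely, in the first regime Theorem~\ref{tdb} gives
\[
\frac{\M(AD_n^{i,j})}{\M(AD_n)}\sim \frac{(a+b+\sqrt{a^2+b^2})}{2\sqrt{2\pi n}\,\sqrt{ab\sqrt{a^2+b^2}}}\left(\frac{(a+\sqrt{a^2+b^2})^b\,(b+\sqrt{a^2+b^2})^a}{a^a\,b^b}\right)^{\!n}.
\]
Taking $\log$, the prefactor contributes a term of order $\log n$ (from the $\sqrt n$ in the denominator) plus an $O(1)$ term, both of which vanish after division by~$n$; what survives is
\[
\frac1n\log\frac{\M(AD_n^{i,j})}{\M(AD_n)}\longrightarrow \log\frac{(a+\sqrt{a^2+b^2})^b\,(b+\sqrt{a^2+b^2})^a}{a^a\,b^b}.
\]
It then remains to check the algebraic identity
\[
\frac{(a+\sqrt{a^2+b^2})^b\,(b+\sqrt{a^2+b^2})^a}{a^a\,b^b}
=\left(\frac ab+\sqrt{1+\frac{a^2}{b^2}}\right)^{\!b}\left(\frac ba+\sqrt{1+\frac{b^2}{a^2}}\right)^{\!a},
\]
which is just a matter of pulling a factor of $1/b$ out of $a+\sqrt{a^2+b^2}$ (so that $a+\sqrt{a^2+b^2}=b\big(\tfrac ab+\sqrt{1+a^2/b^2}\big)$) and, symmetrically, a factor of $1/a$ out of $b+\sqrt{a^2+b^2}$, and noting that the accumulated powers of $a$ and $b$ are exactly cancelled by the denominator $a^a b^b$: indeed $b^b\cdot a^a/(a^a b^b)=1$. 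This yields the first branch of~\eqref{ede}.

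In the second regime Theorem~\ref{tdb} gives
\[
\frac{\M(AD_n^{i,j})}{\M(AD_n)}\sim \frac{\sqrt{(1-a)(1-b)}}{4\pi n\,\sqrt{ab}\,\big(\tfrac12-(1-a)(1-b)\big)}\,\big(2a^ab^b(1-a)^{1-a}(1-b)^{1-b}\big)^{-n},
\]
so again the prefactor is subexponential (of order $1/n$ up to an $a,b$-dependent constant), hence $\frac1n\log(\text{prefactor})\to0$, and
\[
\frac1n\log\frac{\M(AD_n^{i,j})}{\M(AD_n)}\longrightarrow -\log\big(2a^ab^b(1-a)^{1-a}(1-b)^{1-b}\big),
\]
which is the second branch of~\eqref{ede}. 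There is essentially no obstacle here: the content of the corollary is entirely contained in Theorem~\ref{tdb}, and the proof is the routine observation that polynomial and constant prefactors are invisible to $\frac1n\log$, together with the one-line rewriting in the first regime. The only mild point worth flagging is that the statement is silent about the borderline locus $(1-a)(1-b)=1/2$ (where the segment is tangent to the inscribed circle); one simply notes, as the statement does implicitly, that $f$ is defined only off that curve, so no continuity check at the boundary is required.
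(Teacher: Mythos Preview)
Your proof is correct and takes essentially the same approach as the paper, which simply states that the corollary follows immediately from Theorem~\ref{tdb}. You have in fact supplied more detail than the paper does, spelling out the algebraic rewriting in the first regime and the vanishing of the subexponential prefactors.
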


The graph of the function $f(a,b)$ is shown in Figure~\ref{fda}; it is reminiscent of a helmet.

\begin{figure}[t]
\vskip0.3in
\centerline{
\hfill
{\includegraphics[width=0.45\textwidth]{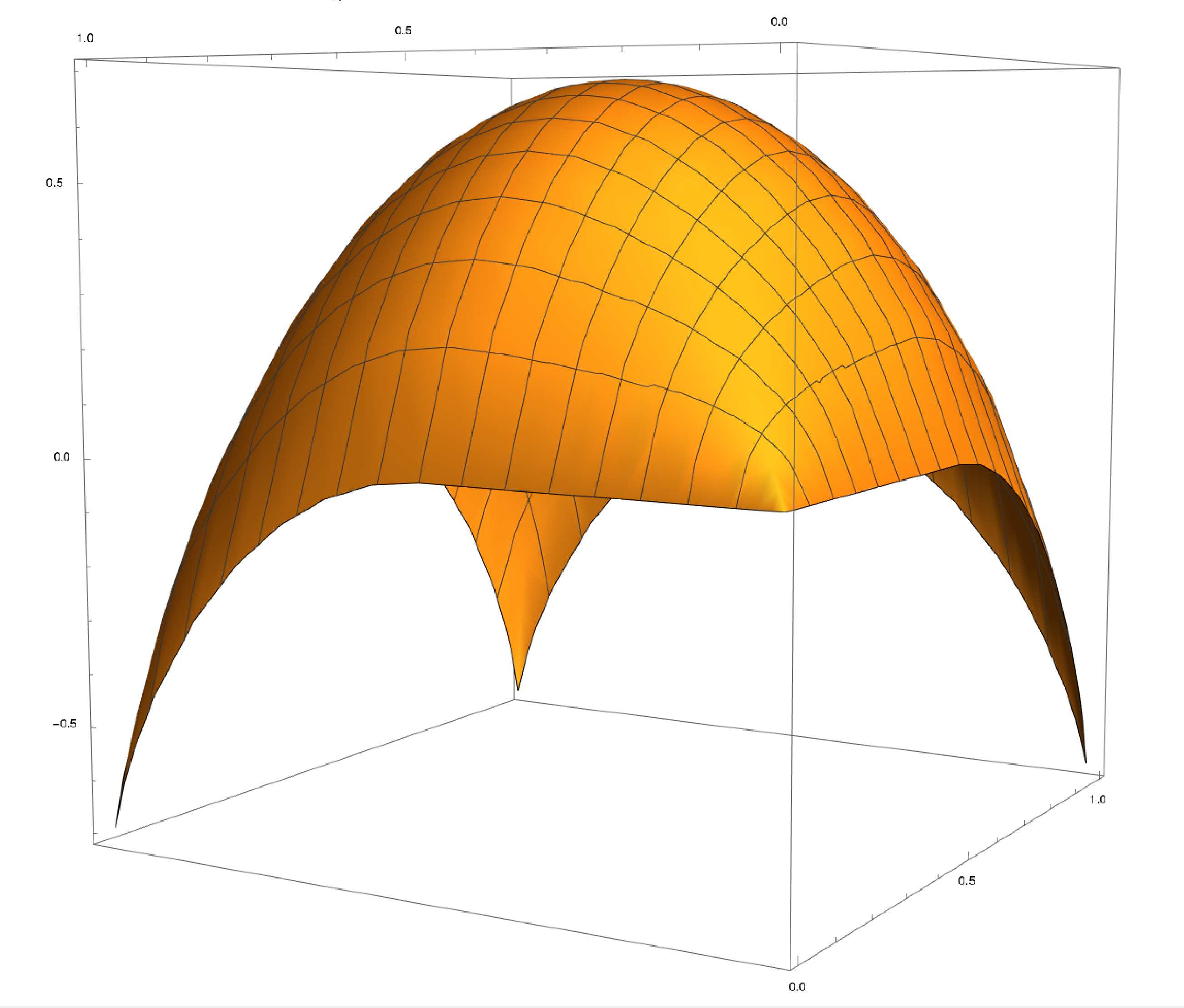}}
\hfill
{\includegraphics[width=0.37\textwidth]{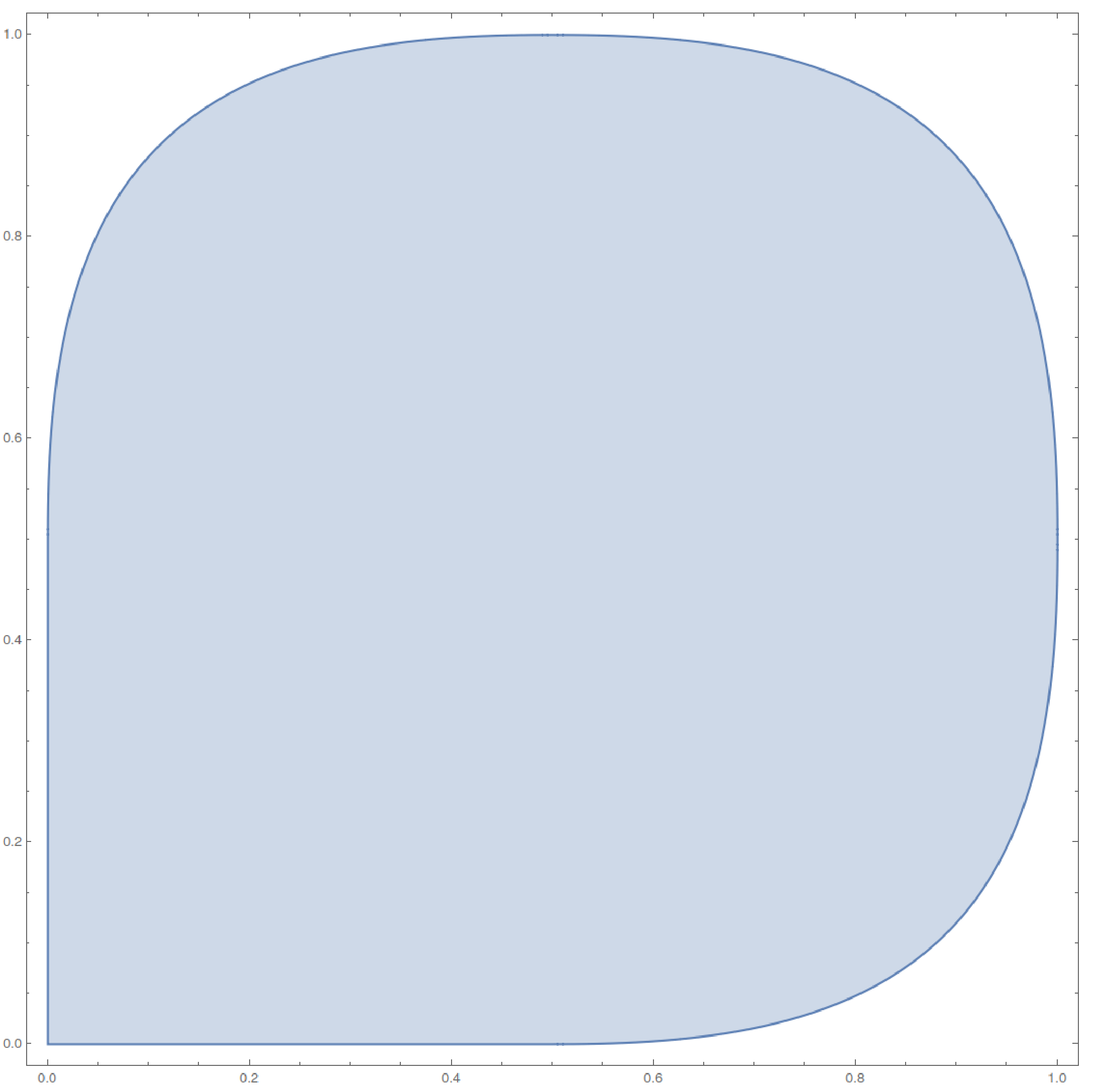}}
\hfill
}
\caption{\label{fda} {\it Left:} The limit shape of $\frac{1}{n}\log\frac{\M(AD_n^{i,j})}{\M(AD_n)}$ as $n,i,j\to\infty$ so that $i/n\to a$, $j/n\to b$. The near corner corresponds to $(a,b)=(0,0)$.  {\it Right:} The critical curve ${\mathcal C}$ along which $\M(AD_n^{i,j})$ and $\M(AD_n)$ have the same log-asymptotics (the boundary of the shaded region). Inside it the former has exponentially more tilings than the latter, and outside it exponentially fewer.}
\end{figure}

\begin{rem} \label{rem:5}
Note that at the three corners different from $(0,0)$ the ``helmet'' surface shown in Figure~\ref{fda} drops deeply below the value at $(0,0)$. This is not clear just from the electrostatic intuition --- it is an effect of the arctic circle phenomenon.
\end{rem}

\begin{rem} \label{rem:6}
The second branch in~\eqref{ede} is invariant under replacing $b$ by $1-b$. This implies that the cases $(a,b)$ and $(a,1-b)$, provided they are both in the second regime (i.e. $(1-a)(1-b)<1/2$, and $(1-a)(1-(1-b))=(1-a)b<1/2$), correspond to dented Aztec diamonds whose numbers of tilings have the same log-asymptotics, a non-trivial symmetry. In particular, the helmet surface in the neighborhood of $(a,b)=(1,1)$ is a mirror image of the helmet surface in the neighborhoods of $(a,b)=(1,0)$ and $(a,b)=(0,1)$ (the latter two are obviously mirror images of each other by the symmetry of the problem).
\end{rem}

\medskip
The answer to the question of how the numbers $\M(AD_n^{i,j})$ and $\M(AD_n)$ compare to each other is given in the following immediate consequence of Corollary~\ref{cor:tdc}.

\begin{cor}
\label{tdd}  
The domino tiling numbers $\M(AD_n^{i,j})$ and $\M(AD_n)$ have the same log-asympto\-tics as $n,i,j\to\infty$ so that $i/n\to a$, $j/n\to b$ if and only if $(a,b)$ is on the ``critical curve'' ${\mathcal C}$ obtained by taking the union of
$$
\{(a,b): 2a^ab^b(1-a)^{1-a}(1-b)^{1-b}=1,\ a>1/2\ \text{\rm or}\ b>1/2\}
$$
with the two line segments $[(0,0),(1/2,0)]$ and $[(0,0),(0,1/2)]$ $($${\mathcal C}$ is the boundary of the shaded region in the picture on the right in Figure~$\ref{fda}$$)$. For $(a,b)$ in the interior of this curve the dented Aztec diamond has exponentially more tilings than $AD_n$, and for $(a,b)$ outside ${\mathcal C}$ exponentially fewer.

\end{cor}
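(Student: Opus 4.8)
The plan is to deduce Corollary~\ref{tdd} directly from Corollary~\ref{cor:tdc}, which already gives the limit $f(a,b)$ of $\frac1n\log(\M(AD_n^{i,j})/\M(AD_n))$ in the two regimes. The two tiling numbers have the same log-asymptotics precisely when $f(a,b)=0$; $AD_n^{i,j}$ has exponentially more (resp.\ fewer) tilings exactly when $f(a,b)>0$ (resp.\ $f(a,b)<0$). So the task reduces to analyzing the sign of $f$ on the unit square, splitting according to whether $(1-a)(1-b)>1/2$ (``inner'' regime, near $(0,0)$) or $(1-a)(1-b)<1/2$ (``outer'' regime).

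First I would handle the outer regime. There $f(a,b)=-\log\bigl(2a^ab^b(1-a)^{1-a}(1-b)^{1-b}\bigr)$, so $f(a,b)=0$ iff $2a^ab^b(1-a)^{1-a}(1-b)^{1-b}=1$, $f(a,b)>0$ iff that product is $<1$, and $f(a,b)<0$ iff it is $>1$. One checks that $g(t):=t^t(1-t)^{1-t}$ is a symmetric function on $[0,1]$ with minimum $1/2$ at $t=1/2$ and maximum $1$ at the endpoints, so $h(a,b):=2g(a)g(b)$ ranges over $[1/2,2]$, equals $1$ along a curve, and is $<1$ only when both $a$ and $b$ are near $1/2$. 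I would verify that the zero set of $h(a,b)-1$ in the outer regime is exactly the curve $\{2a^ab^b(1-a)^{1-a}(1-b)^{1-b}=1,\ a>1/2\text{ or }b>1/2\}$ from the statement (the branch with $a,b$ both $<1/2$ lies in the inner regime, as I verify next), and that $h<1$ — i.e.\ $f>0$ — holds on the bounded side of this curve, $h>1$ on the unbounded side.

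Next I would treat the inner regime $(1-a)(1-b)>1/2$, which forces both $a<1/2$ and $b<1/2$ and in particular is a neighborhood of the origin. Here $f(a,b)=\log\bigl((\tfrac ab+\sqrt{1+\tfrac{a^2}{b^2}})^b(\tfrac ba+\sqrt{1+\tfrac{b^2}{a^2}})^a\bigr)$; since each factor $x+\sqrt{1+x^2}$ exceeds $1$ for $x>0$, we get $f(a,b)>0$ strictly throughout the interior of this regime, so there is no part of the critical curve here. What remains is the boundary $(1-a)(1-b)=1/2$ between the two regimes: I would check that the two branches of $f$ agree there (continuity of the limit), confirm this common value is positive except where it meets the axes, and identify the portions of the coordinate axes $\{b=0,\ 0\le a\le 1/2\}$ and $\{a=0,\ 0\le b\le 1/2\}$ as the remaining pieces of $\mathcal C$: when $b=0$, the product $2a^ab^b(1-a)^{1-a}(1-b)^{1-b}$ degenerates to $2a^a(1-a)^{1-a}=2g(a)$, which is $<1$ for $0<a<1/2$, $=1$ at $a=0$ and $a=1/2$, so $f\ge 0$ on that segment with equality only at its endpoints, giving the line segments $[(0,0),(1/2,0)]$ and $[(0,0),(0,1/2)]$ as claimed.

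The main obstacle is the bookkeeping at the interface between the two regimes and along the axes: one must confirm that the piecewise description of $f$ glues into a single continuous function whose zero set is exactly the stated union of the transcendental curve with the two axis segments, and that the ``interior'' (the shaded region in Figure~\ref{fda}, right) is precisely where $f>0$. This is elementary single-variable calculus on $g(t)=t^t(1-t)^{1-t}$ together with the observation that the $a,b<1/2$ branch of $\{2a^ab^b(1-a)^{1-a}(1-b)^{1-b}=1\}$ lies in the region $(1-a)(1-b)\ge1/2$ where $f$ is given by the other, strictly positive branch — hence never contributes a zero — so the only genuine care needed is matching the pieces correctly; none of it is deep.
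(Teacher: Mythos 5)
Your overall strategy --- reading off the sign of the limit function $f(a,b)$ from Corollary~\ref{cor:tdc} in each of the two regimes --- is exactly the paper's, and your treatment of the two \emph{open} regimes is correct: in the outer regime $(1-a)(1-b)<1/2$ the zero set of $f$ is $\{2a^ab^b(1-a)^{1-a}(1-b)^{1-b}=1\}$, and in the inner regime $f>0$ for all $a,b>0$ because the base of the exponential in the first branch of \eqref{ede} exceeds $1$ there. (The point you flag for verification --- that the branch of $\{2a^ab^b(1-a)^{1-a}(1-b)^{1-b}=1\}$ lying in $(0,1/2)^2$ sits inside the region $(1-a)(1-b)>1/2$ and hence contributes no zeros of $f$ --- does need checking, and it is exactly what the restriction ``$a>1/2$ or $b>1/2$'' in the statement encodes.)

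However, your treatment of the two axis segments is wrong, and taken at face value it would \emph{disprove} the corollary rather than prove it. For $b=0$ and $0<a<1/2$ one has $(1-a)(1-b)=1-a>1/2$, so such a point lies in the closure of the \emph{inner} regime; the second branch of \eqref{ede} does not govern the asymptotics there, and evaluating it anyway to get $f=-\log\bigl(2a^a(1-a)^{1-a}\bigr)>0$ gives the wrong value. Moreover, your conclusion ``$f\ge0$ on that segment with equality only at its endpoints'' contradicts the statement being proven, which asserts that the \emph{whole} segment $[(0,0),(1/2,0)]$ lies on the critical locus where the two tiling numbers have the same log-asymptotics, i.e.\ $f=0$ there. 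The correct argument (this is what the paper's appeal to \eqref{eba} and to the proof of Lemma~\ref{tda} amounts to) is: for $j/n\to 0$ and $i/n\to a<1/2$, the peak of the summand in \eqref{eba} sits at $k\approx\bigl(a+b+\sqrt{a^2+b^2}\bigr)n\to 2an<n$, hence inside the summation range, so the finite sum is asymptotic to the full Delannoy number $D(i-1,j-1)$, whose growth is subexponential in $n$ when $b=0$; equivalently, the base of the exponential in the \emph{first} branch of \eqref{ede} tends to $1$ as $b\to0^+$. Hence $f=0$ on all of $[(0,0),(1/2,0)]$ and, by symmetry, on $[(0,0),(0,1/2)]$. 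Note also that Lemma~\ref{tda} and Corollary~\ref{cor:tdc} are stated only for $0<a,b<1$, so the axis cases cannot be settled by substituting boundary values into either closed-form branch; one must return to the sum \eqref{eba}.
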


\begin{proof} The hyperbola $(1-a)(1-b)=1/2$ intersects the boundary of the unit square at the points $(1/2,0)$ and $(0,1/2)$. When the point $(a,b)$ is above it, we are in the regime $(1-a)(1-b)<1/2$. Setting the argument of the logarithm in the second branch in~\eqref{ede} equal to 1 we get the curvilinear portion of the curve ${\mathcal C}$. On the other hand, when the point $(a,b)$ is below the hyperbola, we are in the regime $(1-a)(1-b)>1/2$, and it is the first branch in~\eqref{ede} that gives the asymptotics.
The base in that exponential is greater than 1 for all $a,b>0$. If $a=0$ and $b<1/2$ (or $b=0$ and $a<1/2$), we are in the case $(1-a)(1-b)>1/2$ and it follows from \eqref{eba} and the arguments in the proof of Lemma \ref{tda} that $\M(AD_n^{i,j})$ and $\M(AD_n)$ have the same log-asympto\-tics.  
\end{proof}

Since the maximum of $f(a,b)$ occurs at $(a,b)=(1/2,1/2)$, it follows that, at least asymptotically, the maximum value of the number of domino tilings of the dented Aztec diamonds $AD_n^{i,j}$ occurs when $a=b=1/2$. This also agrees with the electrostatic intuition --- each dent balances itself at equal distances from the two sides for which the space outside them acts as a huge charge of the same sign as the dent.

\begin{theo}
\label{tde}
Let $0<a_1<\cdots<a_k<1$ and  $0<b_1<\cdots<b_k<1$ be fixed, and assume that the segment $[(a_i,0),(0,b_j)]$ crosses the circle inscribed in the unit square, for all $0\leq i,j\leq k$.
Then as $n,i_1,\dotsc,i_k,j_1,\dotsc,j_k\to\infty$ so that $i_1/n\to a_1,\dotsc,i_k/n\to a_k,j_1/n\to b_1,\dotsc,j_k/n\to b_k$, the asymptotics of the ratio between the number of domino tilings of the dented and undented Aztec diamonds is given by

\begin{align}
&
  \frac{\M(AD_n^{i_1,\dotsc,i_k;j_1,\dotsc,j_k})}{\M(AD_n)}
  \sim
  \nonumber
  \\
&\ \ \ \ \ \ \ \ \ \ 
  \frac{2^{-k(k-1)/2}}{4\pi n}
  \prod_{i=1}^k\frac{\sqrt{\frac{1}{a_i}-1}\sqrt{\frac{1}{b_i}-1}}
       {\left(2a_i^{a_i}(1-a_i)^{1-a_i}b_i^{b_i}(1-b_i)^{1-b_i}\right)^n}
       \frac{\prod_{1\leq i<j\leq k}(a_j-a_i)(b_j-b_i)}
            {\prod_{i=1}^k\prod_{j=1}^k\frac12-(1-a_i)(1-b_j)}.
\label{edf}
\end{align}  

\end{theo}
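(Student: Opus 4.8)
\emph{The plan.} The argument I have in mind is purely algebraic up to a standard Stirling computation at the very end. The starting point is the exact determinantal identity already established in the proof of Corollary~\ref{tcd}: combining~\eqref{ebx} with Lemma~\ref{tcc}(a) (the prefactor $\tfrac1{2^k}$ cancels the $2$ coming from~\eqref{ebh}), and shifting all dent indices down by one — which is harmless since $(i_\mu-1)/n\to a_\mu$, $(j_\nu-1)/n\to b_\nu$ — one obtains, for \emph{every} $n$,
\[
  \frac{\M\bigl(AD_n^{i_1,\dots,i_k;j_1,\dots,j_k}\bigr)}{\M(AD_n)}
  =\det_{1\le\mu,\nu\le k}\bigl(T^{(n)}_{\mu\nu}\bigr),\qquad
  T^{(n)}_{\mu\nu}:=\sum_{\ell=0}^{n-1}\binom{\ell}{i_\mu}\binom{\ell}{j_\nu}\frac1{2^{\ell+1}},
\]
where from now on $i_\mu,j_\nu$ denote the shifted indices. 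I would then run the reversal‑of‑summation device from the proof of Lemma~\ref{tda}(b) \emph{inside each entry}: writing $\ell=n-1-c$,
\[
  T^{(n)}_{\mu\nu}=\binom{n-1}{i_\mu}\binom{n-1}{j_\nu}\,2^{-n}\,R^{(n)}_{\mu\nu},\qquad
  R^{(n)}_{\mu\nu}:=\sum_{c=0}^{n-1}\frac{\binom{n-1-c}{i_\mu}}{\binom{n-1}{i_\mu}}\,\frac{\binom{n-1-c}{j_\nu}}{\binom{n-1}{j_\nu}}\,2^{c},
\]
and pull the binomial factors out of the rows and columns of the determinant:
\[
  \det_{1\le\mu,\nu\le k}\bigl(T^{(n)}_{\mu\nu}\bigr)
  =\Bigl(\prod_{\mu=1}^k\binom{n-1}{i_\mu}\Bigr)\Bigl(\prod_{\nu=1}^k\binom{n-1}{j_\nu}\Bigr)\,2^{-kn}\,\det_{1\le\mu,\nu\le k}\bigl(R^{(n)}_{\mu\nu}\bigr).
\]

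\emph{The heart of the proof} is the entrywise limit $R^{(n)}_{\mu\nu}\to\frac1{1-2(1-a_\mu)(1-b_\nu)}$ as $n\to\infty$. For fixed $c$ the $c$-th summand tends to $\bigl(2(1-a_\mu)(1-b_\nu)\bigr)^c$, since each factor $\frac{n-1-r-i_\mu}{n-1-r}=1-\frac{i_\mu}{n-1-r}$ tends to $1-a_\mu$; and since this factor is decreasing in $r$ one has $\frac{\binom{n-1-c}{i_\mu}}{\binom{n-1}{i_\mu}}\le\bigl(\frac{n-1-i_\mu}{n-1}\bigr)^c$, so the $c$-th summand is $\le\bigl(2\cdot\frac{n-1-i_\mu}{n-1}\cdot\frac{n-1-j_\nu}{n-1}\bigr)^c$. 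This is exactly where the hypothesis is used: $[(a_\mu,0),(0,b_\nu)]$ lying outside versus crossing the inscribed circle corresponds to $(1-a_\mu)(1-b_\nu)\gtrless\tfrac12$, and here \emph{all} these segments cross, so $(1-a_\mu)(1-b_\nu)<\tfrac12$ and $2\cdot\frac{n-1-i_\mu}{n-1}\cdot\frac{n-1-j_\nu}{n-1}<1$ for all large $n$ — a geometric, $n$-independent majorant — and dominated convergence gives the limit. Since a determinant is a polynomial in its entries, $\det(R^{(n)})\to\det\bigl(\tfrac1{1-2(1-a_\mu)(1-b_\nu)}\bigr)_{\mu,\nu}$, and this limit is \emph{nonzero}: it is the Cauchy determinant in the distinct numbers $2(1-a_1),\dots,2(1-a_k)$ and $1-b_1,\dots,1-b_k$, all $1-2(1-a_\mu)(1-b_\nu)$ being nonzero. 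Hence $\det(R^{(n)})\sim\det\bigl(\tfrac1{1-2(1-a_\mu)(1-b_\nu)}\bigr)_{\mu,\nu}$.

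\emph{Making everything explicit.} The Cauchy determinant evaluation gives
\[
  \det_{1\le\mu,\nu\le k}\!\Bigl(\frac1{1-2(1-a_\mu)(1-b_\nu)}\Bigr)
  =\frac{2^{k(k-1)/2}\prod_{1\le\mu<\mu'\le k}(a_{\mu'}-a_\mu)\prod_{1\le\nu<\nu'\le k}(b_{\nu'}-b_\nu)}{\prod_{\mu=1}^k\prod_{\nu=1}^k\bigl(1-2(1-a_\mu)(1-b_\nu)\bigr)},
\]
while Stirling's formula — in the same form used to estimate $F(a,b;n,n-1)$ in the proof of Lemma~\ref{tda} — gives $\binom{n-1}{i_\mu}\sim\frac{1}{\sqrt{2\pi n}}\sqrt{\tfrac1{a_\mu}-1}\,\bigl(a_\mu^{a_\mu}(1-a_\mu)^{1-a_\mu}\bigr)^{-n}$, and similarly for $\binom{n-1}{j_\nu}$. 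Substituting these three displays into the product formula for $\det(T^{(n)})$, writing $1-2(1-a_\mu)(1-b_\nu)=2\bigl(\tfrac12-(1-a_\mu)(1-b_\nu)\bigr)$, and collecting the powers of $2$, $\pi$ and $n$, one arrives at the asserted asymptotics~\eqref{edf}.

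\emph{Main obstacle.} The only genuinely delicate point is the interchange of $n\to\infty$ with the infinite summation defining $R^{(n)}_{\mu\nu}$, i.e.\ the dominated‑convergence estimate of the second paragraph; this is where the assumption that \emph{every} segment $[(a_\mu,0),(0,b_\nu)]$ crosses the inscribed circle enters in an essential way, since otherwise the relevant geometric series would diverge and both the method and the answer would change (compare the two branches of Lemma~\ref{tda}). Everything downstream — the Cauchy determinant identity and the Stirling bookkeeping — is routine.
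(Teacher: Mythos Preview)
Your proof is correct and follows essentially the same route as the paper's: start from the exact determinantal identity~\eqref{ebx} with Lemma~\ref{tcc}(a), factor the row/column contributions out of the determinant, reduce to a Cauchy determinant, and finish with Stirling. The only organisational difference is that you pull out the \emph{exact} binomial prefactors $\binom{n-1}{i_\mu}\binom{n-1}{j_\nu}2^{-n}$ first and then show entrywise convergence $R^{(n)}_{\mu\nu}\to\bigl(1-2(1-a_\mu)(1-b_\nu)\bigr)^{-1}$ via dominated convergence, whereas the paper quotes the second branch of Lemma~\ref{tda}(b) for each entry directly; your version thus re-derives that estimate inside the argument, and makes the justification of ``$\det(\text{asymptotics})=\text{asymptotics of }\det$'' slightly more transparent. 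One small wording fix: the bound $2\cdot\frac{n-1-i_\mu}{n-1}\cdot\frac{n-1-j_\nu}{n-1}<1$ is not literally an $n$-independent majorant; what you need (and clearly intend) is that this ratio is eventually $\le\rho$ for some fixed $\rho<1$, which follows from $(1-a_\mu)(1-b_\nu)<\tfrac12$.
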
  

\begin{proof} By the proof of Corollary~\ref{tcd} we have (replacing $n$ by $n-1$ and subtracting 1 from the index of each dent)
\begin{align}
\frac{\M(AD_n^{i_1,\dotsc,i_k;j_1,\dotsc,j_k})}{\M(AD_n)}
&=
\frac{1}{2^k}\det \left((-1)^{i_s+j_t}D_n^{-1}(i_s-1,j_t-1)\right)_{1\leq s,t\leq k}
\nonumber
\\
&=\det \left(\sum_{l=0}^{n-1}{\binom l {i_s-1}}{\binom l {j_t-1}}\frac{1}{2^{l+1}}\right)_{1\leq s,t\leq k}
\label{edg}
\end{align}
where at the second equality we used Lemma~\ref{tcc}(a). By the assumptions in the statement of the theorem, the asymptotics of each entry in the matrix on the right-hand side of~\eqref{edg} is given by the second branch in Lemma~\ref{tda}(b). The determinant obtained by replacing each entry by this asymptotics is easily seen to evaluate to the expression on the right-hand side of~\eqref{edf} --- and since this is clearly non-zero, it gives the asymptotics of the determinant on the right-hand side of~\eqref{edg}; this proves~\eqref{edf}. Indeed, to obtain the evaluation of the determinant, note that the expression
\begin{equation*}
\prod_{s=1}^k\frac{\sqrt{1-a_s}}{4\pi n\sqrt{a_s}}\left(2a_s^{a_s}(1-a_s)^{1-a_s}\right)^{-n}
\end{equation*}
can be pulled out as a common factor from the entries of row $s$, for $s=1,\dotsc,k$. Similarly, the expression
\begin{equation*}
\prod_{t=1}^k\frac{\sqrt{1-b_s}}{\sqrt{b_t}}\left(b_t^{b_t}(1-b_t)^{1-b_t}\right)^{-n}
\end{equation*}
can be pulled out as a common factor from the entries of column $s$, for $s=1,\dotsc,k$. The leftover determinant is 
\begin{equation*}
\det\left(\frac{1}{\frac12-(1-a_s)(1-b_t)}\right)_{1\leq s,t\leq k}.
\end{equation*}
However, a formula for this readily follows from the
evaluation of the Cauchy determinant,
\begin{equation}
\det\left(\frac{1}{1-u_i v_j}\right)_{1\leq i,j\leq k}=
\frac{\prod_{1\leq i<j\leq k}(u_j-u_i)(v_j-v_i)}{\prod_{i=1}^k\prod_{j=1}^k 1-u_iv_j}.  
\label{edh}
\end{equation}
Combining these leads to the expression on the right-hand side of~\eqref{edg}. \end{proof}

\begin{cor}
\label{tdf}
Under the same assumptions as in Theorem~{\em\ref{tde}}, the limit of the difference in entropy per site\footnote{ The entropy of a dimer system on the graph $G$ is $\log \M(G)$. If $G$ has $n$ vertices, the entropy per site is~$(1/n)\log \M(G)$.} between the dented and undented Aztec diamonds is given by

\begin{align}
  \lim_{n\to\infty}\frac{1}{n}\log\frac{\M(AD_n^{i_1,\dotsc,i_k;j_1,\dotsc,j_k})}{\M(AD_n)}
  =-\sum_{i=1}^k\log 2a_i^{a_i}(1-a_i)^{1-a_i}b_i^{b_i}(1-b_i)^{1-b_i}.
\label{edff}
\end{align}  

\end{cor}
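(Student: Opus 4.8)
The plan is to deduce Corollary~\ref{tdf} as an immediate consequence of the asymptotic formula~\eqref{edf} established in Theorem~\ref{tde}. First I would take the logarithm of the right-hand side of~\eqref{edf}. The key observation is that~\eqref{edf} has the structure $(\text{polynomial/exponential factor in }n)\times\prod_{i=1}^k\bigl(2a_i^{a_i}(1-a_i)^{1-a_i}b_i^{b_i}(1-b_i)^{1-b_i}\bigr)^{-n}$, so that upon taking $\tfrac{1}{n}\log$ of both sides, every factor that grows only polynomially in $n$ (the prefactor $\tfrac{2^{-k(k-1)/2}}{4\pi n}$, the square-root factors $\sqrt{\tfrac{1}{a_i}-1}\sqrt{\tfrac{1}{b_i}-1}$, the Vandermonde-type product $\prod_{i<j}(a_j-a_i)(b_j-b_i)$, and the product of the constants $\tfrac12-(1-a_i)(1-b_j)$ in the denominator) contributes a term that is $O\!\bigl(\tfrac{\log n}{n}\bigr)$ and hence vanishes in the limit $n\to\infty$.

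The only surviving contribution is the one coming from the genuinely exponential factor. Taking $\tfrac{1}{n}\log$ of $\prod_{i=1}^k\bigl(2a_i^{a_i}(1-a_i)^{1-a_i}b_i^{b_i}(1-b_i)^{1-b_i}\bigr)^{-n}$ gives exactly $-\sum_{i=1}^k\log\bigl(2a_i^{a_i}(1-a_i)^{1-a_i}b_i^{b_i}(1-b_i)^{1-b_i}\bigr)$, which is the right-hand side of~\eqref{edff}. One should note here that the assumption in Theorem~\ref{tde} guarantees that the right-hand side of~\eqref{edf} is nonzero and positive for all large $n$ (all the factors $\tfrac12-(1-a_i)(1-b_j)$ are nonzero by hypothesis, and the Vandermonde factors are nonzero since the $a_i$ and the $b_j$ are strictly increasing), so the logarithm is well-defined and the passage $\tfrac1n\log$ is legitimate.

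There is essentially no obstacle here: the statement is a routine corollary, and the only thing to be slightly careful about is that $\tfrac{1}{n}\log\tfrac{\M(AD_n^{i_1,\dotsc,i_k;j_1,\dotsc,j_k})}{\M(AD_n)}$ differs from $\tfrac1n\log(\text{RHS of }\eqref{edf})$ by $o(1)$, because $f\sim g$ implies $\tfrac1n\log f - \tfrac1n\log g = \tfrac1n\log(f/g)\to 0$. Putting this together yields~\eqref{edff}. I would write this up in a few lines.

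\begin{proof}
By Theorem~\ref{tde}, the ratio $\M(AD_n^{i_1,\dotsc,i_k;j_1,\dotsc,j_k})/\M(AD_n)$ is asymptotically equal to the right-hand side of~\eqref{edf}, which is a positive quantity for all sufficiently large $n$ (under the hypotheses of Theorem~\ref{tde}, the factors $\frac12-(1-a_i)(1-b_j)$ are nonzero, and the products $\prod_{1\le i<j\le k}(a_j-a_i)(b_j-b_i)$ are nonzero since the $a_i$ and $b_i$ are strictly increasing). Hence
\begin{equation*}
\frac{1}{n}\log\frac{\M(AD_n^{i_1,\dotsc,i_k;j_1,\dotsc,j_k})}{\M(AD_n)}
=\frac{1}{n}\log\Bigl(\text{right-hand side of~\eqref{edf}}\Bigr)+o(1)
\end{equation*}
as $n\to\infty$, because if $f(n)\sim g(n)$ then $\frac1n\log(f(n)/g(n))\to0$. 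Now taking $\frac1n\log$ of the right-hand side of~\eqref{edf}, the prefactor $\frac{2^{-k(k-1)/2}}{4\pi n}$, the factors $\sqrt{\frac1{a_i}-1}\sqrt{\frac1{b_i}-1}$, the Vandermonde-type product $\prod_{1\le i<j\le k}(a_j-a_i)(b_j-b_i)$, and the product $\prod_{i=1}^k\prod_{j=1}^k\bigl(\frac12-(1-a_i)(1-b_j)\bigr)$ all contribute a term of order $O\bigl(\frac{\log n}{n}\bigr)$, which vanishes in the limit. The only remaining contribution comes from the exponential factor:
\begin{equation*}
\frac1n\log\prod_{i=1}^k\bigl(2a_i^{a_i}(1-a_i)^{1-a_i}b_i^{b_i}(1-b_i)^{1-b_i}\bigr)^{-n}
=-\sum_{i=1}^k\log\bigl(2a_i^{a_i}(1-a_i)^{1-a_i}b_i^{b_i}(1-b_i)^{1-b_i}\bigr).
\end{equation*}
Combining these observations yields~\eqref{edff}.
\end{proof}
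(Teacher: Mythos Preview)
Your proposal is correct and takes essentially the same approach as the paper: the corollary is stated without an explicit proof, being an immediate consequence of Theorem~\ref{tde} obtained by taking $\frac{1}{n}\log$ of both sides of~\eqref{edf} and noting that only the exponential factor survives. Your write-up is, if anything, more careful than necessary in verifying the positivity of the right-hand side of~\eqref{edf}.
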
  

\begin{rem} \label{rem:7}
The limiting entropy difference in Corollary~\ref{tdf} can be thought of as the scaling limit of a correlation of the dents. Then Corollary~\ref{tdf} indicates that, when all segments connecting the dents cross the inscribed circle, the scaling limit of this correlation of the dents turns out to be determined by the individual interactions of each dent with the two corners bounding the side on which the dent is --- the quantity $d^{-d}$ is contributed by each such dent-corner pair (where $d$ is the distance from the dent to the corner), and these are simply multiplied together to get the limiting value of the correlation (up to the additive constant $-k\log2$, which could be regarded as the ``background'' entropy per site of the system). 

It would be interesting to work out the correlation of the dents in the general case, when the segments connecting the dents need not cross the inscribed circle.
\end{rem}

\section{Lozenge tilings}

There is a natural analog of the question that motivated the above results for lozenge tilings of hexagons. Let $H_{a,b,c}$ be the hexagonal region on the triangular lattice whose sides have lengths $a$, $b$, $c$, $a$, $b$, $c$ (clockwise from top). A lozenge is the union of two unit triangles that share an edge. A lozenge tiling of a region~$R$ is a covering of $R$ by lozenges, with no gaps or overlaps. Given a hexagonal region $H_{a,b,c}$, which unit up-pointing triangle and which unit down-pointing triangle should one remove from it so that the remaining region has maximum number of lozenge tilings?

Based on the electrostatic intuition described in the introduction, one would expect this maximum to be achieved when each removed unit triangle is at the middle of an appropriate edge --- southern, northwestern or northeastern for the former, and one of the remaining ones for the latter. 

Recall that the Pochhammer symbol $(a)_k$ is defined for $k\geq0$ by
$(a)_k=a(a+1)\cdots(a+k-1)$,
and the ${}_rF_s$ hypergeometric series of parameters $a_1,\ldots,a_r$ and $b_1,\ldots,b_s$ by
$$
\pFq{r}{s}{a_1,\ldots,a_r}{b_1,\ldots,b_s}{z} =
\sum_{k=0}^{\infty} 
\frac{(a_1)_k \cdots (a_r)_k}{(b_1)_k \cdots (b_s)_k} \frac{z^k}{k!}.
$$

\begin{figure}[t]
\vskip0.15in
\centerline{
\hfill
{\includegraphics[width=0.35\textwidth]{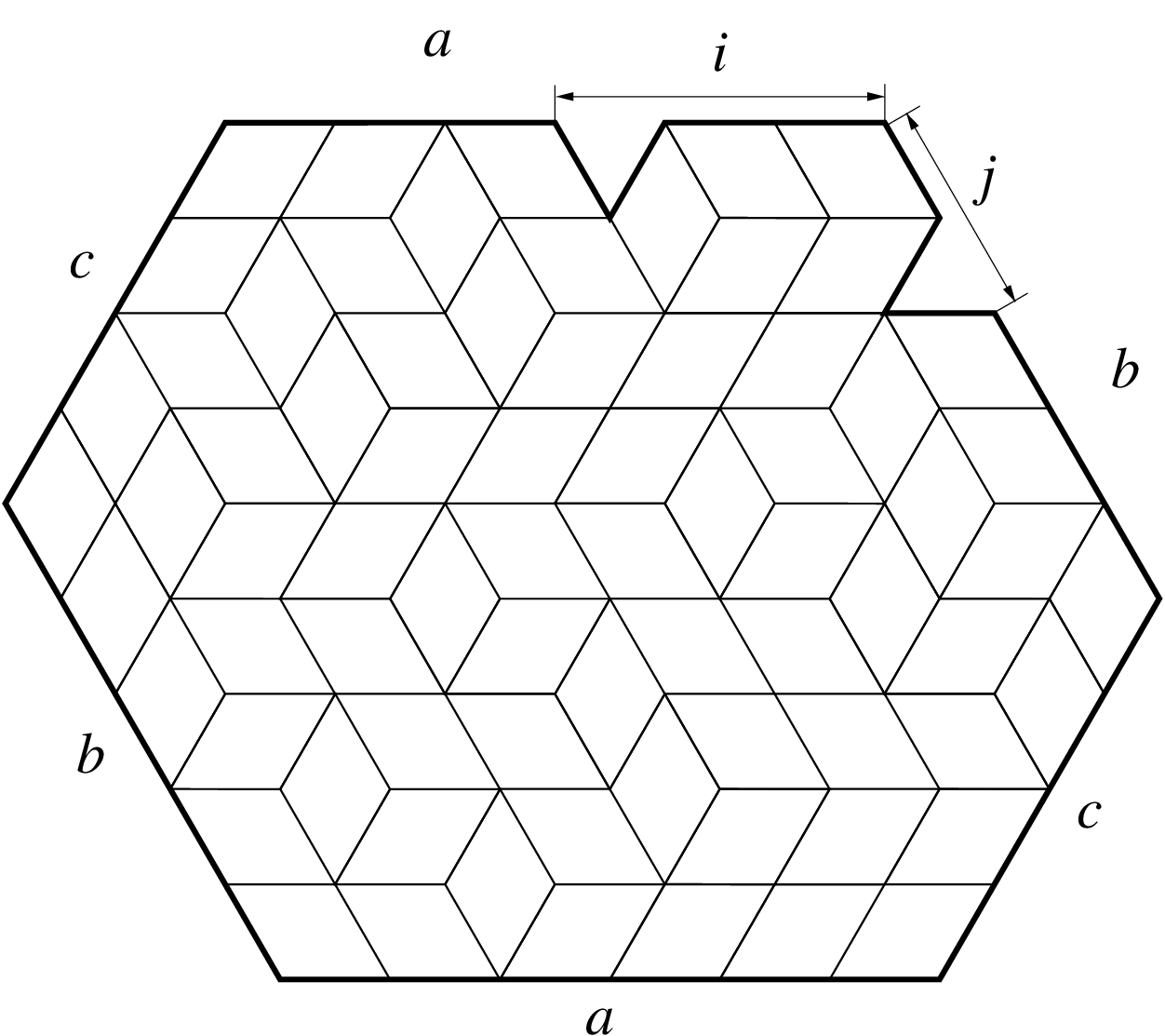}}
\hfill
{\includegraphics[width=0.35\textwidth]{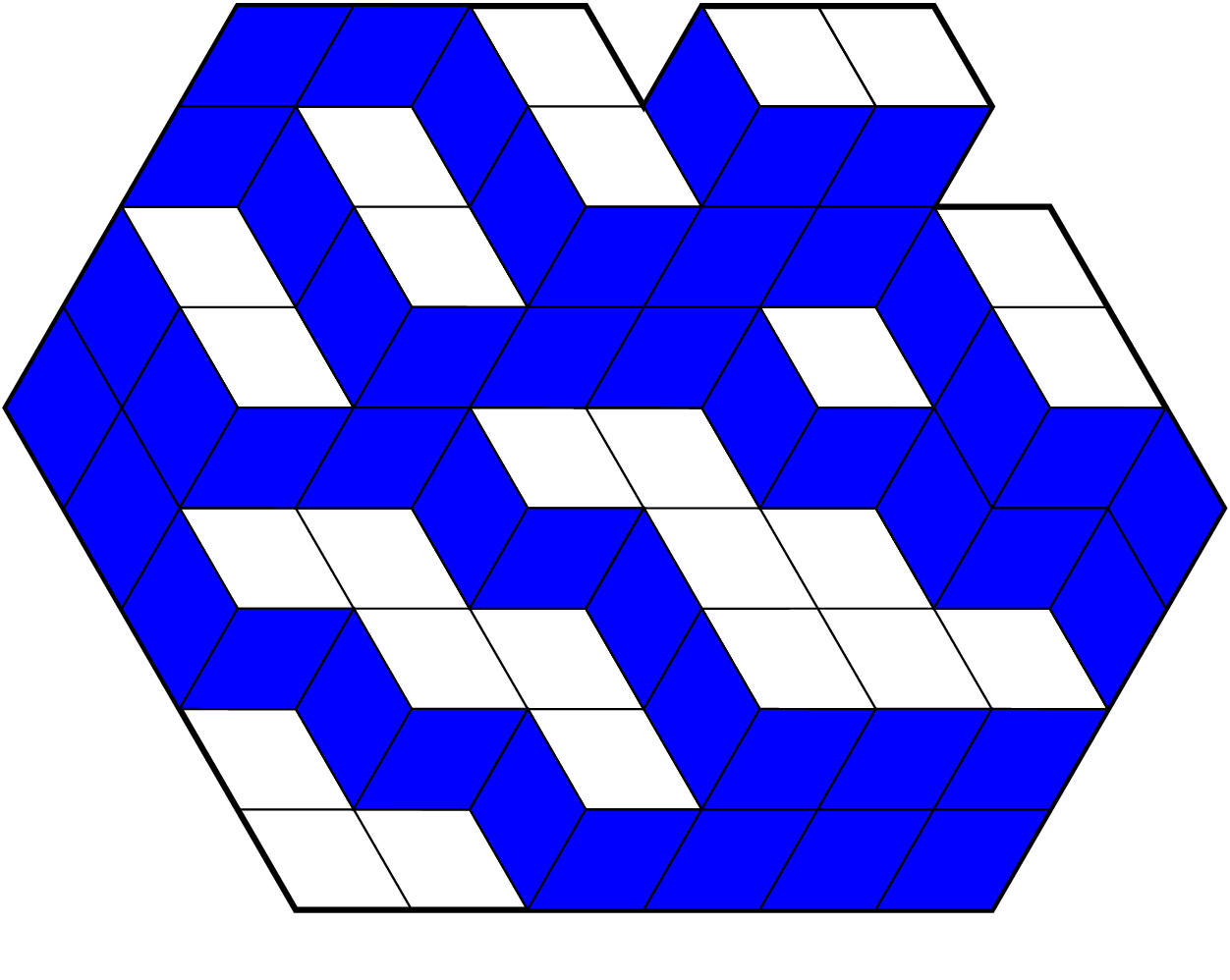}}
\hfill
}
\caption{\label{fea} {\it Left:} A tiling of the dented hexagon $H_{a,b,c}^{i,j}$ for $a=6$, $b=5$, $c=4$, $i=3$, $j=2$. {\it Right:} The family of non-intersecting paths of lozenges encoding the tiling.}
\end{figure}

Let $H_{a,b,c}^{i,j}$ be the dented hexagon obtained from $H_{a,b,c}$ by removing the $i$th unit triangle from along its top side (counted from right to left) and the $j$th unit triangle from along its northeastern side (counted from top to bottom); see Figure~\ref{fea} for an example. The following result is an immediate consequence of \cite[Proposition~3]{boundarydents}.

\begin{theo}
\label{tea}
For any $1\leq i\leq a$, $1\leq j\leq b$, we have:
\begin{equation}
\frac{\M(H_{a,b,c}^{i,j})}{\M(H_{a,b,c})}=
\frac{(b)_a}{(b+c)_a}
\frac{(1+c)_{a-i} (i)_{j-1} (1+b-j)_{j-1}}{(1)_{a-i} (1)_{j-1} (1+b+c-j)_{j-1}}
\pFq{3}{2}{-a+i,-b+j,c}{1-a-b, 1+c}{1}. 
\label{eea}
\end{equation}

\end{theo}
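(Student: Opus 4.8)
The plan is to derive Theorem~\ref{tea} as a direct specialization of \cite[Proposition~3]{boundarydents}. That result provides a closed-form determinant (or, after evaluation, a hypergeometric-type) expression for the number of lozenge tilings of a hexagon with an arbitrary set of triangular dents on two adjacent sides, obtained via the Lindstr\"om--Gessel--Viennot theorem applied to the family of non-intersecting lattice paths that encode such tilings (see the right panel of Figure~\ref{fea}). First I would recall the path encoding: a lozenge tiling of $H_{a,b,c}$ corresponds to a family of $b$ non-intersecting lattice paths whose starting points lie along the top side and whose ending points lie along the northeastern side; removing the $i$th up-triangle from the top and the $j$th down-triangle from the northeastern side simply deletes one prescribed starting point and one prescribed ending point, leaving $b-1$ paths (when $i,j$ are chosen compatibly) or, more uniformly, keeps the determinant formalism of \cite{boundarydents} intact with the dent positions as parameters.

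Next I would write down the specialization of \cite[Proposition~3]{boundarydents} to the case of a single dent on each of the two sides, i.e.\ to $H_{a,b,c}^{i,j}$. The cited proposition is stated for general tuples of dents; setting $k=1$ there collapses the governing determinant to a $1\times1$ determinant, which is exactly a single binomial-sum / hypergeometric expression. The prefactor $(b)_a/(b+c)_a$ is just $\M(H_{a,b,c})$ written via MacMahon's box formula divided into the numerator count, and the ratio of Pochhammer symbols $\tfrac{(1+c)_{a-i}(i)_{j-1}(1+b-j)_{j-1}}{(1)_{a-i}(1)_{j-1}(1+b+c-j)_{j-1}}$ together with the ${}_3F_2\!\left[{-a+i,-b+j,c\atop 1-a-b,1+c};1\right]$ is precisely what the $k=1$ instance of the formula in \cite{boundarydents} produces after one rewrites the relevant binomial coefficients in terms of Pochhammer symbols and normalizes. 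So the core of the argument is a bookkeeping identification: match the indexing conventions of \cite{boundarydents} (which side is ``first,'' whether dents are counted from which end) with the conventions in the statement of Theorem~\ref{tea} (the $i$th triangle from the right on the top, the $j$th from the top on the northeastern side), and then read off the formula.

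I would then carry out the two routine verifications needed to make the identification airtight. First, confirm the normalization: both sides of~\eqref{eea} should equal $1$ when the dents are ``trivial'' or in a consistency check such as $i=a$, $j=1$ (or whatever boundary values make $H_{a,b,c}^{i,j}$ degenerate to a shifted hexagon with a known tiling count), which pins down that no stray factor has been dropped. Second, check the hypergeometric housekeeping: the arguments $-a+i$, $-b+j$ are nonpositive integers under the hypotheses $1\le i\le a$, $1\le j\le b$, so the ${}_3F_2$ is a terminating (polynomial) series and there is no convergence issue at $z=1$; and the lower parameters $1-a-b$, $1+c$ do not produce a pole within the summation range (this needs a one-line check that the terminating index is bounded by $\min(a-i,b-j)$, which is $\le a+b-2 < a+b-1$).

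The main obstacle I expect is purely notational rather than mathematical: \cite[Proposition~3]{boundarydents} is presumably stated in a more general and differently parametrized form (arbitrary dent multisets, possibly with a different labelling origin for the triangles along each side, and possibly with the roles of up- and down-pointing triangles or of the three side-lengths permuted by a symmetry of the hexagon), so the real work is to track those conventions carefully and confirm that the $k=1$ collapse, after converting binomials to Pochhammer symbols via $(x)_m = \Gamma(x+m)/\Gamma(x)$ and reindexing the summation, yields exactly the right-hand side of~\eqref{eea} with no sign errors or off-by-one shifts in the parameters. Once the dictionary between the two sets of conventions is fixed, the proof is a direct substitution, and I would present it as such: ``Specializing \cite[Proposition~3]{boundarydents} to $k=1$ and rewriting the resulting single binomial sum in hypergeometric notation gives~\eqref{eea}.''
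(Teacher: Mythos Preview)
Your approach is correct and essentially identical to the paper's: the paper's entire proof is the single sentence ``Use the formula in \cite[Proposition~3]{boundarydents} and the fact that, by MacMahon's boxed plane partition formula, $\M(H_{a,b,c})=\prod_{i=0}^{a-1}\frac{(b+i+1)_c}{(1+i)_c}$.'' Your proposal elaborates on the same derivation (specialize the cited proposition, divide by MacMahon's formula, repackage in Pochhammer/hypergeometric notation), with some extra sanity checks the paper does not bother to spell out.
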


\begin{proof} Use the formula in \cite[Proposition~3]{boundarydents} and the fact that, by MacMahon's boxed plane partition formula~\cite{MacM}, $\M(H_{a,b,c})=\prod_{i=0}^{a-1}\frac{(b+i+1)_c}{(1+i)_c}$. \end{proof}

We are interested in the asymptotic behavior of $\M(H_{a,b,c}^{i,j})/\M(H_{a,b,c})$ as $a,b,c,i,j\to\infty$ so that $a\sim An$, $b\sim Bn$, $c\sim Cn$ for some fixed $A,B,C>0$, and $i\sim \al An$, $j\sim \be Bn$ for some fixed $0<\alpha,\beta<1$.
Draw the dented hexagons $H_{a,b,c}^{i,j}$ so that they are centered at the origin, and as the parameters tend to infinity as specified rescale them by a homothety of factor $1/n$ through the origin; then the rescaled dented hexagons approach a hexagon $H$ of side-lengths $A$, $B$, $C$, $A$, $B$, $C$ (clockwise from top) centered at the origin, with two marked points $P_0$ and $Q_0$ on the northern and northeastern sides corresponding to the location of the dents.
The following analog of Theorem~\ref{tbaa} holds.

\begin{theo}
\label{teb}

Let $E$ be the ellipse inscribed in $H$, and $P_0$ and $Q_0$ the scaling limits of the positions of the dents on the northern and northeastern sides of $H$, respectively.
Then as $a,b,c,i,j\to\infty$ as described in the previous paragraph, we have 
\begin{equation}
\lim_{n\to\infty}\frac{\M(H_{a,b,c}^{i,j})}{{\binom {i+j-2} {j-1}}\M(H_{a,b,c})}=
\begin{cases}
  1,  & \text{\rm if the segment $[P_0,Q_0]$ is outside $E$}, \\
  0,  & \text{\rm if the segment $[P_0,Q_0]$ crosses $E$}.
\end{cases}
\label{eeb}  
\end{equation}

\end{theo}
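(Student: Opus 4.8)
The plan is to read off the two cases of~\eqref{eeb} directly from the exact formula in Theorem~\ref{tea}, in the same spirit as the proof of Theorem~\ref{tbaa} via Lemma~\ref{tda}. First I would use the elementary identity $\binom{i+j-2}{j-1}=(i)_{j-1}/(1)_{j-1}$: dividing the right-hand side of~\eqref{eea} by $\binom{i+j-2}{j-1}$ then just deletes the factor $(i)_{j-1}/(1)_{j-1}$, leaving $\frac{(b)_a}{(b+c)_a}\,\frac{(1+c)_{a-i}(1+b-j)_{j-1}}{(1)_{a-i}(1+b+c-j)_{j-1}}$ times the terminating series
\[
\sum_{k\ge0}\frac{(-a+i)_k(-b+j)_k(c)_k}{(1-a-b)_k(1+c)_k\,k!}.
\]
The terms of this ${}_3F_2$ alternate in sign, and an alternating sum of a unimodal sequence is controlled by cancellation rather than by its peak; so before estimating it I would apply a Thomae transformation for terminating ${}_3F_2(1)$-series to rewrite it (assuming $a-i\le b-j$, say; the other case is symmetric, applying the transformation to the other terminating numerator parameter), for instance as $\frac{(1-a-j)_{a-i}}{(1-a-b)_{a-i}}\sum_{k\ge0}\frac{(-a+i)_k(-b+j)_k}{(1+c)_k(i+j)_k}$, whose terms are now \emph{nonnegative} and supported on $0\le k\le\min(a-i,b-j)$. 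This is the analogue of the fact that the sum $\sum_k\binom kx\binom ky 2^{-k-1}$ in Lemma~\ref{tda} already had positive terms.

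Next I would locate the largest summand of $\Sigma:=\sum_k\frac{(-a+i)_k(-b+j)_k}{(1+c)_k(i+j)_k}$. With $k=tn$ and $a\sim An$, $b\sim Bn$, $c\sim Cn$, $i\sim\al An$, $j\sim\be Bn$, the ratio of consecutive summands tends to $\frac{(A(1-\al)-t)(B(1-\be)-t)}{(C+t)(\al A+\be B+t)}$, and --- a pleasant surprise --- the equation obtained by setting this equal to~$1$ is \emph{linear} in $t$, with unique root $t_0=\frac{A(1-\al)B(1-\be)-C(\al A+\be B)}{A+B+C}$. One checks directly that $t_0<\min\bigl(A(1-\al),B(1-\be)\bigr)$ always, so the only dichotomy is the sign of $t_0$ --- equivalently, whether $A(1-\al)B(1-\be)$ exceeds $C(\al A+\be B)$ or not --- which is the exact counterpart of the condition $(1-a)(1-b)\gtrless\tfrac12$ in Lemma~\ref{tda}. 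A short coordinate computation with the inscribed ellipse (the analogue of the elementary remark recorded right after Lemma~\ref{tda}) shows that $t_0>0$ precisely when the chord $[P_0,Q_0]$ lies outside $E$, and $t_0<0$ precisely when it crosses $E$.

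The two regimes are then handled as the two branches of Lemma~\ref{tda}(b). If $[P_0,Q_0]$ is outside $E$ ($t_0>0$): the summand of $\Sigma$ rises to a peak at $k\approx t_0n$ and falls off afterwards, so expanding the logarithm of the summand to second order in $l=k-t_0n$ by Stirling, restricting to $|l|<n^{3/5}$, and passing from the Gaussian sum to a Gaussian integral gives $\Sigma\sim(\mathrm{const})\,\sqrt n\,e^{Gn}$; feeding in the Stirling asymptotics of the prefactor (one of whose factors, $(1+c)_{a-i}/(1)_{a-i}=\binom{a-i+c}{a-i}$, supplies a compensating $1/\sqrt n$) and of the Thomae factor, one must then check that all exponential rates cancel and the surviving constants combine to exactly~$1$; the cancellation of exponential rates will hinge on an algebraic identity in $A,B,C,\al,\be,t_0$ playing the role of the surprising identity $2(a+\sqrt{a^2+b^2})(b+\sqrt{a^2+b^2})=(a+b+\sqrt{a^2+b^2})^2$ of the Aztec computation. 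If $[P_0,Q_0]$ crosses $E$ ($t_0<0$): the summand of $\Sigma$ is decreasing on the whole summation range, so $\Sigma$ tends to the finite constant $\sum_{k\ge0}r_0^k=1/(1-r_0)$ with $r_0=\frac{A(1-\al)B(1-\be)}{C(\al A+\be B)}<1$, while the exponential rate of the prefactor against $\binom{i+j-2}{j-1}$ is strictly negative, so the ratio tends to~$0$.

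The step I expect to be the real obstacle is pinning down the exact value~$1$ in the first regime: this requires both the exponential-rate identity and a fully honest accounting of every subexponential factor produced by the Thomae transformation and by Stirling's formula --- the bookkeeping that the part of Lemma~\ref{tda}(a) asserting the limit~$1$ sidestepped, because there the relevant sum was literally a truncation of the closed form~\eqref{ebn} for the Delannoy number, a luxury we do not obviously have for the hexagon. The coordinate identification of the sign of $t_0$ with the position of $[P_0,Q_0]$ relative to $E$ is a secondary, purely Euclidean task. Finally, I would note that a softer, computation-free proof is available as a cross-check: by~\cite{CLP} the arctic curve for lozenge tilings of $H$ is $E$, by the Appendix a uniformly random monotone lattice path realizing the pair $(P_0,Q_0)$ concentrates (after rescaling) on the segment $[P_0,Q_0]$, and since $\binom{i+j-2}{j-1}\M(H_{a,b,c})$ counts \emph{independent} pairs consisting of such a path and a tiling of $H_{a,b,c}$ whereas $\M(H_{a,b,c}^{i,j})$ counts the non-intersecting ones, the ratio in~\eqref{eeb} is precisely a probability of non-intersection --- which tends to~$1$ when the segment stays in the frozen region outside~$E$ and to~$0$ when it enters the liquid region inside~$E$, exactly as in the argument of Remark~\ref{rem:3}.
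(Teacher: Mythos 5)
Your plan follows essentially the same route as the paper: start from the exact formula of Theorem~\ref{tea}, apply a Sears/Thomae-type transformation to turn the terminating ${}_3F_2$ into a positive-term sum, locate the peak of the summand and observe that the dichotomy is governed by the sign of $(1-\al)(1-\be)AB-(\al A+\be B)C$ (the paper's condition~\eqref{eec}), identify that sign with the position of $[P_0,Q_0]$ relative to $E$, and then run a Gaussian/Laplace evaluation in the ``outside'' regime and a geometric-series evaluation in the ``crossing'' regime. The paper does exactly this in Lemma~\ref{tec} and Theorem~\ref{ted}, using the transformation recorded in~\eqref{eeoneandahalf} (which puts the peak at a $t_0$ with denominator $\al A+B$ rather than your $A+B+C$ --- a cosmetic difference coming from a different member of the transformation orbit), and then in the proof of Theorem~\ref{teb} compares the result with the Stirling asymptotics~\eqref{eefive} of $\binom{i+j-2}{j-1}$. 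The step you single out as ``the real obstacle'' --- checking that in the outside regime all exponential rates and constants cancel to give exactly~$1$ --- is indeed a substantial bookkeeping task, but it does close: the first branch of~\eqref{eef} turns out to coincide term for term with the right-hand side of~\eqref{eefive}.

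The genuine gap is in the crossing regime, and it is not where you placed your warning flag. There you reduce to ``$\Sigma$ tends to a constant, while the exponential rate of the prefactor against $\binom{i+j-2}{j-1}$ is strictly negative,'' but that strict negativity is a nontrivial inequality in $A,B,C,\al,\be$ --- the exact analogue of the paper's~\eqref{eesix}, i.e.\ $V<U$ --- and nothing in your argument establishes it. It does not follow from soft bounds: the Lindstr\"om--Gessel--Viennot injection only shows that the ratio in~\eqref{eeb} is at most~$1$, not that it decays. The paper proves $V<U$ by a separate comparison: the Laplace analysis applied to the sum extended beyond its truncation point always produces the exponential base $U$ of~\eqref{eethreeandahalf}; when the peak falls outside $[0,a-i]$, the truncated sum sits in the negligible tail of that extended sum, and since the truncated sum's own exponential base is $V$, one concludes $V<U$ strictly. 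Some such argument (or a direct verification of the explicit inequality) must be supplied before your regime-2 conclusion is justified. Finally, note that your ``computation-free cross-check'' is Remark~\ref{rem:8} run backwards: it presupposes the strong concentration statement $(i')$ for the outermost paths of a random tiling, which the paper is careful to treat only as an assumption, so it cannot serve as an independent proof.
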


\begin{rem} \label{rem:8}
The arguments presented in Remark~\ref{rem:3} in Section~2 apply equally well for deducing that the arctic curve for lozenge tilings of the hexagons $H_{a,b,c}$ is the ellipse $E$ inscribed in $H$.

As in Remark~\ref{rem:3}, in order to make our argument, we need to assume that an arctic curve for lozenge tilings of $H_{a,b,c}$ exists. More precisely, assume that there is a convex arc ${\mathscr E}$ tangent to the northern and northeastern sides of the hexagon $H$ so that if $\widehat{\mathscr E}$ is the closed region below ${\mathscr E}$ enclosed between ${\mathscr E}$ and the boundary of $H$ we have:

\medskip
$(i')$ for any open set $U$ containing $\widehat{\mathscr E}$, the family of $c$ non-intersecting paths of lozenges corresponding to a lozenge tiling\footnote{\label{foot:L} It is well-known that lozenge tilings of any simply connected region on the triangular lattice are in one-to-one correspondence with families of non-intersecting paths of lozenges. For instance, the family of non-intersecting paths of lozenges corresponding to the lozenge tiling on the left in Figure~\ref{fea} is shown on the right in the same figure.} of $H_{a,b,c}$ connecting the sides of length $c$ is contained in $U$ with probability approaching 1, as $n\to\infty$; on the other hand, for any open set $V\subsetneq \widehat{\mathscr E}$, with probability approaching 1 this family of $n$ non-intersecting lozenge paths is {\it not} contained in $V$.

\medskip
Then, using exactly the same reasoning as in Remark~\ref{rem:3} (and replacing fact $(ii)$ by its analog for paths with steps $(1,0)$ and $(0,1)$, which follows as a limiting case of \cite[Theorem~1]{BBE},
and also from the more general result in the Appendix of the present article) we obtain that, under the assumption $(i')$, Theorem~\ref{teb} implies that the arc ${\mathscr E}$ must coincide with the shorter of the two arcs of the ellipse $E$ inscribed in $H$ bounded by the points where it touches the northern and northeastern sides of $H$. By symmetry, this in turn implies that the arctic curve for lozenge tilings of $H_{a,b,c}$ is the ellipse $E$ inscribed in $H$.
\end{rem}    

\begin{rem} \label{rem:9}
In the special case when $i$ and $j$ are fixed (so $\alpha=\beta=0$), it follows as a limiting case of Theorem~\ref{teb} that $\M(H_{a,b,c}^{i,j})/\M(H_{a,b,c})\to{\binom {i+j-2} {j-1}}$. This provides an example when by removing some portion of a region the number of tilings goes up $k$ times (in the limit), for any desired positive integer $k$ (simply take $i=k$, $j=2$).
\end{rem}

In the proof of Theorem~\ref{teb} we will employ the following analog of Lemma~\ref{tda}(b). Its proof is presented after Remark~\ref{rem:10} (which follows Corollary~\ref{tee}).

\begin{lem}
\label{tec}
Let $a,b,c,i,j\to\infty$ so that
$a\sim An$, $b\sim Bn$, $c\sim Cn$ for some fixed $A,B,C>0$, and $i\sim \al An$, $j\sim \be Bn$ for some fixed $0<\alpha,\beta<1$.

$(${\rm a}$)$. Then, provided
\begin{equation}
(1-\al)(1-\be)AB-(\al A+\be B)C>0,
\label{eec}
\end{equation}
we have
\begin{align}
&
\pFq{3}{2}{ c,-b+j, -a+i}{ c+1,1-a-b}{1}
\sim
 \sqrt{
\frac {2\pi n(1 -  \al)A(1 - \be)B(A+B)C }
{\big((1 -  \al)A + C\big)\big((1-  \be)B + C\big)(A+B+C)} }
 \nonumber
 \\[5pt]
 &\ \ \ \ \ \ \ \ \ \ \ \ \ \ 
 \times
 \left(\frac {\big((1-  \al)A\big)^{(1-  \al)A}\big((1- \be)B\big)^{(1- \be)B}
(A+B+C)^{A+B+C}C^C}
{(A+B)^{A+B}\big((1 -  \al)A + C\big)^{(1 -  \al)A + C}
\big((1 - \be) B +  C\big)^{(1 - \be) B +  C} }\right)^n.
\label{eed}
\end{align}

$(${\rm b}$)$. If on the other hand $(1-\al)(1-\be)AB-(\al A+\be B)C<0$, then we have

\begin{align}
&
\pFq{3}{2}{ c,-b+j, -a+i}{ c+1,1-a-b}{1}
\sim
 \sqrt{
\frac {(A+B)(\al A+\be B)^3}
{(\al A+B)(A+\be B)} }
\nonumber
 \\[5pt]
 &\ \ \ \ \ \ \ \ \ \ \ \ \ \ 
 \times
 \frac{C}{(\al A+\be B)C-(1-\al)(1-\be)AB}
 \left(\frac{(\al A+B)^{\al A+B}(A+\be B)^{A+\be B}}{(A+B)^{A+B}(\al A+\be B)^{\al A+\be B}}\right)^n.
\label{eee}
\end{align}

 \end{lem}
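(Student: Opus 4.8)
The plan is to convert the ${}_3F_2$ at argument~$1$ into a \emph{positive}-term finite sum and then extract its asymptotics by the Laplace method, in exact parallel with the proof of Lemma~\ref{tda}. First I would apply the classical transformation of a terminating hypergeometric series (a Thomae transformation): writing the series as $\pFq{3}{2}{-p,-b+j,c}{1-a-b,1+c}{1}$ with $p=a-i$, the identity
\[
\pFq{3}{2}{-p,\mu,\nu}{\rho,\sigma}{1}=\frac{(\rho-\mu)_p}{(\rho)_p}\,\pFq{3}{2}{-p,\mu,\sigma-\nu}{1+\mu-\rho-p,\sigma}{1}
\]
applied with $\mu=-b+j$, $\nu=c$, $\rho=1-a-b$, $\sigma=1+c$ has $\sigma-\nu=1$ and $1+\mu-\rho-p=i+j$, so after cancelling $(1)_k=k!$ and simplifying the surviving factorials one obtains
\[
\pFq{3}{2}{c,-b+j,-a+i}{c+1,1-a-b}{1}=\frac{(a+j-1)!\,(b+i-1)!}{(i+j-1)!\,(a+b-1)!}\sum_{k=0}^{\min(a-i,\,b-j)}\frac{(a-i)!\,(b-j)!\,(i+j-1)!\,c!}{(a-i-k)!\,(b-j-k)!\,(i+j-1+k)!\,(c+k)!},
\]
a product of a manifestly positive Pochhammer ratio and a manifestly positive sum. (The ratio equals $(1-a-j)_{a-i}/(1-a-b)_{a-i}$, which simplifies to the displayed quotient of factorials.)

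Next I would treat the two factors separately. The factorial prefactor is handled by Stirling's formula; since $a+j\sim(A+\beta B)n$, $b+i\sim(\alpha A+B)n$, $i+j\sim(\alpha A+\beta B)n$, $a+b\sim(A+B)n$ and the linear parts balance, it is asymptotic to $\sqrt{\tfrac{(\alpha A+\beta B)(A+B)}{(A+\beta B)(\alpha A+B)}}\bigl(\tfrac{(A+\beta B)^{A+\beta B}(\alpha A+B)^{\alpha A+B}}{(\alpha A+\beta B)^{\alpha A+\beta B}(A+B)^{A+B}}\bigr)^{n}$. For the sum, put $k=\kappa n$ and abbreviate $P=(1-\alpha)A$, $Q=(1-\beta)B$, $M=\alpha A+\beta B$; by Stirling the summand is $\sim\Phi(\kappa)\exp\!\bigl(n\,g(\kappa)\bigr)$ with
\[
g(\kappa)=P\log P+Q\log Q+M\log M+C\log C-(P-\kappa)\log(P-\kappa)-(Q-\kappa)\log(Q-\kappa)-(M+\kappa)\log(M+\kappa)-(C+\kappa)\log(C+\kappa)
\]
and $\Phi$ a slowly varying factor, where one must note that the argument $i+j-1+k$ equals $(M+\kappa)n-1$ rather than $(M+\kappa)n$, which contributes a genuine factor $(M+\kappa)/M$ to $\Phi$. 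Since $g'(\kappa)=\log\frac{(P-\kappa)(Q-\kappa)}{(M+\kappa)(C+\kappa)}$, the summand has a unique critical point $\kappa^{*}=\frac{PQ-MC}{P+Q+M+C}=\frac{(1-\alpha)(1-\beta)AB-(\alpha A+\beta B)C}{A+B+C}$, which is positive precisely under hypothesis~\eqref{eec}, and, when positive, also satisfies $\kappa^{*}<\min(P,Q)$.

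If~\eqref{eec} holds, the peak lies in the interior of the summation range and Laplace's method --- made rigorous by the same device as in the proof of Lemma~\ref{tda}, namely restricting to $|k-\kappa^{*}n|<n^{3/5}$ and bounding the tail --- gives the sum $\sim\Phi(\kappa^{*})\sqrt{2\pi n/|g''(\kappa^{*})|}\,e^{n g(\kappa^{*})}$. The identities $P-\kappa^{*}=\frac{(P+M)(P+C)}{A+B+C}$, $Q-\kappa^{*}=\frac{(Q+M)(Q+C)}{A+B+C}$, $M+\kappa^{*}=\frac{(P+M)(Q+M)}{A+B+C}$, $C+\kappa^{*}=\frac{(P+C)(Q+C)}{A+B+C}$ and $|g''(\kappa^{*})|=\frac{A+B+C}{(P-\kappa^{*})(Q-\kappa^{*})}$, together with $P+M=A+\beta B$ and $Q+M=\alpha A+B$, collapse the product of this with the prefactor asymptotics into the first branch of~\eqref{eed}. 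If~\eqref{eec} fails, then $\kappa^{*}<0$, so the summand is decreasing throughout the range; the sum is then dominated by bounded values of $k$, for which the $k$th term is $\sim(PQ/MC)^{k}$, and since $PQ/MC<1$ exactly when~\eqref{eec} fails, the sum converges to $\sum_{k\ge0}(PQ/MC)^{k}=\frac{MC}{MC-PQ}$ (the truncation-and-tail argument again controlling the range $k\ge n^{2/5}$); multiplying by the prefactor asymptotics gives~\eqref{eee}.

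The main obstacle is the constant in front of the asymptotics: pinning down $\Phi(\kappa^{*})$ exactly requires scrupulous tracking of the several ``$\pm1$'' shifts in the factorial arguments (which, unlike genuine $o(n)$ errors, leave $O(1)$ multiplicative traces), and, as in Lemma~\ref{tda}, the passage from the integer sum to a Gaussian integral in case~(a) or to a convergent geometric series in case~(b) needs the explicit truncation-plus-tail bookkeeping. Everything else reduces to routine algebra once the factorizations of $P-\kappa^{*}$ and $|g''(\kappa^{*})|$ recorded above are in hand.
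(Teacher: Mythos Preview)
Your proposal is correct and follows the same overall strategy as the paper --- transform the terminating ${}_3F_2(1)$ into a positive finite sum, locate the critical point via Laplace's method, and split into an interior-maximum case (Gaussian integral) and a boundary case (geometric series), with the same $n^{3/5}$/$n^{2/5}$ truncation devices borrowed from Lemma~\ref{tda}.

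The one genuine difference is the choice of transformation. The paper applies a Bailey-type identity to reach
\[
\frac{(2-a-j)_{a-i}}{(1-a-b)_{a-i}}\sum_{k=0}^{a-i}\frac{\Gamma(a+j-k-1)\,\Gamma(b+c-j+k+1)}{\Gamma(a-i-k+1)\,\Gamma(c+k+1)},
\]
whose critical point is $t_0=\dfrac{PQ-MC}{\alpha A+B}$, whereas your Thomae-type identity produces the symmetric sum over $\dfrac{(a-i)!\,(b-j)!\,(i+j-1)!\,c!}{(a-i-k)!\,(b-j-k)!\,(i+j-1+k)!\,(c+k)!}$ with critical point $\kappa^{*}=\dfrac{PQ-MC}{A+B+C}$. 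Your choice buys noticeably cleaner bookkeeping: the factorizations $P-\kappa^{*}=\frac{(P+M)(P+C)}{A+B+C}$ (and the analogous ones for $Q-\kappa^{*}$, $M+\kappa^{*}$, $C+\kappa^{*}$) and the compact $|g''(\kappa^{*})|=\frac{A+B+C}{(P-\kappa^{*})(Q-\kappa^{*})}$ make the collapse to \eqref{eed} and \eqref{eee} considerably more transparent than the paper's corresponding algebra, and in case~(b) your geometric ratio $PQ/MC$ is simpler than the paper's $\frac{P(Q+C)}{(P+M)C}$ (both converge iff $PQ<MC$, of course). The only caveat you yourself flag --- that the ``$-1$'' in $i+j-1+k$ contributes an honest $O(1)$ factor $(M+\kappa)/M$ to $\Phi$ --- is correctly identified and accounted for.
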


\begin{theo}
\label{ted}
Let $E$ be the ellipse inscribed in $H$, and $P_0$ and $Q_0$ the scaling limits of the
positions of the dents on the northern and northeastern sides of H, respectively.
Then as $a,b,c,i,j\to\infty$ so that $a\sim An$, $b\sim Bn$, $c\sim Cn$
for some $A,B,C>0$, and $i/a\to\al$, $j/b\to\be$ for some $0<\al,\be<1$, we have
\begin{align}
&
\frac{\M(H_{a,b,c}^{i,j})}{\M(H_{a,b,c})}\sim
\nonumber
\\
&\ \ \ \ \ \ \ \ 
\begin{cases}
  \frac{1}{\sqrt{2\pi n}}\sqrt{\frac{\al A\,\be B}{(\al A+\be B)^3}}
  \left(\frac{(\al A+\be B)^{\al A+\be B}}{(\al A)^{\al A}(\be B)^{\be B}}\right)^n,  & \text{\rm if $[P_0,Q_0]$ is outside $E$}, \\
  \\
  \frac{1}{2\pi n}\frac{1}{(\al A+\be B)C-(A-\al A)(B-\be B)}\\[10pt]
  \times\sqrt{\frac{\al A\,\be B(A-\al A+C)(B-\be B+C)(A+B+C)C}{(A-\al A)(B-\be B)(\al A+B)(A+\be B)}}\\[10pt]
  \times \left(\frac{(A-\al A+C)^{A-\al A+C}(B-\be B+C)^{B-\be B+C}(\al A+B)^{\al A+B}(A+\be B)^{A+\be B}}
  {(\al A)^{\al A}(A-\al A)^{A-\al A}(\be B)^{\be B}(B-\be B)^{B-\be B}(A+B+C)^{A+B+C}C^C}\right)^n,  & \text{\rm if $[P_0,Q_0]$ crosses $E$}.
\end{cases}
\label{eef}  
\end{align}

\end{theo}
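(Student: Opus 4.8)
The plan is to combine the exact evaluation of $\M(H_{a,b,c}^{i,j})/\M(H_{a,b,c})$ provided by Theorem~\ref{tea} with the two-regime asymptotics of the terminating ${}_3F_2$-series supplied by Lemma~\ref{tec}, glued together by Stirling's formula applied to the Pochhammer prefactor. Concretely, one starts from the identity in Theorem~\ref{tea}, writes each Pochhammer symbol $(x)_k$ as $\Gamma(x+k)/\Gamma(x)$ so that the prefactor $\frac{(b)_a}{(b+c)_a}\frac{(1+c)_{a-i}(i)_{j-1}(1+b-j)_{j-1}}{(1)_{a-i}(1)_{j-1}(1+b+c-j)_{j-1}}$ becomes a ratio of products of Gamma functions, and then feeds each factor through the expansion $\log\Gamma(z)=(z-\tfrac12)\log z-z+\tfrac12\log(2\pi)+O(1/z)$ under the scaling $a\sim An$, $b\sim Bn$, $c\sim Cn$, $i\sim\al An$, $j\sim\be Bn$. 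This yields for the prefactor a leading exponential term $\exp(g(A,B,C,\al,\be)\,n)$ times a polynomial-order factor which, because of the imbalance between the number of Gammas surviving in the numerator and in the denominator, turns out to be of order $1/n$ --- explicitly $\frac{1}{2\pi n}$ times an algebraic function of the parameters.

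Next I would substitute for the ${}_3F_2$ its asymptotics from Lemma~\ref{tec}. Part~(a) applies exactly when $(1-\al)(1-\be)AB-(\al A+\be B)C>0$, and the first point to check is that this inequality coincides with the condition that the chord $[P_0,Q_0]$ joining the scaled dent positions on the northern and northeastern sides of $H$ lies outside the inscribed ellipse $E$; this is a short piece of conic-section geometry --- parametrize $P_0,Q_0$ in terms of $\al,\be,A,B,C$, write down the equation of the ellipse inscribed in the (in general non-equilateral) hexagon, and determine when the line through $P_0$ and $Q_0$ misses it. Granting this, the ``outside'' branch of \eqref{eef} is obtained by multiplying the Stirling asymptotics of the prefactor ($\Theta(1/n)$ at polynomial order) by the asymptotics \eqref{eed} of the ${}_3F_2$ (which carries a $\sqrt{2\pi n}$), producing the overall $\frac{1}{\sqrt{2\pi n}}$ prefactor; the ``crossing'' branch is obtained by multiplying instead by \eqref{eee} (which is $\Theta(1)$ at polynomial order), producing the $\frac{1}{2\pi n}$ prefactor.

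The real content is then the algebraic simplification. In the ``outside'' regime one expects wholesale cancellation: the factors $(A+B+C)^{A+B+C}$, $C^C$, $((1-\al)A+C)^{(1-\al)A+C}$ and $((1-\be)B+C)^{(1-\be)B+C}$ in \eqref{eed}, together with the accompanying square roots, should annihilate against the Stirling contributions of $(b)_a$, $(b+c)_a$, $(1+c)_{a-i}$ and $(1+b+c-j)_{j-1}$, leaving only $((\al A+\be B)^{\al A+\be B}/((\al A)^{\al A}(\be B)^{\be B}))^n$ and the stated square-root prefactor $\sqrt{\al A\,\be B/(\al A+\be B)^3}$. In the ``crossing'' regime the cancellation is only partial, leaving the bulkier second branch of \eqref{eef}. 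As in the Aztec-diamond computation behind Theorem~\ref{tdb} (where the identity $2(a+\sqrt{a^2+b^2})(b+\sqrt{a^2+b^2})=(a+b+\sqrt{a^2+b^2})^2$ was used repeatedly), I expect that a handful of non-obvious algebraic identities among the various exponents will be needed to coax the product into the advertised closed form.

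I expect the main obstacle to be exactly this bookkeeping. There is no new conceptual ingredient beyond Theorem~\ref{tea}, Lemma~\ref{tec} and Stirling's formula; the difficulty is purely that of correctly expanding more than a dozen Gamma functions, combining them with the already-bulky output of Lemma~\ref{tec}, and verifying that everything collapses to the compact formulas \eqref{eef} --- a long, delicate, error-prone computation whose success hinges on the special structure of the parameters of the ${}_3F_2$ appearing in Theorem~\ref{tea}. A secondary and much milder obstacle is the geometric translation mentioned above, identifying the algebraic inequality $(1-\al)(1-\be)AB-(\al A+\be B)C>0$ with the statement that the segment $[P_0,Q_0]$ avoids the ellipse inscribed in $H$.
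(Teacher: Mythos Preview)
Your proposal is correct and follows essentially the same approach as the paper: Stirling's formula is applied to the Pochhammer prefactor in Theorem~\ref{tea} to obtain its asymptotics (the paper records this as equation~\eqref{eeg}), the geometric condition on $[P_0,Q_0]$ and $E$ is identified with inequality~\eqref{eec}, and then the two branches of Lemma~\ref{tec} are multiplied in to produce~\eqref{eef}. The only minor difference is one of expectation: the algebraic simplification here is straightforward multiplication of \eqref{eeg} with \eqref{eed} or \eqref{eee}, and no special identities analogous to the one used in the Aztec case are needed.
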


\begin{proof}
It readily follows from Stirling's formula~\eqref{tdtwo} that for $a\sim An$, $b\sim Bn$, $c\sim Cn$, $i\sim\al A n$ and $j\sim \be Bn$, the quantity by which the $_3 F_2$-series in equation~\eqref{eea} is multiplied has asymptotics
\begin{align}
&
\frac{1}{2\pi n}\sqrt{\frac{\al A \,\be B (A-\al A+C)(B-\be B+C)(A+B+C)}
{(A-\al A)(B-\be B)(\al A+\be B)^3(A+B)C}}
\nonumber
\\[10pt]
&\ \ \ \ \ \ \ \ 
\times
\left(\frac{(\al A+\be B)^{\al A+\be B}(A-\al A+C)^{A-\al A+C}(B-\be B+C)^{B-\be B+C}(A+B)^{A+B}}
 {(\al A)^{\al A}(A-\al A)^{A-\al A}(\be B)^{\be B}(B-\be B)^{B-\be B}(A+B+C)^{(A+B+C)}C^C}\right)^n.
\label{eeg}
\end{align}
Note also that --- as it is not hard to verify --- the segment $[P_0,Q_0]$ is contained in the exterior of the ellipse $E$ inscribed in the scaling limit hexagon $H$ if and only if inequality~\eqref{eec} holds. Therefore, Theorem~\ref{tea} combined with Lemma~\ref{tec} and equation~\eqref{eeg} yields formulas~\eqref{eef}.
\end{proof}

\begin{cor}
\label{tee}
As $a,b,c,i,j\to\infty$ so that $a\sim An$, $b\sim Bn$, $c\sim Cn$
for some $A,B,C>0$, and $i/a\to\al$, $j/b\to\be$ for some $0<\al,\be<1$, we have

%
\begin{align}
&
\log\frac{\M(H_{a,b,c}^{i,j})}{\M(H_{a,b,c})}=
\nonumber
\\[10pt]
&\ \ 
\begin{cases}
  n\log \dfrac{(i+j)^{i+j}}{i^ij^j}+O(\log n),  & (a-i)(b-j)\gtrsim(i+j)c, \\
\\
  n\log \dfrac{(a-i+c)^{a-i+c}(b-j+c)^{b-j+c}(i+b)^{i+b}(a+j)^{a+j}}
       {i^i(a-i)^{a-i}j^{j}(b-j)^{b-j}(a+b+c)^{a+b+c}c^c}\\[10pt]
       \ \ \ \ \ \ \ \ \ \ \ \ \ \ \ \ \ \ \ \ \ \ \ \ \ \ \ \ \ \ \ \ \ \ \ \ \ \ \ \ \ \ \ \ \ \ \ \ \ \ \ \ \
       +O(\log n),  & (a-i)(b-j)\lesssim(i+j)c,
\end{cases}
\label{eei}
\end{align}
where the asymptotic inequalities $(a-i)(b-j)\gtrsim(i+j)c$ and $(a-i)(b-j)\lesssim(i+j)c$ stand for $(A-\al A)(B-\be B)>(\al A+\be B)C$ and $(A-\al A)(B-\be B)<(\al A+\be B)C$, respectively.

\end{cor}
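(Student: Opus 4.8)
The plan is to deduce Corollary~\ref{tee} directly from Theorem~\ref{ted}, since the corollary is essentially just a ``taking logarithms and retaining only the exponential order'' version of the two-branch asymptotics established there. Concretely, I would start from the two cases in~\eqref{eef}. In each case the right-hand side is of the shape $P(n)^{\pm1}\cdot\big(\text{base}\big)^n\cdot(\text{algebraic prefactor})$, where the algebraic prefactor is a bounded-order function of $n$ (it is a square root of a ratio of linear-in-$n$ quantities, hence $\Theta(1)$ or $\Theta(n^{-1/2})$ or $\Theta(n^{-1})$), and the polynomial factor $P(n)$ is a fixed power of $\sqrt{2\pi n}$ or $2\pi n$. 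Taking $\log$ of~\eqref{eef} therefore gives $n\log(\text{base})+O(\log n)$ in each branch, and the whole content of the corollary is the identification of these two bases with the explicit expressions displayed in~\eqref{eei}.

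Next I would carry out that identification. In the first branch, the base in~\eqref{eef} is $\dfrac{(\al A+\be B)^{\al A+\be B}}{(\al A)^{\al A}(\be B)^{\be B}}$; writing $i\sim\al An$ and $j\sim\be Bn$, we have $\al A=\lim i/n$, $\be B=\lim j/n$, and $\al A+\be B=\lim (i+j)/n$, so $n\log(\text{base})$ matches $n\log\dfrac{(i+j)^{i+j}}{i^ij^j}$ up to an $O(\log n)$ correction coming from the fact that $i,j,i+j$ differ from $\al An,\be Bn,(\al A+\be B)n$ only by $o(n)$ (and the function $x\mapsto x\log x$ has the Lipschitz-type behaviour making this a lower-order error — this is the one small estimate to spell out, using $x\log x-y\log y=O(|x-y|\log n)$ for $x,y=\Theta(n)$). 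In the second branch, the base in~\eqref{eef} is
\[
\frac{(A-\al A+C)^{A-\al A+C}(B-\be B+C)^{B-\be B+C}(\al A+B)^{\al A+B}(A+\be B)^{A+\be B}}
{(\al A)^{\al A}(A-\al A)^{A-\al A}(\be B)^{\be B}(B-\be B)^{B-\be B}(A+B+C)^{A+B+C}C^C},
\]
and substituting $\al A=\lim i/n$, $A-\al A=\lim(a-i)/n$, $\be B=\lim j/n$, $B-\be B=\lim(b-j)/n$, $C=\lim c/n$, $A+B+C=\lim(a+b+c)/n$, $\al A+B=\lim(i+b)/n$, $A+\be B=\lim(a+j)/n$, this matches the displayed argument of the logarithm in the second line of~\eqref{eei}, again up to $O(\log n)$ by the same Lipschitz estimate applied termwise. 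Finally, the criterion distinguishing the two branches in~\eqref{eef} is ``$[P_0,Q_0]$ is outside $E$'' versus ``$[P_0,Q_0]$ crosses $E$'', which by the remark in the proof of Theorem~\ref{ted} is exactly~\eqref{eec}, i.e. $(1-\al)(1-\be)AB-(\al A+\be B)C>0$; since $(1-\al)A=\lim(a-i)/n$ and $(1-\be)B=\lim(b-j)/n$, this is precisely the asymptotic inequality $(a-i)(b-j)\gtrsim(i+j)c$, and its reverse gives the other branch — matching the legend attached to~\eqref{eei}.

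There is essentially no obstacle here: the corollary is a formal consequence of Theorem~\ref{ted}, and the only thing requiring a line of justification is that replacing the limiting constants $\al A,\be B,C,\dots$ by the actual integer parameters $i,j,c,\dots$ (which agree with $n$ times those constants up to $o(n)$) changes each term $x\log x$ by only $O(\log n)$, so that it can be absorbed into the stated error term. I would state this once as a lemma-free remark: for $x=\Theta(n)$ and $|x-x'|=o(n)$ one has $x\log x-x'\log x'=(x-x')\log x+x'\log(x/x')=o(n)\cdot O(\log n)+\Theta(n)\cdot o(1)=o(n\log n)$, and in fact the error is $O(\log n)$ in the relevant regime where the parameters are within $O(1)$ of their nominal values (or, if one only assumes $i/a\to\al$ etc., an $o(n)$ error, which still suffices to read off the $n\log(\cdot)$ leading term). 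With that in hand, taking $\log$ of~\eqref{eef} and reorganizing the exponents as above yields~\eqref{eei}, completing the proof.

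\epf
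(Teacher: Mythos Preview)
Your approach is exactly the paper's one-line proof: take logarithms in Theorem~\ref{ted}, absorb the algebraic prefactors into $O(\log n)$, and rewrite $n\log(\text{base})$ in terms of the integer parameters using $\al A\sim i/n$, $\be B\sim j/n$, etc.

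There is, however, a slip in your identification step (which mirrors what appears to be a typo in the stated corollary). You assert that $n\log(\text{base})$ matches $n\log\dfrac{(i+j)^{i+j}}{i^ij^j}$, but in fact it matches $\log\dfrac{(i+j)^{i+j}}{i^ij^j}$, \emph{without} the extra factor of~$n$. The point is that $i,j,i{+}j$ are themselves of order~$n$, so
\[
(i+j)\log(i+j)-i\log i-j\log j
\]
is already $\Theta(n)$: the $n\log n$ contributions cancel because the exponents in numerator and denominator have the same sum, and what remains is exactly
\[
n\big[(\al A+\be B)\log(\al A+\be B)-\al A\log\al A-\be B\log\be B\big]+o(n).
\]
Multiplying by a further~$n$ would make the right-hand side $\Theta(n^2)$, which cannot equal $\log\frac{\M(H_{a,b,c}^{i,j})}{\M(H_{a,b,c})}=\Theta(n)$. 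The same remark applies verbatim to the second branch (there too the exponent-sums in numerator and denominator agree, both equalling $2A+2B+2C$, so the $n\log n$ pieces cancel). With the spurious leading~$n$ removed from each branch of~\eqref{eei}, your argument goes through as written.
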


\begin{proof} This follows directly from Theorem~\ref{ted}, using that in the asymptotics under consideration we have $(\al A+\be B)n\sim i+j$, etc. \end{proof} 

\begin{rem} \label{rem:10}
The formulas in Corollary~\ref{tee} can be interpreted as follows.
The quantities $\log \M(H_{a,b,c}^{i,j})$ and $\log \M(H_{a,b,c})$ are the entropies of the dented and undented hexagons, respectively. Thus,  Corollary~\ref{tee} implies that, when $(A-\al A)(B-\be B)-(\al A+\be B)C>0$, we have
\begin{align}
\lim_{n\to\infty}\frac{1}{n}\log\frac{\M(H_{a,b,c}^{i,j})}{\M(H_{a,b,c})}
&=
(\al A+\be B)\log (\al A+\be B) -\al A \log \al A -\be B\log\be B
\nonumber
\\
&=F(\al A)+F(\be B)-F(\al A+\be B),
\label{eex2}
\end{align}
where $F(x)=-x\log x$. On the other hand,  if  $(A-\al A)(B-\be B)-(\al A+\be B)C<0$, we obtain from Corollary~\ref{tee} that
\begin{align}
\lim_{n\to\infty}\frac{1}{n}\log\frac{\M(H_{a,b,c}^{i,j})}{\M(H_{a,b,c})}
=
&F(A+B+C)+F(C)
\nonumber
\\
&+\sum_{v\, {\rm dent},\, C\, {\rm neigbor\, or\, next\, neighbor\, corner}}(-1)^{\#\, \text{\rm corners\, between\,} v\, {\rm and\,} C}F(\de(v,C)),
\label{eey}
\end{align}
where $v$ ranges over the set $\{P_0,Q_0\}$ consisting of the scaling limits of the positions of the two dents on the northern and northeastern sides of the scaling hexagon $H$, $C$ ranges over the four corners of $H$ that are either incident to the side $e$ containing $v$, or are incident to a side next to $e$, and $\de$ is the distance calculated around the perimeter of the hexagon.


It is quite remarkable that in both cases the limiting entropy difference can be expressed so simply in terms of the function $F(x)=-x\log x$ --- the same function that is used for defining the classical statistical mechanics entropy of a system (in that context, the entropy is defined, up to a multiplicative constant, to be $-\sum_ip_i\log p_i$, where $p_i$ is the probability that the system is in the $i$th state). For dented Aztec diamonds, this only holds when the segment joining the dents crosses the inscribed circle (see formulas~\eqref{ede}).
\end{rem}


\begin{proof}[Proof of Lemma \ref{tec}]
We want to estimate
\begin{equation}
\pFq{3}{2}{ c,-b+j, -a+i}{c+1,1-a-b}{1}
\label{eeone}
\end{equation}
as $n\to\infty$, where $a\sim An$, $b\sim Bn$, $c\sim Cn$, $i\sim \al A n$, and $j\sim \be Bn$. To start with, we apply the transformation formula
(see \cite[Ex.~7, p.~98, terminating form]{BailAA})
\begin{equation*}
\pFq{3}{2}{ a, b, -N}{d, e}{1}
  =
\frac{(-a - b + d + e)_N}{(e)_N}
\pFq{3}{2}{-N, -a + d, -b + d}{d, -a - b + d + e}{1},
\end{equation*}
where $N$ is a non-negative integer.
Thus, the hypergeometric series in~\eqref{eeone} becomes
\begin{align}
&
\frac {(2 - a - j)_{a-i}} {(1 - a - b)_{a-i}}
\pFq{3}{2}{ -a + i, 1, 1 + b + c - j}{1 + c, 2 - a - j}{1}
\nonumber
\\
&
=
\frac {\,\Ga(c+1) \,\Ga(a - i+1) \,\Ga(b + i) }
{\Ga(a + b) \,\Ga(b + c - j+1) \,\Ga(i + j-1) }
\sum_{k=0}^{a-i}
\frac {\Ga(a + j - k-1) \,\Ga(b + c - j + k+1)}
{\Ga(a - i - k+1) \,\Ga(c + k+1)}.
\label{eeoneandahalf}
\end{align}

Hence, our task is to estimate the asymptotics of the sum over~$k$,
under the above limit scheme.
Let
\begin{align*}
F(A,B,C,\al,\be;n,k):=&\frac {\,\Ga(Cn+1) \,\Ga(An - \al An+1) \,\Ga(Bn + \al An) }
{\Ga(An + Bn) \,\Ga(Bn + Cn - \be Bn+1) \,\Ga(\al An + \be Bn-1) }
\nonumber
\\
&\times
\frac {\Ga(An + \be Bn - k-1) \,\Ga(Bn + Cn - \be Bn + k+1)}
{\Ga(An - \al An - k+1) \,\Ga(Cn + k+1)}.
\end{align*}

In a first step, we want to determine the maximum of $F(A,B,C,\al,\be;n,nt)$
as a function in~$t$ (and with $A,B,C,\al,\be,n$ fixed). 
Obviously, we must compute the
derivative of $F(A,B,C,\al,\be;n,nt)$ with respect to~$t$ and
equate it to zero:
\begin{equation*}
\frac {\partial} {\partial t}
\frac {\Ga(An + \be Bn - tn-1) \,\Ga(Bn + Cn - \be Bn + tn+1)}
{\Ga(An - \al An - tn+1) \,\Ga(Cn + tn+1)}=0.
\end{equation*}
This leads to the equation
\begin{align*}
  -\ps(An + \be Bn - tn-1) &+\ps(Bn + Cn - \be Bn + tn+1)
\nonumber
  \\
&+\ps(An - \al An - tn+1) -\ps(Cn + tn+1)=0,
\end{align*}
where, as in the proof of Lemma~\ref{tda}, $\ps(x)$ is the classical digamma function. 

Hence, using again that $\ps(x)\sim\log x$ as $x\to\infty$, in a first
approximation the above equation yields
\begin{align*}
  -\log(An + \be Bn - tn-1) &+\log(Bn + Cn - \be Bn + tn+1)
\nonumber
  \\
&+\log(An - \al An - tn+1) -\log(Cn + tn+1)=0,
\end{align*}
which is equivalent to 
\begin{equation*}
\frac {(Bn + Cn - \be Bn + tn+1)(An - \al An - tn+1)}
{(An + \be Bn - tn-1)(Cn + tn+1)}=1.
\end{equation*}
Since we are interested in the large~$n$ case, asymptotically this leads
to the equation
\begin{equation*}
\frac {(B + C - \be B + t)(A - \al A - t)}
{(A + \be B - t)(C + t)}=1,
\end{equation*}
whose solution is
%
\begin{equation*}
t=\frac {(1-\al)(1-\be)A B  - (\al A  + \be B) C} {\al A + B}.
\end{equation*}
This should be in the range $0<t<A-\al A$. Thus, the lower bound
inequality, $0<t$, says that
\begin{equation}
(1-\al)(1-\be)A B  - (\al A  + \be B) C>0,
\label{eetwo}
\end{equation}
while the upper bound inequality,
$t<A-\al A$, is equivalent to
\begin{equation*}
\frac {(\al A + \be B) (A - \al A + C)} {\al A+ B}
 >0,
\end{equation*}
which is always true since $A,B,C,\al,\be>0$ and $\al<1$.

\medskip
We now concentrate on the case where~\eqref{eetwo} holds, that is, where the maximum
of the summand occurs inside the summation range.
We write
\begin{equation*}
t_0=\frac {(1-\al)(1-\be)A B  - (\al A  + \be B) C} {\al A + B}.
\end{equation*}
Then, by Stirling's approximation in the form~\eqref{tdtwo}
we get
\begin{align*}
&
F(A,B,C,\al,\be;n,nt_0+l)=
\frac {\sqrt{(1 -  \al)A(A+B)(\al A + B)^3C}}
{\big((1 -  \al)A + C\big)(A+B+C)\sqrt{(\al A + \be B)\big((1-  \be)B + C\big)}}
\\
&
\times
\big(d(A,B,C,\al,\be)\big)^n
\exp\left(
e(A,B,C,\al,\be)\frac {l} {n}
-f(A,B,C,\al,\be)\frac {l^2} {n}+O\left(\frac {l^3} {n^2}\right)\right),
\end{align*}
where
\begin{align*}
d(A,B,C,\al,\be)&=
\frac {\big((1-  \al)A\big)^{(1-  \al)A}\big((1- \be)B\big)^{(1- \be)B}
(A+B+C)^{A+B+C}C^C}
{(A+B)^{A+B}\big((1 -  \al)A + C\big)^{(1 -  \al)A + C}
  \big((1 - \be) B +  C\big)^{(1 - \be) B +  C} },
\nonumber
\\
e(A,B,C,\al,\be)&=
\frac {\begin{matrix}
(A \al + B) (- \al^2A^2 + 4 A B - 4  \al A B + B^2 
\kern3.5cm\\\kern3cm
 - 4  \be A B + 2  \al \be A B - 2  \be B^2 + 4 B C - 4 \be B C)\end{matrix}}
      {2 (1 - \be) B(\al A + \be B) ((1 -  \al)A + C) (A + B + C) },
\nonumber
\\
f(A,B,C,\al,\be)&=
\frac {(\al A + B)^3 }
{2 (1 - \be)B (\al A + \be B) ((1 -  \al)A + C) (A + B + C) }.
\end{align*}

Now, in the sum
\begin{equation*}
\sum_lF(A,B,C,\al,\be;n,nt_0+l),
\end{equation*}
one restricts $l$ to $|l|<n^{3/5}$ (and such that $nt_0+l$ is an integer).
As
in the proof of Lemma~\ref{tda},
this has the
effect that this range captures the asymptotically relevant part of the
sum, while $\frac {l} {n}=O(n^{-2/5})$ and $\frac {l^3} {n^2}=O(n^{-1/5})$
tend to zero and are therefore asymptotically negligible, as is the
remaining sum (corresponding to the $l$'s outside this range).
This leads to 
\begin{align*}
\sum_{k=0}^{a-i}F(A,B,C,\al,\be;n,k)&\sim 
\frac {\sqrt{(1 -  \al)A(A+B)(\al A + B)^3C}}
{\big((1 -  \al)A + C\big)(A+B+C)\sqrt{(\al A + \be B)\big((1-  \be)B + C\big)}}
\nonumber
\\
&\times \big(d(A,B,C,\al,\be)\big)^n
\sum_{|l|<n^{3/5}}\exp\left(
-f(A,B,C,\al,\be)\frac {l^2} {n}\right).
\end{align*}
The rest of the argument is the same as in the proof of Lemma~\ref{tda}.
One can approximate the sum by the integral
\begin{equation*}
\int_{-\infty}^\infty\exp\left(
-f(A,B,C,\al,\be)\frac {l^2} {n}\right)\,dl,
\end{equation*}
and one gets
%
\begin{align}
\sum_{k=0}^{a-i}F(A,B,C,\al,\be;n,k)&\sim 
\frac {\sqrt{(1 -  \al)A(A+B)(A \al + B)^3C}}
{\big((1 -  \al)A + C\big)(A+B+C)\sqrt{(\al A + \be B)\big((1-  \be)B + C\big)}}
\nonumber
\\
&\times \frac {\sqrt{\pi n}} {\sqrt{f(A,B,C,\al,\be)}}
\big(d(A,B,C,\al,\be)\big)^n
\nonumber
\\
&\!\!\!\!\!\!\!\!\!\!\!\!\!\!\!\!\!\!\!\!\!\!\!\!\!\!\!\!\!\!\!\!
\sim \sqrt{
\frac {2\pi n(1 -  \al)A(1 - \be)B(A+B)C }
{\big((1 -  \al)A + C\big)\big((1-  \be)B + C\big)(A+B+C)} }
\big(d(A,B,C,\al,\be)\big)^n.
\label{eethreeandahalf}
\end{align}
This proves part (a) of Lemma~\ref{tec}.

Now let the inequality~\eqref{eetwo} hold the opposite way, that is,
\begin{equation*}
(1-\al)(1-\be)A B  - (\al A  + \be B) C<0.
\end{equation*}
Then the maximum of $F(A,B,C,\al,\be;n,k)$ occurs for $k<0$, and therefore
the summand $F(A,B,C,\al,\be;n,k)$ is decreasing for $0\le k\le a-i$.
Using again Stirling's formula in the form~\eqref{tdtwo}, we get
\begin{align}
&
F(A,B,C,\al,\be;n,l)=\sqrt{\frac {(A + B)(\al A + \be B)^3}
{(\al A + B)(A + \be B)^3}}
\left(\frac {(\al A+B)^{\al A+B}(A+\be B)^{A+\be B}}
{(A+B)^{A+B}(\al A+\be B)^{\al A+\be B}}
\right)^{n}
\nonumber
\\
&\ \ \ \ \ \ \ \ 
\times
\left(
\frac {(1-\al)A((1-\be)B+C)} {(A+\be B)C}\right)^l
\exp\left(O\left(\frac {l} {n}\right)+O\left(\frac {l^2} {n^2}\right)
+O\left(\frac {l^3} {n^2}\right)\right).
\label{eefour}
\end{align}
As in the earlier considerations,
in the sum, we would have to restrict $l$ to $0\le l<n^{2/5}$.
The terms for $l>n^{2/5}$ would be negligible. On the other hand, 
the sum of the terms~\eqref{eefour} over all $l\ge0$ is a geometric series which yields
\begin{align*}
\pFq{3}{2}{c,-b+j, -a+i}{ c+1,1-a-b}{1}
&\sim \sqrt{\frac {(A + B)(\al A + \be B)^3}
{(\al A + B)(A + \be B)}}
\left(\frac {(\al A+B)^{\al A+B}(A+\be B)^{A+\be B}}
{(A+B)^{A+B}(\al A+\be B)^{\al A+\be B}}
\right)^{n}
\nonumber
\\
&\times
\frac {C} {(\al A  +\be  B) C-(1-\al)(1-\be)A B },
\quad \text{as $n\to\infty$.}
\end{align*}
This completes the proof of Lemma~\ref{tec}.
\end{proof}

\begin{proof}[Proof of Theorem \ref{teb}]
Stirling's formula~\eqref{tdtwo} readily implies that if $i\sim \al An$ and  $j\sim \be Bn$,
\begin{equation}
{\binom {i+j-2} {i-1}}\sim\frac{1}{\sqrt{2\pi n}}\sqrt{\frac{\al A\,\be B}{(\al A+\be B)^3}}
\left(\frac{(\al A+\be B)^{\al A+\be B}}{(\al A)^{\al A}(\be B)^{\be B}}\right)^n.
\label{eefive}
\end{equation} 
%
oThe first branch of~\eqref{eeb} follows directly by combining~\eqref{eefive} with the first branch of~\eqref{eef}.

Suppose now that $(1-\al)(1-\be)A B  - (\al A  + \be B) C<0$. The statement will follow provided we show that the base of the exponential in the second branch of~\eqref{eef} is strictly smaller than the base of the exponential in~\eqref{eefive}. This amounts to proving that
%
\begin{equation}
  \frac{[(1-\al)A+C]^{(1-\al)A+C}[(1-\be)B+C]^{(1-\be)B+C}(\al A+B)^{\al A+B}(A+\be B)^{A+\be B}}
       {[(1-\al)A]^{(1-\al)A}[(1-\be)B]^{(1-\be)B}(A+B+C)^{A+B+C}C^C(\al A+\be B)^{\al A+\be B}}<1.
\label{eesix}
\end{equation} 
Denote by $U$ and $V$ the bases of the exponentials in Lemma \ref{tec}:
\begin{align*}
U&=\frac{[(1-\al)A]^{(1-\al)A}[(1-\be)B]^{(1-\be)B}(A+B+C)^{A+B+C}C^C}
       {(A+B)^{A+B}[(1-\al)A+C]^{(1-\al)A+C}[(1-\be)B+C]^{(1-\be)B+C}},
\\[10pt]
V&=\frac{[(\al A+B)^{\al A+B}(A+\be B)^{A+\be B}}
       {(A+B)^{A+B}(\al A+\be B)^{\al A+\be B}}.
\end{align*} 
Then the left hand side of \eqref{eesix} is just $V/U$, and proving \eqref{eesix} amounts to proving that $V<U$.

To see this, note that the arguments in the proof of Lemma \ref{tec} also prove that, when replacing the upper limit of the sum on the left hand side of \eqref{eethreeandahalf} by infinity, the resulting infinite sum has asymptotics given by the right hand side of \eqref{eethreeandahalf} --- and that this holds irrespective of the sign of $(1-\al)(1-\be)AB-(\al A+\be B)C$.

Suppose this sign is negative. Then the base of the exponential in the asymptotics of the infinite sum is still $U$. On the other hand, as we saw in the proof of Lemma \ref{tec}, in this case the sum restricted to the range $0\leq k\leq a-i$ lies in the tail of the complete sum, so it is negligible compared to the latter. Since the base of the exponential in the asymptotics of the former is~$V$, this shows $V<U$, and the proof is complete. \end{proof}

\medskip
As mentioned before (see footnote~\ref{foot:L}), lozenge tilings of regions on the triangular lattice are encoded in a one-to-one fashion by families of non-intersecting paths of lozenges, which in turn can be identified with lattice paths on $\Z^2$ for which the allowed steps are $(1,0)$ and $(0,1)$. Encoding lozenge tilings by such families of non-intersecting paths, using the Lindstr\"om--Gessel--Viennot theorem and applying Jacobi's determinant identity as in Corollary~\ref{tcd} we obtain the following result.

\begin{cor}
\label{tef}
Let $H_{a,b,c}^{i_1,\dotsc,i_k,j_1\dotsc,j_k}$ be the dented hexagon obtained from $H_{a,b,c}$ by removing the down-pointing unit triangles at fixed distances $i_1,\dotsc,i_k$ from the top right corner from along its northern side, and the up-pointing unit triangles at fixed distances $j_1,\dotsc,j_k$ from the top right corner from along its northeastern side. Then we have
\begin{equation}
\lim_{n\to\infty}\frac{\M(H_{a,b,c}^{i_1,\dotsc,i_k,j_1\dotsc,j_k})}{M(H_{a,b,c})}
=
\det \left({\binom {i_\mu+j_\nu-2} {j_\nu-1}}\right)_{1\leq \mu,\nu\leq k}.
\label{eel2}
\end{equation}
\end{cor}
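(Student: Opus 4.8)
The plan is to mimic exactly the chain of arguments that led from Theorem~\ref{tba}(a) to Corollary~\ref{tcd}, with the hexagon playing the role of the Aztec diamond and lattice paths with steps $(1,0)$ and $(0,1)$ (i.e. paths of lozenges) replacing Delannoy paths. First I would fix the encoding: by footnote~\ref{foot:L}, a lozenge tiling of $H_{a,b,c}$ is the same as a family of $c$ non-intersecting lattice paths with unit horizontal and vertical steps, running between the two vertical sides of length $c$; the starting points and ending points are the midpoints of the edges along the two slanted sides where these paths enter and leave. Removing the $i$th down-pointing triangle on the top side and the $j$th up-pointing triangle on the northeastern side has the effect of deleting one starting point (labelled by $i$) and one ending point (labelled by $j$) from this configuration. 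For the multi-dent region, one deletes the starting points labelled $i_1,\dots,i_k$ and the ending points labelled $j_1,\dots,j_k$.

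Next I would introduce, in analogy with $\overline{AD}{}^{\,i_1,\dots,i_k;j_1,\dots,j_k}_n$, an auxiliary region $\overline H{}^{\,i_1,\dots,i_k;j_1,\dots,j_k}_{a,b,c}$ obtained from $H_{a,b,c}$ by adding the appropriate $2k$ unit triangles so that the effect on the path encoding is precisely the deletion of the starting points $i_1,\dots,i_k$ and the ending points $j_1,\dots,j_k$ (no rescaling of path weights, since these are ordinary lattice paths of weight~$1$). Let $B=\bigl(\M\text{-count matrix}\bigr)$ be the Lindstr\"om--Gessel--Viennot matrix for the full hexagon $H_{a,b,c}$ in this encoding; its $(\mu,\nu)$-entry is the number of lattice paths from the $\mu$th starting point to the $\nu$th ending point, which is a binomial coefficient, and $\det B=\M(H_{a,b,c})$ by Lindstr\"om--Gessel--Viennot together with MacMahon's box formula. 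Then by Lindstr\"om--Gessel--Viennot applied to the dented configuration,
\[
\M\bigl(\overline H{}^{\,i_1,\dots,i_k;j_1,\dots,j_k}_{a,b,c}\bigr)=\det B^{\{j_1,\dots,j_k\}}_{\{i_1,\dots,i_k\}} .
\]
Dividing by $\det B=\M(H_{a,b,c})$ and invoking Jacobi's determinant identity in the restated form of footnote~\ref{foot:J} gives
\[
\frac{\M\bigl(\overline H{}^{\,i_1,\dots,i_k;j_1,\dots,j_k}_{a,b,c}\bigr)}{\M(H_{a,b,c})}
=\det\Bigl((-1)^{i_\mu+j_\nu}B^{-1}(i_\mu,j_\nu)\Bigr)_{1\le\mu,\nu\le k} .
\]
Finally one relates $\M(H^{i_1,\dots,i_k,j_1,\dots,j_k}_{a,b,c})$ to $\M(\overline H{}^{\,i_1,\dots,i_k;j_1,\dots,j_k}_{a,b,c})$; unlike the Aztec case there is no factor of powers of~$2$, so passing to the limit $n\to\infty$ and using the $k=1$ asymptotics (which, by Theorem~\ref{tea} and Remark~\ref{rem:9}, gives $\M(H^{i,j}_{a,b,c})/\M(H_{a,b,c})\to\binom{i+j-2}{j-1}$, i.e. the limit of a single entry $(-1)^{i+j}B^{-1}(i,j)$ is $\binom{i+j-2}{j-1}$) yields
\[
\lim_{n\to\infty}\frac{\M\bigl(H^{i_1,\dots,i_k,j_1,\dots,j_k}_{a,b,c}\bigr)}{\M(H_{a,b,c})}
=\det\Bigl(\tbinom{i_\mu+j_\nu-2}{j_\nu-1}\Bigr)_{1\le\mu,\nu\le k},
\]
which is \eqref{eel2}.

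The routine but slightly delicate step is the first one: correctly setting up the lattice-path encoding so that the LGV matrix for $H_{a,b,c}$ is literally the matrix of binomials $\binom{i_\mu+j_\nu-2}{j_\nu-1}$ (after the right reindexing), and checking that adding the $2k$ triangles to form $\overline H$ deletes exactly the intended starting/ending points while keeping the region simply connected so LGV applies. The main obstacle I anticipate is not conceptual but bookkeeping: matching the combinatorial labelling of the dents ($i$th triangle from the top right corner, etc.) to the lattice coordinates so that the binomial coefficients come out as $\binom{i+j-2}{j-1}$ rather than some shifted variant, and verifying — as in Remark~\ref{rem:4} for the Aztec case — that the limiting determinant is itself the number of lozenge tilings of a sub-hexagon-with-dents, which makes its non-vanishing transparent and confirms the identification. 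Everything else is a direct transcription of the proof of Corollary~\ref{tcd}.
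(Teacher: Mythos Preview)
Your high-level plan---LGV encoding, Jacobi's identity, then the entrywise $k=1$ limit from Remark~\ref{rem:9}---is exactly the paper's route (the paper's entire proof is the sentence preceding the corollary, plus the identity displayed in the proof of Theorem~\ref{teg}). But the way you set up the encoding is reversed, and this makes the $\overline H$ detour both unnecessary and unjustified.

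In the encoding of footnote~\ref{foot:L} and Figure~\ref{fea}, the full hexagon $H_{a,b,c}$ gives $c$ paths between the two $c$-sides, so its LGV matrix $M$ is $c\times c$. Removing a down-pointing triangle from the top side and an up-pointing triangle from the northeastern side does not delete a starting/ending point; it \emph{adds} one of each, producing the extra path joining the two dents (this is exactly the picture in Remark~\ref{rem:2} for the Aztec case). Hence the LGV matrix $N$ for $H_{a,b,c}^{i_1,\dots,i_k,j_1,\dots,j_k}$ is $(c+k)\times(c+k)$, with $M$ sitting inside it as the principal submatrix on the original $c$ rows and columns. Jacobi's identity (in its Sylvester form, equivalent to footnote~\ref{foot:J}) applied to $N$ with the $k$ ``dent'' rows and columns gives
\[
\det\bigl(\M(H_{a,b,c}^{i_s,j_t})\bigr)_{1\le s,t\le k}=(\det M)^{k-1}\det N,
\]
i.e.\ $\M(H_{a,b,c}^{i_1,\dots,i_k,j_1,\dots,j_k})/\M(H_{a,b,c})=\det\bigl(\M(H_{a,b,c}^{i_s,j_t})/\M(H_{a,b,c})\bigr)_{s,t}$, which is the identity~\eqref{eel}. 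There is no auxiliary region and no complementation step: the factor $2^{n+1-k}$ in the Aztec argument arose precisely because $\overline{AD}_n$ and $AD_{n+1}$ are genuinely different regions linked by \cite[Theorem~2.1]{CT}, and there is no analog of that here. Your sentence ``one relates $\M(H^{\dots})$ to $\M(\overline H^{\dots})$; unlike the Aztec case there is no factor of powers of~$2$'' has no content unless you can say what $\overline H$ is and why the two counts coincide---and in fact they need not, since one region has triangles removed and the other has triangles added. Once you drop $\overline H$ and apply Jacobi in the direction above, the rest of your argument (Remark~\ref{rem:9} entrywise) is correct.
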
  

It turns out that Theorem~\ref{tde} has an analog for lozenge tilings of hexagons. More precisely, when the positions of the dents are such that each segment joining dents on different sides crosses the inscribed ellipse, the joint correlation of $k$ dents on the northern side and $k$ on the northeastern side has asymptotics given by a simple product.

\begin{theo}
\label{teg}
Let $A,B,C>0$, $0<\al_1<\cdots<\al_k<1$ and  $0<\be_1<\cdots<\be_k<1$ be fixed. Assume that $(1-\al_s)(1-\be_t)AB-(\al_s A+\be_t B)C<0$ for all $0\leq s,t\leq k$ $($i.e.\ the segment joining the scaling limit of the position of each $\alpha$-dent with the scaling limit of the position of each $\be$-dent crosses the ellipse inscribed in the hexagon $H$$)$.
Then as $a,b,c,i_1,\dotsc,i_k,j_1,\dotsc,j_k\to\infty$ so that $a\sim An$, $b\sim Bn$, $c\sim Cn$ for some fixed $A,B,C>0$ and $i_s\sim \al_s An$, $j_t\sim \be_t Bn$, the asymptotics of the ratio between the numbers of domino tilings of the dented and undented hexagons is given by
\begin{align}
&
  \frac{\M(H_{a,b,c}^{i_1,\dotsc,i_k;j_1,\dotsc,j_k})}{\M(H_{a,b,c})}
  \sim
  \left(\frac{1}{2\pi n}\right)^{k}((A+B+C)C)^{nk}\left(\frac{1}{(A+B+C)^{A+B+C}C^C}\right)^{nk/2}
  \nonumber
  \\[10pt]
&\ \ \ \ \ \ \ \ \ \ 
\times  \prod_{s=1}^k\sqrt{\frac{\al_s A(A-\al_s A+C)}{(A-\al_s A)(\al_s A+B)}}
  \left(\frac{(A-\al_s A+C)^{A-\al_s A+C}(\al_s A+B)^{\al_s A+B}}{(\al_s A)^{\al_s A}(A-\al_s A)^{A-\al_s A}}\right)^n
  \nonumber
  \\[10pt]
&\ \ \ \ \ \ \ \ \ \ 
\times \prod_{t=1}^k\sqrt{ \frac{\be_t B(B-\be_t B+C)}{(B-\be_t B)(A+\be_t B)}}
  \left(\frac{(B-\be_t B+C)^{B-\be_t B+C}(A+\be_t B)^{A+\be_t B}}{(\be_t B)^{\be_t B}(B-\be_t B)^{B-\be_t B}}\right)^n
  \nonumber
  \\[10pt]
&\ \ \ \ \ \ \ \ \ \ 
\times \left(\frac{1}{-AB}\right)^k\left(\frac{C(A+B+C)}{AB}\right)^{\binom k 2} 
       \frac{\prod_{1\leq s<t\leq k}(\al_t-\al_s)(\be_t-\be_s)}
            {\prod_{i=1}^k\prod_{j=1}^k 1-(1+C/B)\al_s-(1+C/A)\be_t+\al_s\be_t}.
\label{eek}
\end{align}  
\end{theo}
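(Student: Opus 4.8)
The plan is to proceed exactly as in the proof of Theorem~\ref{tde}, of which the present statement is the lozenge-tiling counterpart. First I would record the finite (i.e.\ pre-limit) determinant identity that underlies Corollary~\ref{tef}: encoding lozenge tilings of $H_{a,b,c}^{i_1,\dotsc,i_k;j_1,\dotsc,j_k}$ by families of non-intersecting lattice paths, applying the Lindström--Gessel--Viennot theorem, and then invoking Jacobi's determinant identity exactly as in the proof of Corollary~\ref{tcd}, one obtains the hexagon analog of~\eqref{edg}, namely
\begin{equation*}
\frac{\M(H_{a,b,c}^{i_1,\dotsc,i_k;j_1,\dotsc,j_k})}{\M(H_{a,b,c})}
=
\det\left(\frac{\M(H_{a,b,c}^{i_s,j_t})}{\M(H_{a,b,c})}\right)_{1\leq s,t\leq k},
\end{equation*}
where the single-dent ratios on the right are furnished by Theorem~\ref{tea}.

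Next I would feed in the asymptotics of the entries. By hypothesis $(1-\al_s)(1-\be_t)AB-(\al_s A+\be_t B)C<0$ for all $s,t$, which --- as already verified in the proof of Theorem~\ref{ted} --- is exactly the condition that the segment joining the scaling limits of the $i_s$-dent and the $j_t$-dent crosses the inscribed ellipse. Hence every entry of the $k\times k$ determinant lies in the ``crossing'' regime, so its asymptotics is given \emph{uniformly} by the second branch of~\eqref{eef}. The key structural observation --- the same one that makes the proof of Theorem~\ref{tde} work --- is that the exponential factor there factorizes as a product of three pieces: one depending only on $\al_s$ (and on $n,A,B,C$), one depending only on $\be_t$, and one common to all entries. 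Therefore, in the Leibniz expansion $\sum_\sigma\sgn(\sigma)\prod_s(\text{entry})_{s,\sigma(s)}$ of the determinant, \emph{every} permutation-product carries the same exponential order; the permutation sum then only acts on the sub-exponential prefactor. Pulling the $\al_s$-dependent pieces out of row $s$ and the $\be_t$-dependent pieces out of column $t$, one is left --- up to the common exponential factor, the product of the row factors, the product of the column factors, and a power of $2\pi n$ --- with
\begin{equation*}
\det\left(\frac{1}{(\al_s A+\be_t B)C-(A-\al_s A)(B-\be_t B)}\right)_{1\leq s,t\leq k}.
\end{equation*}

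The remaining task is the evaluation of this determinant, which I expect to be purely mechanical. A short computation gives
\begin{equation*}
(\al A+\be B)C-(A-\al A)(B-\be B)
=-AB\bigl(1-(1+C/B)\al-(1+C/A)\be+\al\be\bigr),
\end{equation*}
and the Möbius substitution $u_s=\mu-\al_s$, $v_t=(\be_t-\la)/(1-\la\mu)$ with $\la=1+C/B$ and $\mu=1+C/A$ (note $1-\la\mu=-C(A+B+C)/(AB)$) brings the matrix $\bigl(1-\la\al_s-\mu\be_t+\al_s\be_t\bigr)^{-1}$ into Cauchy form, so that~\eqref{edh} applies. Reassembling the common exponential factor, the row and column products (which produce the two products over $s$ and over $t$ in~\eqref{eek}), the power of $2\pi n$, and the Cauchy evaluation (which produces the Vandermonde-type ratio together with the factors $(-1/(AB))^k$ and $(C(A+B+C)/(AB))^{\binom k 2}$) reproduces the right-hand side of~\eqref{eek}. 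Since $\al_1<\dotsb<\al_k$ and $\be_1<\dotsb<\be_k$ are distinct and, by the crossing hypothesis, none of the quantities $(\al_s A+\be_t B)C-(A-\al_s A)(B-\be_t B)$ vanishes, this evaluation is non-zero; together with the uniformity of the exponential order established above, this shows that it is the true leading asymptotics of the determinant, the relative error being $O(1/n)$.

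The one genuinely delicate step is this last passage from ``every entry has the stated asymptotics'' to ``the determinant has the stated asymptotics'': it hinges entirely on the factorization of the exponential rate into a row-factor, a column-factor, and a common factor. Without it, distinct permutation-products in the Leibniz expansion could carry incomparable exponential orders, and the naive entrywise replacement would be unjustified. The other two ingredients --- the pre-limit Lindström--Gessel--Viennot/Jacobi identity and the Cauchy-determinant bookkeeping --- are routine, and I would not dwell on them.
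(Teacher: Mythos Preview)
Your proposal is correct and follows essentially the same approach as the paper's own proof: Lindstr\"om--Gessel--Viennot plus Jacobi's identity to get the $k\times k$ determinant of single-dent ratios, replacement of each entry by the second branch of~\eqref{eef}, extraction of the row-, column-, and common factors, and evaluation of the residual Cauchy-type determinant. The only cosmetic difference is that the paper packages the residual evaluation as the identity~\eqref{een} (reduced to~\eqref{edh} via $u_i=(xy-z)^{-1/2}(y+za_i)$, $v_j=(xy-z)^{-1/2}(x+zb_j)$), whereas you reach the same Cauchy determinant by your M\"obius substitution; and you are more explicit than the paper about why the row/column factorization of the exponential rate legitimizes the entrywise replacement.
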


\begin{proof} By the determinant identity in
footnote~\ref{foot:J}
and the Lindstr\"om--Gessel--Viennot theorem we have
\begin{equation}
\frac{\M(H_{a,b,c}^{i_1,\dotsc,i_k;j_1,\dotsc,j_k})}{\M(H_{a,b,c})}
=
\det \left(\frac{\M(H_{a,b,c}^{i_s,j_t})}{\M(H_{a,b,c})}\right)_{1\leq s,t\leq k}.
\label{eel}
\end{equation}
Replace each entry in the matrix on the right-hand side above by its asymptotics, given by the second branch in Theorem~\ref{ted}. By pulling out common factors along the rows and columns of the resulting matrix --- yielding the factors in the first three lines on the right-hand side of equation~\eqref{eek} --- evaluating the determinant on the right-hand side of~\eqref{eel} amounts to determining
%
\begin{equation}
\det \left(\frac{1}{(\al_s A+\be_t B)C-(1-\al_s)(1-\be_t)AB}\right)_{1\leq s,t\leq k}.
\label{eem}
\end{equation}
However, this follows from the Cauchy matrix-like identity
\begin{equation}
\det \left(\frac{1}{1+x a_i+y b_j +z a_i b_j}\right)_{1\leq i,j\leq k}=(xy-z)^{\binom k 2}\frac{\prod_{1\leq i<j\leq k}(a_j-a_i)(b_j-b_i)}
{\prod_{i=1}^k\prod_{j=1}^k 1+xa_i+yb_j+za_ib_j}.
\label{een}
\end{equation}
Indeed, the latter identity is implied by the evaluation of the Cauchy
determinant in~\eqref{edh} by the choice of $u_i=(xy-z)^{-1/2}(y+z a_i)$
and $v_i=(xy-z)^{-1/2}(x+z b_i)$, $i=1,2,\dots,k$, and subsequent simplification.
This identity implies, as one can readily see, that the determinant in~\eqref{eem} is equal to the expression on the fourth line on the right-hand side of~\eqref{eek}\footnote{ Note that $1-(1+C/B)\al_s-(1+C/A)\be_t+\al_s\be_t$ equals $1/AB$ times $(1-\al_s)(1-\be_t)AB-(\al_s A+\be_t B)C$, and is therefore negative by assumption; these negative signs combine with the sign of $(-1/AB)^k$ to yield an overall positive sign.}. 
\end{proof}

\begin{rem} \label{rem:11}
Why is the asymptotics of the joint correlation of dents given by a simple product in the segment-crossing-ellipse case, but not in general? This seems to be a consequence of the arctic ellipse phenomenon. It would be interesting to understand more clearly the reason for this.
\end{rem}

\begin{cor}
\label{teh}
Under the same assumptions as in Theorem~\ref{teg}, the limit of the difference in entropy per site between the dented and undented hexagons is given by

%
\begin{align}
&
  \lim_{n\to\infty}\frac{1}{n}\log\frac{\M(H_{a,b,c}^{i_1,\dotsc,i_k;j_1,\dotsc,j_k})}{\M(H_{a,b,c})}
\nonumber
\\[10pt]
&\ \ \ \ \ \ 
=\sum_{s=1}^k\log \dfrac{(A-\al_s A+C)^{A-\al_s A+C}(B-\be_s B+C)^{B-\be_s B+C}(\al_s A+B)^{\al_s A+B}(A+\be_s)^{A+\be_s}}
       {{(\al_s A)}^{\al_s A}(A-\al_s A)^{A-\al_s A}{(\be_s B)}^{\be_s B}(B-\be_s B)^{B-\be_s B}(A+B+C)^{A+B+C}C^C}.
\label{eeo}
\end{align}  

\end{cor}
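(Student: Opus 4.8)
The plan is to derive Corollary~\ref{teh} from the determinant identity~\eqref{eel} together with the second branch of the asymptotics~\eqref{eef} in Theorem~\ref{ted}; this route makes transparent \emph{why} the limiting entropy difference is a single sum over the index $s$ rather than a maximum over permutations. Write $\mu_{st}:=\M(H_{a,b,c}^{i_s,j_t})/\M(H_{a,b,c})$, so that~\eqref{eel} reads $\M(H_{a,b,c}^{i_1,\dots,i_k;j_1,\dots,j_k})/\M(H_{a,b,c})=\det(\mu_{st})_{1\le s,t\le k}$. Under the standing hypothesis $(1-\al_s)(1-\be_t)AB-(\al_s A+\be_t B)C<0$ for all $s,t$, each entry $\mu_{st}$ is governed by the second branch of~\eqref{eef}: $\mu_{st}\sim\tfrac{1}{2\pi n}\,c(\al_s,\be_t)\,\lambda(\al_s,\be_t)^n$, where $\lambda(\al,\be)$ is the base of the exponential there and $c(\al,\be)$ is the accompanying $n$-free algebraic factor (the square root times $\tfrac{1}{(\al A+\be B)C-(A-\al A)(B-\be B)}$).

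The key observation is that $\lambda(\al,\be)$ \emph{factors}: $\lambda(\al,\be)=\rho(\al)\,\tau(\be)\,g$, where $g=\bigl((A+B+C)^{A+B+C}C^C\bigr)^{-1}$ is a constant, $\rho(\al)=\tfrac{(A-\al A+C)^{A-\al A+C}(\al A+B)^{\al A+B}}{(\al A)^{\al A}(A-\al A)^{A-\al A}}$ depends only on $\al$, and $\tau(\be)$ (collecting the $B$-factors in the numerator and denominator of the second branch of~\eqref{eef}) depends only on $\be$; likewise the square-root part of $c(\al,\be)$ splits as an $\al$-factor times a $\be$-factor times a constant. This is precisely the row/column separation exploited in the proof of Theorem~\ref{teg}. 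Consequently $\prod_{t=1}^{k}\lambda(\al_t,\be_{\sigma(t)})=\bigl(\prod_t\rho(\al_t)\bigr)\bigl(\prod_t\tau(\be_t)\bigr)g^{k}=:\Lambda$ is the same for every permutation $\sigma$ of $\{1,\dots,k\}$, and it equals $\prod_{t=1}^{k}\lambda(\al_t,\be_t)$. Hence in the Leibniz expansion $\det(\mu_{st})=\sum_\sigma\sgn(\sigma)\prod_t\mu_{t\sigma(t)}$ every term has exponential growth rate $\Lambda^n$, so $\det(\mu_{st})=(2\pi n)^{-k}\Lambda^n\,(D+o(1))$ with $D=\sum_\sigma\sgn(\sigma)\prod_t c(\al_t,\be_{\sigma(t)})$. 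Pulling the $\al$- and $\be$-dependent pieces of $c$ out of this signed sum shows that $D$ is a nonzero multiple of the Cauchy-type determinant~\eqref{eem}, which by the Cauchy identity~\eqref{een} equals the explicit product on the last line of~\eqref{eek}; it is nonzero because the standing strict inequalities keep every denominator nonzero while the strict monotonicity $\al_1<\dots<\al_k$, $\be_1<\dots<\be_k$ keeps the numerator nonzero. Therefore $\det(\mu_{st})\sim(2\pi n)^{-k}\Lambda^n D$.

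It follows that $\det(\mu_{st})$ has exponential growth rate exactly $\Lambda$, and hence, by~\eqref{eel},
\[
\lim_{n\to\infty}\frac1n\log\frac{\M(H_{a,b,c}^{i_1,\dots,i_k;j_1,\dots,j_k})}{\M(H_{a,b,c})}=\log\Lambda=\sum_{t=1}^{k}\log\lambda(\al_t,\be_t).
\]
Since $\log\lambda(\al,\be)$ is precisely $\log$ of the quotient appearing in the corresponding summand of~\eqref{eeo}, this proves the corollary. (The same answer can alternatively be read off directly from~\eqref{eek}: dividing its logarithm by $n$ kills the prefactor $(2\pi n)^{-k}$, the $2k$ square roots, and the whole Cauchy-type quotient on its last line, leaving only the factors carried to a power linear in $n$, which recombine into $\sum_t\log\lambda(\al_t,\be_t)$.)

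The only genuine steps are the factorization $\lambda(\al,\be)=\rho(\al)\tau(\be)g$ (together with the analogous splitting of the algebraic part of $c$), which is immediate from the explicit second branch of~\eqref{eef}, and the non-vanishing of the determinant~\eqref{eem} --- and the latter is already established inside the proof of Theorem~\ref{teg}. Thus no new asymptotics beyond Theorem~\ref{ted} are needed: what remains is the routine fact that, for a $k\times k$ matrix whose $(s,t)$ entry grows like $\rho(\al_s)^n\tau(\be_t)^n g^n$ (a product of a row factor, a column factor, and a constant), the exponential growth rate of the determinant equals the product $\prod_t\rho(\al_t)\tau(\be_t)g$ of the diagonal growth rates as soon as the matrix of leading coefficients has nonzero determinant. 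I expect the minor obstacle to be purely this bookkeeping, i.e.\ tracking which pieces of $c(\al,\be)$ are row-, column-, or constant-type so that $D$ is correctly identified with a nonzero scalar times~\eqref{eem}.
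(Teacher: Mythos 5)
Your proposal is correct and follows essentially the same route as the paper: the corollary is an immediate consequence of Theorem~\ref{teg}, whose proof is exactly the combination you describe of the Lindstr\"om--Gessel--Viennot/Jacobi identity~\eqref{eel}, the second branch of~\eqref{eef}, the row/column factorization of the leading terms, and the non-vanishing Cauchy-type determinant~\eqref{eem}; taking $\tfrac1n\log$ then kills everything except the $n$-th-power factors. Your observation that $\lambda(\al_s,\be_s)$ is the quotient in the $s$-th summand of~\eqref{eeo} is right up to an evident typo in the statement, where $(A+\be_s)^{A+\be_s}$ should read $(A+\be_s B)^{A+\be_s B}$.
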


\begin{rem} \label{rem:12}
In view of the question in Remark~\ref{rem:11}, Corollary~\ref{teh} indicates that, as a result of the distortion of dimer statistics caused by the arctic ellipse phenomenon, the asymptotics of the joint correlation of the dents is determined by summing up all the interactions of the individual dents with the corners of the hexagon (see Remark~\ref{rem:10}) --- which, in this segment-crossing-ellipse case, completely swamp any pairwise interaction between the dents.

Note also that \eqref{eeo} implies that formula~\eqref{eey}, with $v$ running over the set of $2k$ dents and $F(A+B+C)+F(C)$ replaced by $kF(A+B+C)+kF(C)$, holds also in this case. This allows an elegant reformulation of~\eqref{eeo} as a ``superposition principle:'' The joint correlation of the dents at positions $\{i_1,\dotsc,i_k,j_1,\dotsc,j_k\}$ is the sum of the correlations of the $k$ pairs of dents at positions $\{i_s,j_s\}$, for $s=1,\dotsc,k$.
\end{rem}

Turning back to the question of the position of the two removed unit triangles from $H_{a,b,c}$ that make the number of tilings of the leftover region maximum, provided one of $A,B,C$ is strictly larger than the other two, the electrostatic intuition suggests that the pair of parallel sides corresponding to $\max(A,B,C)$ will have the dominant effect, and end up ``attracting'' the two removed unit triangles to their midpoints. Numerical data also overwhelmingly support this.

An interesting special case arises when $A=B=C$. Then the above intuition singles out two non-equivalent positions: dented hexagons with the dents at the middle of two adjacent sides, and dented hexagons with the dents at the middle of two opposite sides. It turns out that there is a strikingly simple relationship between them.

\begin{theo}
\label{tem}
Let $\widehat{H}_{n,n,n}^{\lfloor n/2\rfloor,\lfloor n/2\rfloor}$ be the region obtained from the hexagon $H_{n,n,n}$ by making unit dents in the middle of two opposite sides. Then we have
\begin{equation}
  \lim_{n\to\infty}\frac{\M(H_{n,n,n}^{\lfloor n/2\rfloor,\lfloor n/2\rfloor})}
      {\M(\widehat{H}_{n,n,n}^{\lfloor n/2\rfloor,\lfloor n/2\rfloor})}=2.
\label{eex}  
\end{equation}

\end{theo}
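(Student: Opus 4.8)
The plan is to compute the $n\to\infty$ asymptotics of the numerator and of the denominator in~\eqref{eex} separately, and to verify that the two asymptotic expressions differ by exactly a factor of~$2$.

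For the numerator, $H_{n,n,n}^{\lfloor n/2\rfloor,\lfloor n/2\rfloor}$ is the region $H_{a,b,c}^{i,j}$ with $a=b=c=n$ and $i=j=\lfloor n/2\rfloor$, so in the scaling of Theorem~\ref{ted} we have $A=B=C=1$ and $\al=\be=\tfrac12$. Since then $(1-\al)(1-\be)AB-(\al A+\be B)C=\tfrac14-1=-\tfrac34<0$, the segment joining the two (central) dents crosses the inscribed ellipse and the second branch of Theorem~\ref{ted} applies. Substituting these values, the exponential base simplifies to $\dfrac{(3/2)^6}{(1/2)^2\cdot 3^3}=\dfrac{27}{16}$, the square-root factor to $\sqrt3$, and the rational prefactor to $\dfrac{1}{2\pi n}\cdot\dfrac43$, so that
\begin{equation*}
\M\!\left(H_{n,n,n}^{\lfloor n/2\rfloor,\lfloor n/2\rfloor}\right)\sim\frac{2}{\sqrt{3}\,\pi n}\left(\frac{27}{16}\right)^{\!n}\M(H_{n,n,n}).
\end{equation*}

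For the denominator I would first derive a closed-form enumeration of $\M\bigl(\widehat H_{n,n,n}^{\,\lfloor n/2\rfloor,\lfloor n/2\rfloor}\bigr)$, which is not covered by the theorems above. Since $\widehat H_{a,b,c}^{\,i,j}$ (a hexagon carrying a down-pointing unit dent at position~$i$ on its top side and an up-pointing unit dent at position~$j$ on its bottom side) is simply connected, its lozenge tilings are encoded by a family of non-intersecting lattice paths whose endpoints are determined by the boundary, and the two dents simply delete one starting point and one ending point from the family associated with the undented hexagon $H_{n,n,n}$. By the Lindstr\"om--Gessel--Viennot theorem its number of tilings is therefore a minor of the binomial (MacMahon) path matrix of $H_{n,n,n}$ --- exactly as in the passage from~\eqref{ebp} to~\eqref{ebo} in the Aztec case --- which yields a single-term hypergeometric (indeed product) formula for $\M\bigl(\widehat H_{n,n,n}^{\,i,j}\bigr)/\M(H_{n,n,n})$ of the same flavour as~\eqref{eea}. (Alternatively, since $\widehat H_{n,n,n}^{\,\lfloor n/2\rfloor,\lfloor n/2\rfloor}$ is symmetric across the vertical line through its two central dents, one may apply Ciucu's factorization theorem, splitting it into two congruent halves with product-formula tiling counts, and do the same for $H_{n,n,n}$.) Feeding the resulting formula into Stirling's approximation in the form~\eqref{tdtwo} and proceeding exactly as in the proof of Lemma~\ref{tec} --- locating the dominant term, Taylor-expanding the exponent, and summing the Gaussian (or geometric) tail --- one should obtain
\begin{equation*}
\M\!\left(\widehat H_{n,n,n}^{\,\lfloor n/2\rfloor,\lfloor n/2\rfloor}\right)\sim\frac{1}{\sqrt{3}\,\pi n}\left(\frac{27}{16}\right)^{\!n}\M(H_{n,n,n}),
\end{equation*}
that is, precisely one half of the numerator's asymptotic expression. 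Note that the two exponential rates must agree: central dents on opposite sides carry the same entropy cost $\log\tfrac{27}{16}$ per site relative to $H_{n,n,n}$ as central dents on adjacent sides --- which is, in retrospect, exactly why~\eqref{eex} has a finite nonzero limit at all --- and this equality of rates, predicted by the electrostatic/heat-flow heuristic, should itself fall out of the asymptotic analysis.

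Dividing the two asymptotic expressions gives $\M\bigl(H_{n,n,n}^{\lfloor n/2\rfloor,\lfloor n/2\rfloor}\bigr)/\M\bigl(\widehat H_{n,n,n}^{\,\lfloor n/2\rfloor,\lfloor n/2\rfloor}\bigr)\to2$, as claimed. The main obstacle is the denominator: because no result in this paper treats dents on opposite sides, one must first establish the appropriate enumeration formula (via Lindstr\"om--Gessel--Viennot, or via the factorization theorem) and then push the asymptotic analysis through carefully enough to determine not merely the exponential rate $\tfrac{27}{16}$ but the exact polynomial-order constant --- since the statement hinges on that constant being exactly half of $\dfrac{2}{\sqrt{3}\,\pi n}$. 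It is prudent to cross-check all the constants by evaluating~\eqref{eex} numerically for moderate~$n$.
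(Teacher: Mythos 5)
Your treatment of the numerator is correct and matches the paper: specializing the second branch of Theorem~\ref{ted} to $A=B=C=1$, $\al=\be=\tfrac12$ does give $\M\bigl(H_{n,n,n}^{\lfloor n/2\rfloor,\lfloor n/2\rfloor}\bigr)\sim\frac{2}{\sqrt{3}\,\pi n}\left(\frac{27}{16}\right)^{n}\M(H_{n,n,n})$. The gap is the denominator, which is the actual content of the theorem and which you assert rather than derive. Your proposed route is based on a false premise: the Lindstr\"om--Gessel--Viennot minor corresponding to deleting one starting and one ending point is (up to sign and division by the full determinant) an \emph{entry of the inverse} of the path matrix, which is a hypergeometric \emph{sum}, not a product --- exactly as in the Aztec analogue, where \eqref{ebo} leads to the sum in Lemma~\ref{tcc}(a), not to a closed product. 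Indeed, the known formula for $\M(\widehat H_{a,b,c}^{\,i,j})/\M(H_{a,b,c})$ (\cite[Proposition~4]{boundarydents}) is a quotient of Pochhammer symbols multiplied by a balanced ${}_4F_3$-series; if a product formula existed, no further work would be needed. The entire difficulty of the theorem is extracting the constant in front of $(27/16)^n$ from that series, and your proposal skips it: ``one should obtain $\frac{1}{\sqrt{3}\,\pi n}(27/16)^n$'' is precisely the statement to be proved.

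The paper closes this gap as follows: it quotes \cite[Proposition~4]{boundarydents}, applies one of Sears' transformations for balanced ${}_4F_3$-series to bring the series into a form whose terms converge individually, so that dominated convergence yields $\lim_{n\to\infty}{}_4F_3=\sum_{k\ge0}3^{-k}=\tfrac32$, and then evaluates the Pochhammer prefactor by Stirling to get $(27/16)^n\,\frac{2}{3\sqrt{3}\,\pi n}\,(1+O(1/n))$; the product is $\frac{1}{\sqrt{3}\,\pi n}(27/16)^n$, whence the ratio $2$. None of these steps is routine enough to wave at: without the Sears transformation the termwise limit of the original ${}_4F_3$ does not obviously converge to anything useful, and the subexponential constant is exactly what the theorem hinges on (as you yourself note). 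Your alternative suggestion via Ciucu's factorization theorem is likewise unexecuted, and it is not clear it applies: for general $n$ the region $\widehat H_{n,n,n}^{\lfloor n/2\rfloor,\lfloor n/2\rfloor}$ with a single unit dent near the middle of each of two opposite sides need not be exactly symmetric about the candidate axis, and the hypotheses of the factorization theorem on how the axis meets the graph would have to be checked. As it stands, the proof establishes the numerator asymptotics and the correct target for the denominator, but not the denominator itself.
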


Note that by contrast, if rather than being dents on the boundary of the regular hexagon these two pairs of removed unit triangles would be in the bulk,
the ratio of the two pair correlations would be $\sqrt{2}$ (cf.\ \cite[Eq.~(6)]{ov}, as in a regular hexagon the distance between the midpoints of opposite sides is precisely 2 times the distance between midpoints of neighboring sides, and the charges of the removed triangles are $1$ and $-1$).

\begin{proof}[Proof of Theorem \ref{tem}]
According to \cite[Proposition~4]{boundarydents}, we have                      
\begin{align*}
\frac {\M(\hat H_{a,b,c}^{i,j})} {\M(H_{a,b,c})}
=\frac {(a)_b\,(c)_{j-1}\,(1+b-j)_{i-1}\,(2+a-i-j)_{i+j-2}}
  {(a+c)_b\,(i-1)!\,(j-1)!\,(1+a+c-i)_{i-1}\,(1+a+b-j)_{j-1}}
\nonumber
\\[10pt]
\times
  {} _{4} F _{3} \!\left [ \begin{matrix} { 1 - j, 1 - c - j, 1 + a + b - j, 1 - i}
\nonumber
\\
{ 1 + b - j, 2 + a - i - j, 2 - c - j }\end{matrix} ; {\displaystyle 1}\right ].
\end{align*}
To this $_4F_3$-series, we apply one of Sears' transformations for balanced
$_4F_3$-series, namely (cf.\ \cite[Eq.~(4.3.5.1)]{Slater})
\begin{align*}
&
  {} _{4} F _{3} \!\left [ \begin{matrix} { a, b, c, -n}
\nonumber
  \\
  { e, f, 1 + a + b + c - e -
  f - n}\end{matrix} ; {\displaystyle 1}\right ]
\nonumber
\\[10pt]
&\ \ \ \ \ \ \ \ 
  =
{\frac {({ \textstyle e-a)_n\,( f-a }) _{n}}  {({ \textstyle e)_n\,( f}) _{n}}}
  {} _{4} F _{3} \!\left [ \begin{matrix} { -n, a, 1 + a + c - e - f - n, 1 + a + b -
  e - f - n}
\nonumber
  \\
{ 1 + a + b + c - e - f - n, 1 + a - e - n, 1 + a - f -
    n}\end{matrix} ; {\displaystyle 1}\right ]   .
\end{align*}
where $n$ is a non-negative integer. As a result, we obtain
\begin{multline}
\frac {\M(\hat H_{a,b,c}^{i,j})} {\M(H_{a,b,c})}
=\frac {(a)_b\,(c)_{j-1}\,(1+b-j)_{i-1}\,(2+a-i-j)_{i+j-2}}
   {(a+c)_b\,(i-1)!\,(j-1)!\,(1+a+c-i)_{i-1}\,(1+a+b-j)_{j-1}}
   \\[10pt]
\times
\frac {(b)_{i-1}\,(1+a-i)_{i-1}}
{(1+b-j)_{i-1}\,(2+a-i-j)_{i-1}}
   {} _{4} F _{3} \!\left [ \begin{matrix} {1 - i, 1 - j, 1, 1 - a - b - c }
   \\
{ 2 - c - j, 2 - b - i, 1 - a }\end{matrix} ; {\displaystyle 1}\right ].
\label{eq:prod}
\end{multline}
Now we replace $a,b,c$ by~$n$, we replace $i,j$ by $n/2$, and then we
compute the asymptotics as $n\to\infty$. For the factorials and
Pochhammer symbols this is easily done using Stirling's formula.
For the $_4F_3$-series in the last line we may appeal to dominated
convergence and get
\begin{equation*} 
\lim_{n\to\infty}                                                               
{} _{4} F _{3} \!\left [ \begin{matrix} {1 - \frac {n} {2}, 1 - \frac {n} {2},         
1, 1 - 3n }\\                                                                   
{ 2 - \frac {3n} {2}, 2 - \frac {3n} {2}, 1 - n }\end{matrix} ; {\displaystyle 1}\
\right ]                                                                        
=\sum_{k=0}^\infty                                                              
\left(\frac {\left(-\frac {1} {2}\right)\cdot\left(-\frac {1} {2}\right)        
\cdot(-3)} {\left(-\frac {3} {2}\right)\cdot\left(-\frac {3} {2}\right)         
\cdot(-1)}\right)^k=\sum_{k=0}^\infty\left(\frac {1} {3}\right)^k               
=\frac {3} {2}.                                                                 
\end{equation*}
For the product in front of the $_4F_3$-series on the
right-hand side of~\eqref{eq:prod}, we obtain, after some simplification,
\begin{align*}
&
\frac {(c)_{j-1}\,(1+a-j)_{b}\,
(b)_{i-1}\,(1+a-i)_{i-1}}
  {(i-1)!\,(j-1)!\,(1+a+c-i)_{b+i-1}}
\nonumber
  \\[10pt]
&
=\frac {\Ga(c+j-1)\,\Ga(1+a+b-j)\,
\Ga(b+i-1)\,\Ga(a)\,\Ga(1+a+c-i)}
{\Ga(c)\,\Ga(1+a-j)\,\Ga(b)\,\Ga(1+a-i)\,
  \Ga(i)\,\Ga(j)\,\Ga(a+c+b)}
\nonumber
\\[10pt]
&
=\frac {\Ga^2(3n/2-1)\,\Ga^2(1+3n/2)}
{\Ga(n)\,\Ga^2(1+n/2)\,
  \Ga^2(n/2)\,\Ga(3n)}
\nonumber
\\[10pt]
&
=\left(\frac {27} {16}\right)^n
\frac {2} {3\sqrt{3}\pi n}\left(1+O\left(\frac {1} {n}\right)\right).
\end{align*}
Comparing this to what follows from Theorem~\ref{ted} when $A=B=C=1$ and $\al=\be=1/2$, one readily gets equation~\eqref{eex}. \end{proof}

\medskip\noindent
{\bf Acknowledgments.} We thank Michael Larsen, Russell Lyons and Bernhard Gittenberger for very helpful discussions concerning the scaling limit of a lattice path with fixed starting and ending points (see fact $(ii)$ in Section~2).

\newpage

\section*{Appendix: Scaling limit of lattice paths}
\setcounter{equation}{0}%
\global\def\theequation{\mbox{A.\arabic{equation}}}
\setcounter{theo}{0}%
\global\def\thetheo{\mbox{A.\arabic{theo}}}

\bigskip
\centerline{by Michael Larsen}
\smallskip
\centerline{\it Indiana University, Department of Mathematics, Bloomington, Indiana, USA}

\vskip 0.5in
Let $S=\{v_1,\ldots,v_n\}$ be a subset of $\Z^m$ such that there exists a  vector $u\in\R^n$ with $u\cdot v_i > 0$ for all $i$.  We define an \emph{$S$-path to $w\in \Z^m$} as a sequence $0=w_0, w_1,\ldots,w_N$ with $w_N=w$ and $w_i-w_{i-1} \in S$ for $1\le i\le N$.  (Our condition on $S$ guarantees that there are only finitely many $S$-paths with a given endpoint, i.e., that $N$ is bounded in terms of $w$.). An $S$-path \emph{deviates by $\epsilon>0$} if there exists $i\in [1,N]$ such that $|w_i - iw/N| > \epsilon N$.  

\begin{theo}
Given $S$ as above and $\epsilon>0$, there exists $c_1>0$  such that for all $w\in \Z^m$ with $|w|$ sufficiently large, the fraction of $S$-paths to $w$ that deviate by $\epsilon$ is less than $e^{-c_1|w|}$.
\end{theo}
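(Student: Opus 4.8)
The plan is to reduce the assertion to a concentration estimate for sampling without replacement, after conditioning on the \emph{step composition} of the path. Record an $S$-path to $w$ by its increments $X_\ell=w_\ell-w_{\ell-1}\in S$, and let $k_j$ be the number of indices $\ell$ with $X_\ell=v_j$, so that $\sum_j k_jv_j=w$ and $N=\sum_j k_j$. Two elementary inequalities pin down the size of~$N$: from $|w|\le\sum_j k_j|v_j|\le(\max_j|v_j|)N$ we get $N\ge c|w|$, and from $u\cdot w=\sum_j k_j(u\cdot v_j)\ge(\min_j u\cdot v_j)N$ together with $\min_j u\cdot v_j>0$ we get $N\le C|w|$, where $c,C>0$ depend only on~$S$. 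Hence $N$ is comparable to $|w|$, and it suffices to bound the deviating fraction by $e^{-c'N}$ for some $c'>0$: a polynomial-in-$N$ slack will be absorbed once $|w|$ (equivalently $N$) is large.

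Next I would fix a composition $(k_1,\dots,k_n)$ and put the uniform measure on the $\binom{N}{k_1,\dots,k_n}$ orderings of the multiset consisting of $k_j$ copies of $v_j$; this is exactly the uniform measure on the $S$-paths to $w$ with that composition. Under it, the partial sum $w_i=X_1+\cdots+X_i$ is a sample of size $i$ without replacement from a population of $N$ vectors with mean $w/N$, so $\mathbb{E}[w_i]=(i/N)w$. Working coordinate by coordinate, the Hoeffding inequality for sampling without replacement (no worse than the with-replacement Chernoff bound, by Hoeffding's comparison theorem) yields $\Pr\!\big(|w_i^{(\ell)}-(i/N)w^{(\ell)}|>t\big)\le 2\exp(-2t^2/(iR^2))$, with $R$ depending only on $S$. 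Taking $t$ a fixed multiple of $\epsilon N$, using $i\le N$, and union-bounding over the $m$ coordinates and over $i=1,\dots,N$, we obtain that a uniformly random $S$-path with the given composition deviates by $\epsilon$ with probability at most $2mN\exp(-c''\epsilon^2N)$, with $c''>0$ depending only on $S$ and independent of the composition.

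Since every $S$-path to $w$ has some composition and the set of all such paths is the disjoint union of the composition-blocks, the overall fraction of deviating $S$-paths is at most the same quantity $2mN\exp(-c''\epsilon^2N)$. Substituting $c|w|\le N\le C|w|$ and choosing $c_1$ slightly below $c''\epsilon^2c$ absorbs the factor $2mN\le 2mC|w|$ for $|w|$ large, which gives the theorem.

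The step requiring the most care is the concentration bound of the second paragraph: one must invoke, or reprove (for instance via Azuma--Hoeffding applied to the Doob martingale of $w_i$ under a uniformly random permutation of the increments), the fact that the standard exponential tail bounds survive sampling without replacement, and one must keep track that the deviation threshold is $\epsilon N$ and not $\epsilon\sqrt N$ --- it is the linear-in-$N$ threshold that makes the tail genuinely exponential in $N$, and hence in $|w|$. The hypothesis on $S$ is used only to force $N\asymp|w|$, converting ``exponentially small in the number of steps'' into ``exponentially small in $|w|$''.
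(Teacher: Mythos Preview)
Your argument is correct. Conditioning on the step composition $(k_1,\dots,k_n)$, the prefix $w_i$ is indeed the sum of a sample of size~$i$ drawn without replacement from the multiset of increments, and Hoeffding's 1963 comparison of without-replacement to with-replacement sampling gives exactly the coordinatewise tail bound you use; the union bound over $i$ and over coordinates, the mediant inequality over compositions, and the conversion $N\asymp|w|$ are all sound. The only sloppy phrase is ``the same quantity $2mN\exp(-c''\epsilon^2N)$'' when $N$ varies across compositions, but you immediately fix this by passing to the uniform bound $2mC|w|\exp(-c''\epsilon^2c|w|)$, which is legitimate once $|w|$ is large enough that $N\mapsto N e^{-c''\epsilon^2N}$ is decreasing on $[c|w|,\infty)$.

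The paper's proof takes a different, purely combinatorial route. It too conditions on the length~$N$ and type $(a_1,\dots,a_n)$, but instead of invoking a concentration inequality it further conditions on a time~$M$ and on the partial type $(b_1,\dots,b_n)$ of the first $M$ steps, and then exhibits an explicit ``swap'' $(b_j,b_k)\mapsto(b_j\pm L,b_k\mp L)$ that increases the multinomial count by a factor at least $(1-c_5)^{-L}$ with $L\ge c_4N$. This is completely elementary --- no black-box inequality is cited --- at the price of a longer case analysis. Your approach is shorter and more conceptual, but relies on knowing (or reproving via Azuma on the Doob martingale, as you note) the Hoeffding bound for sampling without replacement; the paper's approach trades that citation for an explicit ratio-of-multinomials calculation. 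Both arguments share the reduction to fixed composition and the mediant inequality over compositions and over~$N$.
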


\begin{proof}
We have
$$N \ge \frac{|w|}{\max_i |v_i|},$$
so it suffices to prove that, if $|w|$ is sufficiently large, then for every $N$ the fraction of $S$-paths of length $N$ to $w$
which deviate by $\epsilon$ is less than $e^{-c_2 N}$ for some fixed $c_2 > 0$ (by the mediant inequality: $a_1/A_1\leq b,\cdots,a_k/A_k\leq b$
implies $(a_1+\cdots+a_k)/(A_1+\cdots+A_k)\leq b$).
In turn, for this it suffices to prove that, if $|w|$ is sufficiently large, then for every $N$ and for each $M\in [1,N]$,
the fraction of $S$-paths to $w$ of
length $N$ such that
\begin{equation}
\label{M-deviation}
\left|w_M- \frac {Mw}N\right| > \epsilon N
\end{equation}
is less than $N^{-1}e^{-c_2 N}$ (because if a path deviates by $\epsilon$, then it deviates so for at least on ``time''~$M$).

Associated to each $S$-path of length $N$, we have an ordered $n$-tuple $(a_1,\ldots,a_n)$ of
non-negative integers with sum $N$, where $a_j$ is the number of steps of type $v_j$.  We call this
the \emph{type} of the $S$-path.  It suffices to prove that for each sufficiently large integer $N$,
each ordered $n$-tuple $(a_1,\ldots,a_n)$ 
of non-negative integers summing to $N$, and each $M\in [1,N]$, the fraction of all $S$-paths of type $(a_1,\ldots,a_n)$ 
satisfying \eqref{M-deviation} is less than $N^{-1}e^{-c_2 N}$ (use again the mediant inequality, with $k=\#(a_1,\dotsc,a_n)$'s).

Suppose $0=w_0,w_1,\ldots,w_N=w$ is an $S$-path of type $(a_1,\ldots,a_n)$ satisfying \eqref{M-deviation}.
Let $b_{i}$ denote the number of steps of type $v_i$ among the first $M$ steps.  Thus, $\sum_i b_{i} v_{i} = w_{M}$.
It follows that there exists a constant $c_3>0$ depending only on $S$ and $\epsilon$ such that for some $j\in [1,n]$, we have
\begin{equation}
\label{Mj-deviation}
\left|b_j - \frac {Ma_j}N\right| > c_3N
\end{equation}
(since
$\epsilon N<\left|w_M-\frac{M}{N}w\right|=
\left|\sum_{i=1}^n b_iv_i-\frac{M}{N}\sum_{i=1}^n a_iv_i\right|=
\left|\sum_{i=1}^n \left(b_i-\frac MN a_i\right)v_i\right|$).
  
To prove the theorem, using one more time the mediant inequality (with $k=\#(b_1,\dotsc,b_n)$'s)), we need only prove that, if $N$
is
large enough, then for each $M$ with $1\leq M\leq N$ and
all fixed $n$-tuples of non-negative integers $(a_1,\dotsc,a_n)$ with $\sum_i a_i =N$ and $(b_1,\dotsc,b_n)$ with $\sum_i b_i =M$, we have
$$\frac{M!}{b_1!\cdots b_n!}\frac{(N-M)!}{(a_1-b_1)!\cdots (a_n-b_n)!} \le \frac 1{N} e^{-c_2 N} \frac{N!}{a_1!\cdots a_n!}.$$

Now, \eqref{Mj-deviation} implies that there exists $k\in [1,n]$ such that
$$
\left|b_k - \frac {Ma_k}N\right| > N\frac{c_3}{n-1},
$$
and such that $b_k - \frac {Ma_k}N$ and $b_j - \frac {Ma_j}N$ have opposite signs
(since $\sum_{i=1}^n(b_i-Ma_i/N)=\break M-MN/N=0$).

Assuming first that $b_k > Ma_k/N$ and therefore $b_j < Ma_j/N$, we define 
$$L := \min\left(\left\lfloor \frac{Ma_j/N-b_j}2\right\rfloor,\left\lfloor \frac{b_k-Ma_k/N}2\right\rfloor\right),$$
so
$$b_j \leq \frac{M a_j}N - 2L\quad \text{and}\quad  b_k \geq \frac{M a_k}N + 2L.$$
We can bound $L$ below by an expression of the form $c_4N$ for
$c_4>0$,
where $c_4$ depends only on $S$ and $\epsilon$ (because $n$ depends only on $S$). 

For $1\le i\le L$, we have (note that $a_j-b_j\geq Ma_j/N-b_j\geq 2L$, so $a_j-b_j-L\geq L>0$)
\begin{equation*}
\begin{split}
\frac{b_j+i}{a_j-b_j-i} \leq \frac{b_j+L}{a_j-b_j-L}\le \frac{Ma_j/N - L}{(N-M)a_j/N+L} &\le \frac{Ma_j/N - c_4N}{(N-M)a_j/N+c_4N} \\
\le \frac{Ma_j/N - c_4M}{(N-M)a_j/N+c_4(N-M)}
&= \frac{a_j/N-c_4}{a_j/N+c_4} \frac M{N-M}. \\
\end{split}
\end{equation*}
Likewise,
\begin{equation*}
\begin{split}
\frac{a_k-b_k+i}{b_k-i} \leq \frac{a_k-b_k+L}{b_k-L}\le \frac{(N-M)a_k/N - L}{Ma_k/N+L} 
&\le\frac{(N-M)a_k/N - c_4(N-M)}{Ma_k/N+c_4M} \\
&= \frac{a_k/N-c_4}{a_k/N+c_4} \frac {N-M}M.\\
\end{split}
\end{equation*}
Multiplying both sides of these two inequalities, we obtain
\begin{equation}
\label{cancel}
\frac{b_j+i}{a_j-b_j-i} \frac{a_k-b_k+i}{b_k-i} < \frac{a_j/N-c_4}{a_j/N+c_4} \frac{a_k/N-c_4}{a_k/N+c_4} \leq \frac{1-c_4}{1+c_4} \frac{1-c_4}{1+c_4}< 1-c_5
\end{equation}
for $i=1,\dotsc,L$, where $c_5>0$ depends only on $S$ and $\epsilon$. 

We define $(b'_1,\ldots,b'_n)$ to be the vector $(b_1,\ldots,b_n)$ but with $b_j$ and $b_k$ replaced with
$b_j+L$ and $b_k-L$ respectively.  
Since
$$\frac{N!}{a_1!\cdots a_n!} \ge \frac{i!}{b'_1!\cdots b'_n!}\frac{(N-i)!}{(a_1-b'_1)!\cdots (a_n-b'_n)!},$$
it suffices to prove
$$\frac{i!}{b_1!\cdots b_n!}\frac{(N-i)!}{(a_1-b_1)!\cdots (a_n-b_n)!} \le \frac 1{N} e^{-c_2 N} 
 \frac{i!}{b'_1!\cdots b'_n!}\frac{(N-i)!}{(a_1-b'_1)!\cdots (a_n-b'_n)!}.$$
 For Case 1, this means
to show
\begin{equation}
\label{goal}
\frac{(b_j+1)(b_j+2)\cdots(b_j+L)}{(b_k-L+1)\cdots (b_k-1)b_k}\frac{(a_k-b_k+1)\cdots(a_k-b_k+L)}{(a_j-b_j-L+1)\cdots(a_j-b_j)} \le \frac 1{N} e^{-c_2 N}.
\end{equation}
The left hand side of \eqref{goal} can be written as
$$\frac{(b_j+L)(a_k-b_k+L)}{b_k(a_j-b_j)}\prod_{i=1}^{L-1} \frac{(b_j+i)(a_k-b_k+i)}{(b_k-i)(a_j-b_j-i)}.$$
By \eqref{cancel} and the inequalities $a_k, b_k, b_j\leq N$, $L\leq N$, $b_k\geq 2L$, $a_j-b_j\geq L$ and $L\geq c_4 N$,  this
is less than
$$\frac{2}{c_4^2} (1-c_5)^{L-1} \leq \frac{2}{c_4^2} (1-c_5)^{c_4N-1},$$
which, if $N$ is sufficiently large and $c_2$ is sufficiently small, implies \eqref{goal}.

If $b_k > ia_k/N$,  we define
$$L := \min\left(\left\lfloor \frac{b_j-Ma_j/N}2\right\rfloor,\left\lfloor \frac{Ma_k/N-b_k}2\right\rfloor\right),$$
and define $(b'_1,\ldots,b'_n)$ by replacing $b_j$ and $b_k$ with $b_j-L$ and $b_j+L$, respectively,
and the argument goes through as before.
\end{proof}

\end{document}